\renewcommand{\leq}{\leqslant}
\renewcommand{\geq}{\geqslant}
\renewcommand{\subset}{\subseteq}
\title{\vspace{-0.7cm}Trees with few leaves in tournaments}
\author{Alistair\ Benford\thanks{School of Mathematics, University of Birmingham,
Birmingham,
B15 2TT,
UK.
AXB1433@bham.ac.uk}
\ and\ Richard\ Montgomery\thanks{School of Mathematics,
University of Birmingham,
Birmingham,
B15 2TT,
UK. r.h.montgomery@bham.ac.uk. Research supported by the European Research Council (ERC) under the European Union's Horizon
2020 research and innovation programme (grant agreement no.\ 947978), and the Leverhulme Trust.}}
\date{}
\begin{document}

\maketitle

\begin{abstract}
We prove that there exists $C>0$ such that any $(n+Ck)$-vertex tournament contains a copy of every $n$-vertex oriented tree with $k$ leaves, improving the previously best known bound of $n+O(k^2)$ vertices to give a result tight up to the value of $C$. Furthermore, we show that, for each $k$, there exists $n_0$, such that, whenever $n\geqslant n_0$, any $(n+k-2)$-vertex tournament contains a copy of every $n$-vertex oriented tree with at most $k$ leaves, confirming a conjecture of Dross and Havet.
\end{abstract}

\section{Introduction}
The study of trees in tournaments has been motivated largely by Sumner's universal tournament conjecture from 1971, which states that every $(2n-2)$-vertex tournament should contain a copy of every $n$-vertex oriented tree (see, e.g.,~\cite{reid1983embedding}). In 1991, H\"aggkvist and Thomason~\cite{HAE-THO} gave the first proof that $O(n)$ vertices in a tournament are sufficient to find any $n$-vertex oriented tree. Following several subsequent improvements to the implicit constant~\cite{havet2000median,HAV,El_S}, Dross and Havet~\cite{DRO-HAV} recently showed that $\big\lceil\frac{21}{8}n-\frac{47}{16}\big\rceil$ vertices are in fact sufficient, giving the best known bound which holds for all $n$. On the other hand, Sumner's conjecture is known to be true for sufficiently large $n$, as shown in 2010 by K\"uhn, Mycroft and Osthus~\cite{KUE-MYC-OST-2}, using regularity methods.

If true, Sumner's conjecture would be tight for each $n$, as demonstrated by the $n$-vertex star with every edge oriented out from the root vertex. The appearance of many trees can, however, be ensured with far fewer than $2n-2$ vertices in the tournament. Indeed, confirming a conjecture of Rosenfeld~\cite{ROS}, Thomason~\cite{THO} showed in 1986 that there is some $n_0$ such that, whenever $n\geqslant n_0$, any $n$-vertex tournament contains a copy of every $n$-vertex oriented path. In 2000, Havet and Thomass\'e~\cite{HAV-THO} showed that the optimal value of $n_0$ is $8$, a result recently given a shorter proof by Hanna~\cite{HAN}.

Answering the natural question arising from the different behaviour here between stars and paths, H\"aggkvist and Thomason~\cite{HAE-THO} showed in 1991 that the number of additional vertices required in the tournament can be bounded as a function of the number of leaves in the tree. That is, for each $k$, there is some smallest $g(k)$ such that every $(n+g(k))$-vertex tournament contains a copy of every $n$-vertex tree with $k$ leaves. The upper bound shown by H\"aggkvist and Thomason on $g(k)$ is exponential in $k^3$, but was recently improved to $144k^2-280k+124$ by Dross and Havet~\cite{DRO-HAV}. Havet and Thomass\'e~\cite{HAV2} conjectured in 2000 that $g(k)\leq k-1$ for each $k\geq 2$. That is, generalising Sumner's conjecture, they conjectured that every $(n+k-1)$-vertex tournament contains a copy of every $n$-vertex oriented tree with $k$ leaves.

In this paper, we give the first linear bound on $g(k)$, as follows.

\begin{theorem}\label{thm:fewlinear}
There is some $C>0$ such that every $(n+Ck)$-vertex tournament contains a copy of every $n$-vertex oriented tree with $k$ leaves.
\end{theorem}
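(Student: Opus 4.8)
The plan is to cut $T$ into a number of paths bounded in terms of $k$ only and to embed these paths one at a time into the given $(n+Ck)$-vertex tournament $G$, arranging that each path costs only a constant number of surplus vertices, so that the total surplus is $O(k)$. Write $S$ for the homeomorphically reduced tree of $T$, obtained by suppressing every vertex of degree $2$. Since every non-leaf vertex of $S$ has degree at least $3$, a short count gives $|V(S)|\leq 2k-2$ and $|E(S)|\leq 2k-3$, and moreover $\sum_b \deg_T(b)=\sum_b \deg_S(b)\leq 2|E(S)|\leq 4k-6$, where both sums run over the branch vertices $b$ of $T$ (those of degree at least $3$). Each edge of $S$ is the contraction of a \emph{bare path} of $T$ --- a path all of whose internal vertices have degree $2$ in $T$ --- and $T$ is exactly the union of these at most $2k-3$ bare paths. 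Fix a leaf $\rho$ of $T$ as a root and list the bare paths as $P_0,P_1,\dots,P_r$ (so $r\leq 2k$, say) in the order a depth-first search from $\rho$ first enters them; then for each $i\geq 1$ the path $P_i$ meets $P_0\cup\dots\cup P_{i-1}$ in exactly one vertex, namely its first vertex $b_i$ with respect to $\rho$, which is a branch vertex of $T$. Hence, embedding $T$ in this order, every $P_i$ is an oriented path with only \emph{one} endpoint already placed: no path ever needs two prescribed endpoints, so no tournament ``linkage'' step is required.

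The engine is a one-path embedding lemma of a robust but essentially standard type, in the spirit of the median-order arguments of Havet--Thomass\'e and Dross--Havet. Roughly: there is an absolute constant $c$ such that, for every $\Delta$, the following holds. If $H$ is a tournament, $v\in V(H)$ has at least $\Delta$ out-neighbours and at least $\Delta$ in-neighbours in $H$, a set $Z\subseteq V(H)$ satisfies $|Z|\geq |V(H)|-\Delta$, and $Q$ is an oriented path with $|V(Q)|\leq |V(H)|-c$ and a distinguished endpoint $q$, then $Q$ embeds into $H$ with $q\mapsto v$ and the other endpoint of $Q$ sent into $Z$, using at most $c$ neighbours of $v$ in total. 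The two features that matter are that only a \emph{constant} number $c$ of vertices of $H$ are wasted per application, and that the only local requirement at the fixed endpoint $v$ is a degree bound (which we shall be free to take of order $k$). One builds the path by a greedy walk along a median order of $H$, exploiting the local domination properties of median orders to avoid getting stuck and to steer the free endpoint into $Z$.

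Now embed $T$ greedily in the order $P_0,\dots,P_r$, tracking the unused set $U\subseteq V(G)$. As $|V(T)|=n$ and each of the $r+1=O(k)$ embeddings wastes at most $c$ vertices, we keep $|U|\geq Ck-O(k)>100k$ throughout if $C$ is large. The invariant is that every branch vertex $b$ is embedded on a vertex $g_b$ with many unused neighbours of each orientation. To maintain it: whenever $g_b$ is first placed --- it is placed as the free endpoint of some earlier $P_j$, so by the lemma it may be sent into any prescribed set of size $\geq |U|-\Delta$, and we take this set to be $Z_b:=\{w\in U: w$ has at least $20k$ unused out-neighbours and at least $20k$ unused in-neighbours$\}$, which has size $\geq|U|-O(k)$ by the standard tournament degree count --- we immediately \emph{reserve} a set $R_b$ of $\deg_T(b)$ unused out-neighbours and $\deg_T(b)$ unused in-neighbours of $g_b$, themselves chosen to have large unused degree (possible since $\deg_T(b)\leq 2k$ while $g_b$ has at least $20k$ unused neighbours of each kind and only $O(k)$ vertices of $U$ have small unused degree). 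These reserved vertices are withheld from all later path-embeddings except those of the bare paths $P_i$ with $b_i=b$, which may consume them: when such a $P_i$ is embedded, its second vertex is drawn from the appropriate part of $R_b$ according to the orientation of the first edge of $P_i$, and the rest of $P_i$ is inserted by one further application of the lemma. Since $\sum_b 2\deg_T(b)=O(k)$, at most $O(k)$ vertices are ever reserved, so the total surplus stays $O(k)$; taking $C$ larger than all accumulated constants completes the embedding of $T$ and proves Theorem~\ref{thm:fewlinear}.

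The main obstacle is exactly the interaction just described: the one-path lemma and the reservation scheme must be arranged so that the \emph{total} overhead is $O(k)$ rather than $O(k^2)$ --- equivalently, so that each branch vertex, of which there are $\Theta(k)$, costs only $O(\deg_T(b))$ and not $\Omega(k)$ extra vertices. This is precisely what separates a linear bound from the previously known $n+O(k^2)$: the naive way to keep a branch image supplied with usable neighbours of both orientations burns a linear amount of room per branch vertex, so one must instead charge the reservation at $b$ against $\deg_T(b)$ and invoke $\sum_b\deg_T(b)=O(k)$, while the one-path lemma must be strong enough (prescribed endpoint of controlled degree, free endpoint steerable into a large set, constant waste, constant consumption of the prescribed endpoint's neighbourhood) for these local costs to be affordable. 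Formulating a clean lemma of this strength, and checking that the greedy process never violates its hypotheses --- in particular that the target sets $Z_b$ and the reservoirs $R_b$ can always be located inside the current unused tournament --- is the technical core of the argument.
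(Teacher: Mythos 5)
There is a genuine gap, and it sits at the heart of your argument: the ``one-path embedding lemma'' you assume is false. You ask to embed an oriented path $Q$ (which may be a long \emph{directed} path, e.g.\ of length $\geqslant (n-1)/(2k-3)$ by Proposition~\ref{prop:deg3}) into a tournament $H$ with $|V(H)|\leqslant |V(Q)|+c$, starting at a prescribed vertex $v$ whose only guaranteed property is that it has $\Delta=O(k)$ unused in- and out-neighbours. Take $H$ to be a transitive tournament and place $v$ so that it has exactly $\Delta$ out-neighbours (and many in-neighbours). Every directed path from $v$ is confined to $\{v\}\cup N^+(v)\cup N^+(N^+(v))\cup\ldots$, i.e.\ to the last $\Delta+1$ vertices of the transitive order, so no directed path of length greater than $\Delta$ starts at $v$, no matter how large $c$ is. Since $\Delta=O(k)$ while the bare paths of $T$ can have length $\Theta(n/k)$, no local degree condition at the prescribed endpoint can make your lemma true; what matters in the (near-)transitive case is the global \emph{position} of $v$ in the order. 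Consequently your reservation scheme, which only controls $O(\deg_T(b))$ neighbours of each branch image, cannot get the long bare paths started, and the weak steering of the free endpoint into a set $Z$ of size $|V(H)|-\Delta$ gives you no control over where that endpoint lands for the next attachment.

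This is precisely the difficulty the paper's proof is organised around. It first reduces to trees whose bare paths are all directed (non-directed long paths are the easy case, handled by Thomason's theorems with one or two spare vertices), then removes the $O(k)$ long directed bare paths to leave a forest of total size $O(k)$. Proposition~\ref{prop:treepart} orders the resulting small subtrees into classes so that every long path runs from one class to the next; the classes are embedded into consecutive intervals of a median order, and the long directed paths are then embedded with \emph{both} endpoints prescribed by Lemma~\ref{lm:many_connecting_paths}, which achieves $O(1)$ waste per path by running the Dross--Havet arborescence procedure and finishing each path with a length-$3$ connection (Lemma~\ref{lm:connecting_path_length_3}). Your proposal has no substitute for this global coordination of where the branch vertices sit in the order, nor for a two-endpoint path-embedding tool with constant waste; without both, the argument does not go through.
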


If true, Havet and Thomass\'e's conjecture would be tight whenever $k=n-1$ (i.e., whenever it is covered by Sumner's conjecture), but for general $n$ and $k$, we only have examples showing that the tournament may need to have at least $n+k-2$ vertices (as described below). From the result of Havet and Thomass\'e~\cite{HAV-THO} on oriented paths we know that $n+k-2$ is best possible if $k=2$ and $n\geq 8$, while Ceroi and Havet~\cite{CER-HAV} proved that $n+k-2$ is also best possible if $k=3$ and $n\geq 5$. Dross and Havet~\cite{DRO-HAV} conjectured that, for each $k$, if $n$ is sufficiently large then $n+k-2$ is best possible.

In this paper, we confirm this conjecture, as follows.

\begin{theorem}\label{thm:fewcst}
For each $k$, there is some $n_0$ such that, for each $n\geqslant n_0$, every $(n+k-2)$-vertex tournament contains a copy of every $n$-vertex oriented tree with $k$ leaves.
\end{theorem}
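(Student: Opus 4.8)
The plan is to exploit the fact that, once $n$ is large compared with $k$, every $n$-vertex tree with $k$ leaves is essentially a long path carrying a bounded amount of extra structure; the long bare paths that must therefore be present are very flexible to embed in a tournament, and this flexibility is what should let us bring the surplus down from the $Ck$ of Theorem~\ref{thm:fewlinear} to exactly $k-2$ (with Theorem~\ref{thm:fewlinear} still available to handle bounded-size sub-configurations if needed).

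\textbf{Structure of the tree.} An $n$-vertex tree $T$ with $k$ leaves has at most $k-2$ vertices of degree at least $3$, so deleting its leaves and these branch vertices leaves a union of at most $2k-3$ internally disjoint \emph{bare paths} (paths whose internal vertices all have degree $2$ in $T$). Contracting each bare path to an edge yields a fixed ``shape'' $R$ on at most $2k-2$ vertices, and $T$ is recovered from $R$ by subdividing its edges into paths whose lengths sum to roughly $n$. Fix a large constant $L=L(k)$: call a bare path \emph{short} if it has length at most $L$ and \emph{long} otherwise, and let the \emph{core} $T_0$ consist of $R$ together with all short bare paths and the first and last $L$ vertices of each long bare path, so that $|V(T_0)|$ is bounded in terms of $k$ while the long bare paths, with these stubs removed, still account for all but a bounded number of vertices of $T$. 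Taking $n_0$ large forces at least one bare path to have length at least $n/(3k)$; this is the source of the flexibility we exploit.

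\textbf{Embedding scheme.} Let $G$ be a tournament on $N=n+k-2$ vertices. First locate a bounded-size ``rich'' subset $W\subseteq V(G)$ into which $T_0$ embeds with prescribed control over the images of the (at most $2k-2$) endpoints of the long bare paths; for instance a transitive subtournament of $G$, or a configuration supplied by Theorem~\ref{thm:fewlinear}, is large and flexible enough to allow such an embedding in which, moreover, each designated image has large in-degree and out-degree into $G-W$ (equivalently, is suitably placed in a median order of $G-W$). It then remains to route the long bare paths through $G-W$: for each long bare path, with prescribed orientation pattern $\pi_i$ and length $\ell_i$, we seek internally disjoint oriented paths in $G-W$ realising the $\pi_i$, joining the correct pairs of designated images, and together covering all but exactly $k-2$ vertices. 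Such a system of paths, together with the embedding of $T_0$, is the desired copy of $T$. The deficiency $k-2$ is of course forced by $N-n=k-2$; the point is that the scheme can be arranged so that all of this deficiency is incurred at the core --- at most one unused vertex per branch vertex of $T$, of which there are at most $k-2$ --- while the routing step covers its part of $G$ exactly, mirroring both the extremal constructions and the case $k=2$, which is the theorem of Havet and Thomass\'e.

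\textbf{Main obstacle.} The crux is a connecting lemma for oriented paths in tournaments with \emph{exact} control of the number of vertices covered. Routing one long oriented path between two prescribed vertices so as to use almost all available vertices is comparatively routine via median orders, but pinning the deficiency down exactly, uniformly over all orientation patterns, is the difficulty: the genuinely hard case is a long \emph{directed} sub-path, since reachability in a tournament may be severely limited (as in a transitive tournament), so such a sub-path has to be threaded first through vertices of large out-degree and then through vertices of large in-degree, which eats into the room available for the remaining paths. Overcoming this seems to require a robust splitting of each orientation pattern into ``balanced'' and ``directed'' segments, a part of the embedding whose length can be adjusted up or down by a bounded number of vertices (so the total deficiency can be tuned to exactly $k-2$), and $n_0$ chosen large enough that the length at least $n/(3k)$ of the longest bare path dominates all the bounded error terms. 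A cleaner alternative organisation would be induction on $k$, with the small cases $k\le 3$ already known: strip a pendant bare path ending at a branch vertex, apply the statement for $k-1$ leaves, and regrow the path; this works provided the inductive hypothesis is strengthened to guarantee that a prescribed leaf image can be chosen with large in- and out-degree into the unused vertices, which reduces the regrowth step to the same connecting lemma.
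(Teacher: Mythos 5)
Your sketch correctly identifies the overall shape of the problem (a tree with $k$ leaves is a bounded ``shape'' whose edges are subdivided into bare paths, one of which is very long, and the difficulty is routing long \emph{directed} bare paths so that they cover a prescribed set of vertices exactly), but the step you yourself label as ``the crux'' --- an exact-covering connecting lemma for directed paths --- is precisely the content of the proof and is left unresolved. Saying that it ``seems to require a robust splitting'' and ``a part of the embedding whose length can be adjusted up or down by a bounded number of vertices'' names the desideratum without supplying a mechanism. The paper's mechanism is an absorption argument inside a median order: for each interval $U_i$ reserved for a long directed subpath, one first finds a short directed path $R_i$ between the endpoints of the interval such that every vertex of $U_i$ has an in-neighbour on $R_i$ preceding an out-neighbour on $R_i$ (Claim~\ref{clm:Rismall}, built from a small random subset of $U_i$ plus buffer zones); in a tournament such a vertex can always be inserted into $R_i$, so after all the small attached forests and short connecting paths are embedded, whatever vertices remain in $U_i$ can be swallowed exactly, one by one. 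Without this (or an equivalent) your routing step does not close.

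Two further points of divergence. First, your accounting of where the deficiency $k-2$ is incurred --- ``at most one unused vertex per branch vertex of $T$'' --- does not match what the problem forces. The extremal example $T_{n,k}$ (a directed path with $k-1$ out-leaves attached at its end) shows the surplus is consumed at the terminal out-arborescence, not spread over branch vertices; correspondingly, the paper writes $T$ inside a digraph built from a long directed path running from the root of a maximal in-arborescence $S_1$ (with $k_0$ in-leaves) to the root of a maximal out-arborescence $S_{r+1}$ (with $k_1$ out-leaves), and the entire surplus $k_0+k_1-2\leq k-2$ is spent embedding $S_1$ and $S_{r+1}$ via the Dross--Havet arborescence theorem (Theorem~\ref{thm:DH}); every other piece is embedded with zero waste. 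Second, you do not exploit the one genuinely easy escape route: if some long bare path is \emph{not} directed (more precisely, has first and last blocks of length $1$ and degree-$2$ endvertices), then Thomason's theorems (Theorems~\ref{thm:appending_non-directed_path} and~\ref{thm:connecting_non-directed_paths}) embed it between prescribed vertex pairs with only $2$ spare vertices, so one deletes it, applies Theorem~\ref{thm:fewlinear} to the rest, and regrows it; the hard directed-spine analysis is only needed in the complementary case. Your inductive alternative (strip a pendant bare path, apply the case $k-1$) reduces to the same unproved connecting lemma and additionally risks losing the long bare path needed for flexibility, so it does not repair the gap.
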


The following well-known example shows that this is best possible. Form a tree $T_{n,k}$ by taking a directed path $P$ with $n-k+1$ vertices and attaching $k-1$ out-leaves to the last vertex of $P$. The resulting oriented tree $T_{n,k}$ has $n$ vertices and $k$ leaves. Construct the following $(n+k-3)$-vertex tournament $G$. Let $V(G)=A\cup B$, where $A$ and $B$ are disjoint sets with $|A|=n-k$ and $|B|=2k-3$. Orient the edges of $G$ so that $G[B]$ is a regular tournament, $G[A]$ is an arbitrary tournament, and all edges are directed from $A$ to $B$. As $d_G^+(v)=k-2$ for each $v\in B$, if $G$ contains a copy of $T_{n,k}$ then the last vertex of $P$ must be copied to $A$. Then, as every edge between $A$ and $B$ is oriented into $B$, every vertex of $P$ must be copied into $A$, a contradiction as $|A|=n-k$. Thus the $n$-vertex tree $T_{n,k}$ with $k$ leaves does not appear in the $(n+k-3)$-vertex tournament $G$.

To prove Theorems~\ref{thm:fewlinear} and~\ref{thm:fewcst}, we use median orders, a technique first used to embed trees in tournaments by Havet and Thomass\'e~\cite{havet2000median}. In particular, we exploit the property that pairs of vertices in a median order can be robustly connected by directed paths with length 3 travelling in the direction of the order (see Lemma~\ref{lm:connecting_path_length_3}), using this repeatedly in our embeddings. We have not optimised the value of $C$ reachable with our methods as this will not reach a plausibly optimal bound, but we show that Theorem~\ref{thm:fewlinear} holds for some $C<500$. We do not calculate an explicit function $n_0(k)$ for Theorem~\ref{thm:fewcst}, but our methods show that we may take $n_0(k)=k^{O(k)}$. However, it seems likely some function $n_0(k)$ satisfying Theorem~\ref{thm:fewcst} with $n_0(k)=O(k)$ exists.

After stating our notation, in Section~\ref{sec:prelim}, we recall median orders and their basic properties, before proving Lemma~\ref{lm:connecting_path_length_3} and giving the other preliminary results we will use. In Section~\ref{sect:fewlinear}, after sketching its proof, we prove Theorem~\ref{thm:fewlinear}. In Section~\ref{sect:fewcst}, we give a proof sketch before proving Theorem~\ref{thm:fewcst}.

\section{Preliminaries}\label{sec:prelim}

\subsection{Notation}
For a directed graph (digraph) $G$, we use $V(G)$ to denote the vertex set of $G$ and $E(G)$ to denote the edge set of $G$. We write $|G|=|V(G)|$ for the order of $G$. Each element of $E(G)$ is an ordered pair $(u,v)$, where $u,v\in V(G)$, which we write as $uv$. If $uv\in E(G)$, then we say that \emph{$u$ dominates $v$} (written $u\rightarrow_G v$), that $v$ is an \emph{out-neighbour} of $u$, and that $u$ is an \emph{in-neighbour} of $v$. Given $v\in V(G)$, the \emph{out-neighbourhood} of $v$, written $N_G^+(v)$, is the set of out-neighbours of $v$ in $V(G)$, and the \emph{in-neighbourhood} of $v$, written $N_G^-(v)$ is the set of in-neighbours of $v$ in $V(G)$. Throughout, we use $+$ and $-$ interchangeably with `in' and `out' respectively. For each $\diamond\in\{+,-\}$, the \emph{$\diamond$-degree} of $v$ in $G$ is $d_G^\diamond(v)=|N_G^\diamond(v)|$. For $X,Y\subseteq V(G)$ and $\diamond\in\{+,-\}$, we write $N_G^\diamond(X)=(\cup_{v\in X}N_G^\diamond(v))\setminus X$ and $N_G^\diamond(X,Y)=N_G^\diamond(X)\cap Y$. We denote by $G[X]$ the induced sub-digraph of $G$ with vertex set $X$ and let $G-X=G[V(G)\setminus X]$. Subscripts are omitted wherever they are clear from context, as are rounding signs wherever they are not crucial.

An \emph{oriented graph} is a digraph with at most one edge between any pair of vertices. A \emph{tournament} $G$ is a digraph whose underlying graph is a complete graph, i.e., for each $u,v\in V(G)$ with $u\neq v$, exactly one of $uv$ or $vu$ is in $E(G)$. An \emph{oriented tree} (respectively, \emph{oriented path}) is a digraph whose underlying graph is a tree (respectively, path). A \emph{directed path} from $v_0$ to $v_\ell$ is a path of the form $v_0\rightarrow v_1\rightarrow \ldots\rightarrow v_\ell$. The \emph{length} of a path $P$ is $|P|-1$, and denoted $\ell(P)$.  We say a subpath $P$ of a forest $T$ is a \emph{bare path} if all of the internal vertices $v$ of $P$ have $d_T(v)=2$, and we denote by $T-P$ the digraph formed from $T$ by removing all the edges and internal vertices of $P$.

Having proved, for example, a result holds for $\diamond=+$, we will occasionally deduce the same result for $\diamond=-$ by \emph{directional duality}. That is, reversing all the relevant orientations and applying the result with $\diamond=+$ implies, after reversing the edges again, the result with $\diamond=-$.
We also use standard hierarchy notation. That is, for $a,b\in(0,1]$, we write $a\ll b$ to mean that there is a non-decreasing function $f:(0,1]\to(0,1]$ such that the subsequent statement holds whenever $a\leqslant f(b)$.

\subsection{Median orders}
Median orders were first used to embed trees in tournaments by Havet and Thomass\'e~\cite{havet2000median}. Given a tournament $G$, an ordering $\sigma =v_1,\ldots,v_n$ of $V(G)$ is a \emph{median order} if it maximises the number of pairs $i<j$ with $v_iv_j\in E(G)$. The following lemma gives two simple fundamental properties of median orders (see, e.g., \cite[Lemma 9]{DRO-HAV}).

\begin{lemma}\label{lm:basic_properties_of_median_orders}
Let $G$ be a tournament and $v_1,\ldots,v_n$ a median order of $G$. Then, for any two indices $i$, $j$ with $1\leqslant i<j\leqslant n$, the following properties hold.
\begin{enumerate}[label = \textbf{\roman{enumi})}]
    \item\label{median1} $v_i,v_{i+1},\ldots,v_j$ is a median order of the induced subtournament $G[\{v_i,v_{i+1},\ldots,v_j\}]$.
    \item\label{median2} $v_i$ dominates at least half of the vertices $v_{i+1},v_{i+2},\ldots,v_j$, and $v_j$ is dominated by at least half of the vertices $v_i,v_{i+1},\ldots,v_{j-1}$. In particular, each vertex $v_i$, $1\leqslant i<n$, dominates its successor $v_{i+1}$.
\end{enumerate}
\end{lemma}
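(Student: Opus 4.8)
The plan is to argue both parts by a local-exchange argument: a median order maximises the number of forward edges, so any rearrangement of the order can only decrease (or preserve) this count, and comparing the two counts yields the claimed inequalities.

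For part \ref{median1}, I would argue by contradiction. Suppose $v_i,\ldots,v_j$ is not a median order of $G[\{v_i,\ldots,v_j\}]$, so there is an ordering $w_i,\ldots,w_j$ of $\{v_i,\ldots,v_j\}$ with strictly more forward edges inside this set. Then replace the block $v_i,\ldots,v_j$ in $\sigma$ by $w_i,\ldots,w_j$, keeping all other vertices fixed. The number of forward edges with both endpoints outside the block is unchanged, the number with exactly one endpoint in the block is unchanged (since for a vertex $v_\ell$ with $\ell<i$ or $\ell>j$, the set of block-vertices on each side of $v_\ell$ is the same before and after), and the number with both endpoints in the block strictly increases. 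This contradicts the maximality of $\sigma$, proving \ref{median1}.

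For part \ref{median2}, by \ref{median1} it suffices to prove the statement for $i=1$, $j=n$: that $v_1$ dominates at least half of $v_2,\ldots,v_n$ (the statement for $v_j$ then follows by directional duality, reversing all edges). Suppose $v_1$ dominates fewer than half of $v_2,\ldots,v_n$. Consider moving $v_1$ to the end of the order, giving $v_2,\ldots,v_n,v_1$. The only edges whose forward/backward status changes are those incident to $v_1$: an edge $v_1v_\ell$ that was forward becomes backward, and an edge $v_\ell v_1$ that was backward becomes forward. So the change in the number of forward edges is $d^-_{G}(v_1)\text{ within } \{v_2,\dots,v_n\} - d^+_{G}(v_1)\text{ within } \{v_2,\dots,v_n\}$, which is positive by our assumption, again contradicting maximality. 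Hence $v_1$ dominates at least half of the remaining vertices. Applying this with the pair $i,i+1$ (again via \ref{median1}) shows $v_i$ dominates at least half of the one-element set $\{v_{i+1}\}$, i.e.\ $v_i\rightarrow v_{i+1}$, giving the final sentence.

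The only mild subtlety—and the step to state carefully rather than a genuine obstacle—is the bookkeeping in \ref{median1} that edges with exactly one endpoint in the moved block do not change status; this is immediate once one notes that such a block move does not alter which side of any fixed outside vertex the block lies on. Everything else is a direct comparison of forward-edge counts, so there is no real difficulty here.
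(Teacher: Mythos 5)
Your proof is correct, and it is exactly the standard exchange argument for these properties: the paper does not prove this lemma itself but cites it from the literature (Dross--Havet, Lemma~9), where the same block-replacement argument gives \ref{median1} and the same move-to-the-end comparison of forward-edge counts gives \ref{median2}. Nothing to add.
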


Median orders contain short directed paths from any vertex to any vertex later in the order, as follows (in combination with Lemma~\ref{lm:basic_properties_of_median_orders} i)).

\begin{corollary}\label{cor:last} Let $n\geq 2$.
If $v_1,\ldots,v_n$ is a median order of the $n$-vertex tournament $G$, then $G$ contains a directed path from $v_1$ to $v_n$ with length at most 2.
\end{corollary}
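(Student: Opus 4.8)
The plan is to use Lemma~\ref{lm:basic_properties_of_median_orders} ii) with $i=1$ and $j=n$, together with a short pigeonhole argument. First, if $v_1v_n\in E(G)$, then $v_1\to_G v_n$ is already a directed path of length $1$; and if $n=2$, then $v_1\to_G v_2$ by the ``in particular'' clause of Lemma~\ref{lm:basic_properties_of_median_orders} ii). So I may assume $n\geq 3$ and $v_nv_1\in E(G)$, and it suffices to produce a directed path of length $2$ from $v_1$ to $v_n$.

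Under this assumption, I would show that $N_G^+(v_1)\cap N_G^-(v_n)\neq\emptyset$; any vertex $v_i$ in this intersection then yields the required path $v_1\to_G v_i\to_G v_n$. By Lemma~\ref{lm:basic_properties_of_median_orders} ii), $v_1$ dominates at least half of $v_2,\ldots,v_n$, i.e.\ at least $(n-1)/2$ of these $n-1$ vertices; since $v_nv_1\in E(G)$ we have $v_n\notin N_G^+(v_1)$, so in fact $|N_G^+(v_1)\cap\{v_2,\ldots,v_{n-1}\}|\geq (n-1)/2$. Symmetrically, $v_n$ is dominated by at least half of $v_1,\ldots,v_{n-1}$, and since $v_1\notin N_G^-(v_n)$ this gives $|N_G^-(v_n)\cap\{v_2,\ldots,v_{n-1}\}|\geq (n-1)/2$. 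These are two subsets of $\{v_2,\ldots,v_{n-1}\}$, a set of size $n-2$, and $(n-1)/2+(n-1)/2=n-1>n-2$, so they must share a vertex, as desired.

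There is essentially no obstacle: the entire content is the counting step above, and the only points requiring (minor) care are the degenerate cases $n=2$ and $v_1v_n\in E(G)$, and correctly excluding $v_n$ from $N_G^+(v_1)$ and $v_1$ from $N_G^-(v_n)$ so that both size bounds genuinely hold within the $(n-2)$-element set $\{v_2,\ldots,v_{n-1}\}$ — this exclusion is exactly what forces the two counts to overlap. Equivalently, one could argue by contradiction: if no such $v_i$ existed then $N_G^+(v_1)\cap\{v_2,\ldots,v_{n-1}\}\subseteq N_G^+(v_n)$, so $d_G^+(v_n)\geq 1+(n-1)/2$ (counting also $v_1$), and combined with $|N_G^-(v_n)\cap\{v_2,\ldots,v_{n-1}\}|\geq (n-1)/2$ this exhibits at least $n$ vertices distinct from $v_n$, a contradiction.
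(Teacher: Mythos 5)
Your proof is correct and is essentially identical to the paper's: both reduce to the case $v_1v_n\notin E(G)$, apply Lemma~\ref{lm:basic_properties_of_median_orders}~ii) to show that $N^+_G(v_1)$ and $N^-_G(v_n)$ each occupy more than half of $\{v_2,\ldots,v_{n-1}\}$, and conclude by pigeonhole that they intersect. The extra care you take with the $n=2$ case and with excluding $v_n$ from $N^+_G(v_1)$ (and $v_1$ from $N^-_G(v_n)$) is exactly the point the paper's one-line computation also relies on.
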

\begin{proof} Suppose $v_1v_n\notin E(G)$, for otherwise such a path exists, and let $V=\{v_2,\ldots,v_{n-1}\}$. Then, by Lemma~\ref{lm:basic_properties_of_median_orders}~ii), $|N^+(v_1,V)|=|N^+(v_1)|\geq \frac{n-1}{2}>|V|/2$, and, similarly, $|N^-(v_n,V)|>|V|/2$. Therefore, there is some $w\in V$ such that $v_1wv_n$ is a directed path.
\end{proof}

Median orders have been used particularly effectively to embed arborescences in tournaments. An \emph{out-arborescence} (respectively, \emph{in-arborescence}) is an oriented tree $T$ with a root vertex $t\in V(T)$ such that, for every $v\in V(T)$, the path between $t$ and $v$ in $T$ is directed from $t$ to $v$ (respectively, from $v$ to $t$). Dross and Havet \cite{DRO-HAV} used median orders to prove that any $(n+k-1)$-vertex tournament contains a copy of any $n$-vertex arborescence with $k$ leaves. We will use their result in the following slightly stronger form (see~\cite[Theorem 12]{DRO-HAV}).
 
\begin{theorem}\label{thm:DH}
Let $A$ be an $n$-vertex out-arborescence with $k\geqslant 1$ out-leaves and root $r$. Let $G$ be a tournament on $n+k-1$ vertices and let $\sigma=v_1,\ldots,v_{n+k-1}$ be a median order of $G$. Then, there is an embedding $\phi$ of $A$ in $G$ such that $\phi(r)=v_1$.
\end{theorem}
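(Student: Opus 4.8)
I would prove Theorem~\ref{thm:DH} by induction on $n$, carrying a strengthened hypothesis: not only can one insist that $\phi(r)=v_1$, but one may also prescribe an arbitrary out-leaf $\ell^*$ of $A$ and additionally insist that $\phi(\ell^*)=v_{n+k-1}$ and that $\phi$ of the parent of $\ell^*$ is $v_{n+k-2}$ (treating the degenerate cases $n\leq2$, and $\ell^*$ a child of $r$, separately). The base case $n=1$ is immediate. If $k=1$ then $A$ is a directed path on $n$ vertices and $G$ has exactly $n$ vertices, so $v_1,\ldots,v_n$ is the required embedding, being a directed Hamiltonian path of $G$ by Lemma~\ref{lm:basic_properties_of_median_orders}~ii). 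So assume $k\geq2$, fix a prescribed out-leaf $\ell^*$, and let $q$ be the closest ancestor of $\ell^*$ that is either $r$ or has out-degree at least $2$ in $A$; since $k\geq2$ such a $q$ exists. Write the pendant path from $q$ to $\ell^*$ as $q=u_0\to u_1\to\cdots\to u_m=\ell^*$, all of whose internal vertices have degree $2$ in $A$.

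If $m\geq2$, I would delete $\ell^*$: the tree $A-\ell^*$ has $n-1$ vertices and $k$ out-leaves (now including $u_{m-1}$), hence needs a tournament on $(n-1)+k-1=(n+k-1)-1$ vertices. Applying the induction hypothesis to $A-\ell^*$ inside $G[\{v_1,\ldots,v_{n+k-2}\}]$ (a median order by Lemma~\ref{lm:basic_properties_of_median_orders}~i)) with prescribed leaf $u_{m-1}$ yields $\phi$ with $\phi(u_{m-1})=v_{n+k-2}$; then setting $\phi(\ell^*):=v_{n+k-1}$ extends the embedding, since $v_{n+k-2}$ dominates $v_{n+k-1}$ by Lemma~\ref{lm:basic_properties_of_median_orders}~ii) and $v_{n+k-1}$ was unused. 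This also re-establishes the strengthened hypothesis for $A$.

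The case $m=1$, where $\ell^*$ hangs directly below the branching vertex $q$, is the substantive one. Here I would delete from $A$ the entire out-forest $F$ consisting of the subtrees hanging below $q$ (one of whose components is the single vertex $\ell^*$); the resulting tree $A'$ has $q$ as a new out-leaf, and embedding $A'$ into an initial segment of $\sigma$ with $q$ prescribed places $\phi(q)$ at the last vertex $v_M$ of that segment. It then remains to hang $F$, component by component, inside the contiguous tail $v_{M+1},\ldots,v_{n+k-1}$ (again a median order), so that every component root is dominated by $v_M=\phi(q)$ and the component $\{\ell^*\}$ lands on the final vertex. For this I would prove a companion statement about embedding an out-forest with $c$ components, $n_F$ vertices and $k_F$ out-leaves below a fixed vertex $v_M$ inside $n_F+k_F-1$ consecutive later vertices — which is $c-1$ more than the $n_F+k_F-c$ one expects — processing the components left to right, attaching the first for free via $v_M\to v_{M+1}$ and using Corollary~\ref{cor:last} to join $v_M$ to each later component by a directed path of length at most $2$ through a currently-unused vertex, so that the $c-1$ surplus vertices are exactly consumed by these connecting paths.

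The main obstacle is precisely this $m=1$ case and the tight accounting it demands: one must verify that the median-order segments handed to the subproblems have exactly the right lengths, that each connecting path can always be routed through a fresh vertex without colliding with the partial embedding (in particular so that $\{\ell^*\}$ really does reach the last vertex), and that the strengthened hypothesis is self-propagating through both branches of the recursion and through the mutual recursion with the forest statement, while never over-spending the $k-1$ surplus vertices. Each individual gluing step is forced to work by the two parts of Lemma~\ref{lm:basic_properties_of_median_orders} together with Corollary~\ref{cor:last}; the real content is arranging the induction so that these local moves compose into a global embedding with no waste beyond what is allowed.
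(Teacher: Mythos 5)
There is a genuine gap here --- in fact two. First, the strengthened induction hypothesis is false. Take $A$ to be the tree with edges $r\to q$, $q\to \ell_1$, $q\to\ell_2$ (so $n=4$, $k=2$), and let $G$ be the $5$-vertex tournament in which $v_iv_j\in E(G)$ for all $i<j$ except that $v_4\to v_1$. This tournament contains the cycle $v_1\to v_2\to v_4\to v_1$, so it is not transitive, and hence the order $v_1,\ldots,v_5$, having only one backward edge, is a median order. Your hypothesis with $\ell^*=\ell_2$ demands $\phi(r)=v_1$, $\phi(\ell_2)=v_5$ and $\phi(q)=v_4$, which requires the edge $v_1\to v_4$ --- absent. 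Excluding only the case ``$\ell^*$ a child of $r$'' does not avoid this: your $m\geq 2$ recursion bottoms out precisely at a prescribed leaf that is a child of a branching vertex $q\neq r$, which is the configuration above. (The underlying obstruction, that $\phi(q)$ need not dominate the last vertex of the order, is also exactly what your forest companion statement would have to overcome to land $\{\ell^*\}$ on the final vertex.)

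Second, the mechanism for hanging the forest $F$ in the $m=1$ case is incoherent. Each component root of $F$ is a child of $q$ in $A$, so its image must be an out-neighbour of $\phi(q)=v_M$; you cannot route the edge from $q$ to that child through ``a directed path of length at most $2$'' without embedding a subdivision of $A$ rather than $A$ itself, yet your budget accounting relies on consuming the intermediate vertices of such paths. (Corollary~\ref{cor:last} also only connects the first and last vertex of a median order and gives no means of avoiding already-used intermediate vertices.) For what it is worth, the paper does not prove this statement but quotes it from Dross and Havet; their argument is not an induction of this shape but a single greedy pass along the median order, assigning children of $a_j$ to the first unhit out-neighbours of $v_j$, followed by an interval-counting argument showing that at most $k-1$ vertices are ever skipped --- the same style of argument as Claim~\ref{clm:sizeIiF} in Section~\ref{subsec:parallel_directed_paths}.
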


We will need some linear bound on the number of vertices required in a tournament, which we then apply for small trees. Any linear bound would suffice, but with the value of $C$ in mind we derive Corollary~\ref{cor:DHbound} from the following theorem of Dross and Havet~\cite{DRO-HAV}.

\begin{theorem}\label{thm:DHbound} For each $n\geq 2$, every $\big\lceil \frac{3}{2}n+\frac{3}{2}k-2\big\rceil$-vertex tournament contains a copy of every $n$-vertex oriented tree with $k$ leaves.
\end{theorem}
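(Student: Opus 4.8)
The statement is due to Dross and Havet; here is the route I would take to prove it. The plan is to induct on $n$, fixing a median order $\sigma=v_1,\ldots,v_N$ of the given tournament $G$, where $N=\lceil\frac{3}{2}n+\frac{3}{2}k-2\rceil$. The first point is that $N\geq n+k-1$, with surplus $\lceil\frac12(n+k-2)\rceil$, so if $T$ is an out-arborescence (or, by directional duality, an in-arborescence) we are done at once by Theorem~\ref{thm:DH}; more importantly, any genuine sub-arborescence of $T$ that we later hand to Theorem~\ref{thm:DH} is embedded using only its number of vertices plus its number of leaves (minus one), so it ``refunds'' roughly half of the $\frac32$-rate budget allotted to it. Since a tree with a unique source is an out-arborescence and a tree with a unique sink is an in-arborescence, in the remaining case $T$ has at least two sources and two sinks, and in particular $k\geq 2$.

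Next I would set up a dichotomy governed by a large constant $L$. Either $T$ contains a bare path of length at least $L$, or a count of the vertices of degree $\neq 2$ (each edge of the tree obtained by suppressing degree-$2$ vertices corresponds to a bare path of length $<L$) shows $n=O(k)$. In the first case, take a longest bare path $P$; by directional duality we may assume its edge at one endpoint is oriented towards the interior of $P$, cut $P$ near that endpoint, recurse on the two resulting pieces (each gaining one extra leaf at the cut) into an initial window $v_1,\ldots,v_M$ and a terminal window of $\sigma$ with the two attachment vertices placed at the inner ends of their windows, and then route the rest of $P$ through the gap $v_{M+1},\ldots$ between the windows, using $v_M\to v_{M+1}$ together with the robust length-$3$ directed connections of Lemma~\ref{lm:connecting_path_length_3} to realise any orientation pattern; since $P$ is long, the gap contains about $\frac12\ell(P)$ spare vertices beyond the $\ell(P)-1$ needed, which absorbs the waste at the turns and recovers the budget lost to the two new leaves. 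In the second case $n=O(k)$, so I would decompose $T$ into $O(k)$ sub-arborescences by cutting at every branch vertex and every local source or sink, lay these pieces out in disjoint consecutive windows of $\sigma$ in the order of a depth-first traversal of $T$, embed each piece with Theorem~\ref{thm:DH}, and splice consecutive windows using directed paths of length at most $2$ from Corollary~\ref{cor:last}; here the surplus $\lceil\frac12(n+k-2)\rceil$ has to pay the $O(k)$ overhead of the connectors and of the $O(k)$ leaves created by the cuts.

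The hard part, and the reason a naive recursion does not quite work, is controlling the position in $\sigma$ of the vertex along which a recursively embedded subtree is reattached — the cut vertices of $P$ in the first case, and the roots of the arborescence pieces in the second. Theorem~\ref{thm:DH} supplies exactly this for arborescence roots, but for a general oriented tree one must carry a strengthened inductive hypothesis that fixes (or at least tightly bounds) the location of one designated vertex of $T$ in the median order, and thread it through both cases. Granting that, the rest is bookkeeping: verifying that the surplus always dominates the total overhead — which, reassuringly, is of the conjectured order $2k-2$ exactly when $n$ is close to $k$ — choosing $L$ and the decomposition so that the number of pieces and of new leaves is genuinely $O(k)$ with a small enough constant, and handling the few small-$n$ base cases with the ceilings tracked explicitly.
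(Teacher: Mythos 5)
This statement is not proved in the paper at all: it is quoted verbatim as a theorem of Dross and Havet~\cite{DRO-HAV}, so there is no internal proof to compare against. Judged on its own terms, your proposal has a genuine gap, and you name it yourself: ``one must carry a strengthened inductive hypothesis that fixes (or at least tightly bounds) the location of one designated vertex of $T$ in the median order\ldots Granting that, the rest is bookkeeping.'' That strengthened hypothesis \emph{is} the theorem's difficulty. Theorem~\ref{thm:DH} gives root control only for arborescences, and extending position control to a designated vertex of a general oriented tree at the $\frac32(n+k)$ rate is precisely what Dross and Havet's argument (a direct greedy embedding along a median order with an amortized count, not an induction on long bare paths) is engineered to achieve. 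A sketch that assumes this step cannot be accepted as a proof.

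Two of the ``bookkeeping'' claims also fail as stated. In the small case, cutting at every branch vertex \emph{and every local source or sink} produces one piece per block, and a tree can have $\Theta(n)$ blocks (e.g.\ an antidirected path); with $p$ pieces the cost of Theorem~\ref{thm:DH} plus the cut-created leaves plus the $p-1$ connectors is roughly $n+k+2p$, which for $p=\Theta(n)$ overwhelms the surplus $\approx \frac12 n+\frac32 k$, so the arithmetic does not close without a much more careful grouping of blocks. In the long-bare-path case, Lemma~\ref{lm:connecting_path_length_3} only supplies directed length-$3$ connections travelling \emph{forwards} in the median order, so it does not by itself ``realise any orientation pattern'' of the cut segment; a non-directed bare path needs Thomason-type results (Theorems~\ref{thm:appending_non-directed_path} and~\ref{thm:connecting_non-directed_paths}, which is how the present paper handles such paths in its reductions), and a directed one needs the full arborescence procedure, with a quantitative argument that the $\sim\frac12\ell(P)$ spare vertices really cover the waste at up to $\Theta(\ell(P))$ turns. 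As it stands the proposal is a plausible programme, not a proof.
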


\begin{corollary}\label{cor:DHbound}
Let $n,r,k\geq1$, and suppose $G$ is a tournament with at least $\frac{3}{2}n+\frac{3}{2}k-2r$ vertices and $T$ is an oriented forest with $n$ vertices, $r$ components and, in total, $k$ leaves and isolated vertices. Then, $G$ contains a copy of $T$.
\end{corollary}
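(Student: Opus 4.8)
The plan is to deduce the corollary from Theorem~\ref{thm:DHbound}. First I would delete the isolated vertices of $T$, reserving vertices of $G$ to host them, which reduces matters to the case where $T$ has no isolated vertices; then I would join the components of $T$ into a single tree with few enough leaves that Theorem~\ref{thm:DHbound} applies.

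For the reduction, let $a$ be the number of components of $T$ which are single vertices. If $a=r$ then $T$ is edgeless and $n=k=r$, so the hypothesis gives $|G|\geq\frac{3}{2}n+\frac{3}{2}n-2n=n$ and $G$ trivially contains $n$ isolated vertices; so assume $a<r$. Using $n,k\geq 2(r-a)+a$, one readily checks that $|G|\geq\frac{3}{2}n+\frac{3}{2}k-2r\geq a$, so we may pick $W\subseteq V(G)$ with $|W|=a$. Let $T'$ be obtained from $T$ by deleting its isolated vertices, so $T'$ has $n-a$ vertices, $r-a$ components (each with at least two vertices), and $k-a$ leaves. Since
\[
|G-W|=|G|-a\geq\Big(\frac{3}{2}n+\frac{3}{2}k-2r\Big)-a=\frac{3}{2}(n-a)+\frac{3}{2}(k-a)-2(r-a),
\]
it suffices to find a copy of $T'$ in $G-W$ and then embed the isolated vertices of $T$ onto $W$. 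Thus it is enough to prove the corollary when $T$ has no isolated vertices, which we now assume; then every component of $T$ has at least two vertices, hence at least two leaves, and in particular $k\geq 2r$.

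For the main step, write $T_1,\dots,T_r$ for the components of $T$ and, for each $1\leq i\leq r-1$, add an edge (with arbitrary orientation) from a leaf of $T_i$ to a leaf of $T_{i+1}$, choosing these leaves so that the at most two leaves used within any single $T_i$ are distinct; this is possible as each $T_i$ has at least two leaves. This produces a tree $T^*$ on the vertex set of $T$ with $T\subseteq T^*$. Exactly $2(r-1)$ vertices are used, each receiving exactly one new edge and each previously a leaf of $T$, and no other degrees change, so $T^*$ has exactly $k-2(r-1)$ leaves. By Theorem~\ref{thm:DHbound}, every tournament with at least $\lceil\frac{3}{2}n+\frac{3}{2}(k-2(r-1))-2\rceil=\lceil\frac{3}{2}n+\frac{3}{2}k-3r+1\rceil$ vertices contains a copy of $T^*$, and therefore of $T$. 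Since $r\geq 1$, we have $\frac{3}{2}n+\frac{3}{2}k-3r+1\leq\frac{3}{2}n+\frac{3}{2}k-2r$, so the hypothesis $|G|\geq\frac{3}{2}n+\frac{3}{2}k-2r$ (together with $|G|$ being an integer) shows that $G$ contains a copy of $T$, as required.

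I expect the only subtlety to be the handling of isolated vertices: turning them into vertices of a bare path inside $T^*$ would cost about $\frac{3}{2}$ vertices of $G$ apiece rather than $1$, so they must instead be embedded directly onto the reserved set $W$, and the arithmetic in the displayed line is set up precisely so that these $a$ reserved vertices exactly bridge the gap between the hypothesis available for $T$ and the one needed for $T'$.
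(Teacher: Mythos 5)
Your proof is correct, but it takes a genuinely different route from the paper's. The paper embeds the components of $T$ greedily one at a time: it observes that each component $T_i$ satisfies $n_i+3k_i\geq 4$ (with equality exactly for isolated vertices), so that if some component remains unembedded, the leftover tournament still has at least $\tfrac{3}{2}n_{s+1}+\tfrac{3}{2}k_{s+1}-2$ vertices and Theorem~\ref{thm:DHbound} applies to that single component, giving a contradiction. You instead set aside the isolated vertices onto a reserved set $W$ and then \emph{merge} the remaining components into one tree $T^*$ by adding $r-1$ edges between distinct leaves of consecutive components; each added edge destroys two leaves at no cost in vertices, so $T^*$ has $k-2(r-1)$ leaves and Theorem~\ref{thm:DHbound} applies to $T^*$ directly, with the saving of $3$ per merge comfortably exceeding the $2$ per component granted by the hypothesis. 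Your arithmetic checks out throughout (in particular $k\geq 2r$ after the reduction guarantees $T^*$ has at least $2$ leaves, and the leaf-merging trick is valid since every nontrivial component has at least two leaves). What the paper's approach buys is uniformity — isolated vertices need no special treatment, being absorbed into the inequality $n_i+3k_i\geq 4$ — and a single application pattern; what yours buys is a one-shot application of Theorem~\ref{thm:DHbound} to a single tree rather than an iterative extremal argument. Both are complete proofs of the corollary.
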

\begin{proof} Label the components of $T$ as $T_1,\ldots,T_r$, and say, for each $i\in [r]$, that $T_i$ has $n_i$ vertices and, in total, $k_i$ isolated vertices and leaves. Note that $n_i+3k_i\geq 4$ for each $i\in [r]$. Take the largest $s\leq r$ for which there are vertex-disjoint subgraphs $S_i\subset G$, $i\in [s]$ such that, for each $i\in [s]$, $S_i$ is a copy of $T_i$. Suppose $s<r$, for otherwise we have already found a copy of $T$ in $G$, and note that
\[
\biggl|G-\bigcup_{i\in [s]}V(S_i)\biggr|\geq |G|-n+n_{s+1}\geq \sum_{i\in [r]\setminus \{s+1\}}\frac{n_i+3k_i-4}{2} +\frac{3n_{s+1}}{2}+\frac{3k_{s+1}}{2}-2\geq \frac{3n_{s+1}}{2}+\frac{3k_{s+1}}{2}-2.
\]
Therefore, by Theorem~\ref{thm:DHbound}, $G-\cup_{i\in [s]}V(S_i)$ contains a copy of $S_{s+1}$, a contradiction.
\end{proof}
For Theorem~\ref{thm:fewcst} it is convenient to use the following bound, originally proved by El Sahili~\cite{El_S}, which can also be recovered from Theorem~\ref{thm:DHbound} by observing we must have $k\leq n-1$.

\begin{theorem}\label{cor:3n-3}
For each $n\geq 2$, every $(3n-3)$-vertex tournament contains a copy of every $n$-vertex oriented tree.
\end{theorem}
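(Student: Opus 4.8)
The plan is to derive Theorem~\ref{cor:3n-3} directly from Theorem~\ref{thm:DHbound}, exploiting the fact that an oriented tree cannot have too many leaves. First I would fix an $n$-vertex oriented tree $T$ and let $k$ denote its number of leaves. For $n\geq 3$, any tree on $n$ vertices has at least one internal (non-leaf) vertex, so $k\leq n-1$; hence $\frac{3}{2}n+\frac{3}{2}k-2\leq \frac{3}{2}n+\frac{3}{2}(n-1)-2=3n-\frac{7}{2}$, and since $3n-3$ is an integer with $3n-3\geq 3n-\frac{7}{2}$ this gives $\lceil\frac{3}{2}n+\frac{3}{2}k-2\rceil\leq 3n-3$.

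Next I would use the trivial monotonicity of the embedding property in the order of the host tournament: any $(3n-3)$-vertex tournament $G$ contains an induced subtournament $G'$ on exactly $\lceil\frac{3}{2}n+\frac{3}{2}k-2\rceil$ vertices (obtained by deleting arbitrary vertices), and by Theorem~\ref{thm:DHbound} applied to $G'$ we find a copy of $T$ in $G'$, and therefore in $G$. For the remaining case $n=2$, the only $2$-vertex oriented tree is a single directed edge, which appears in every tournament on at least $2$ vertices, and $3n-3=3\geq 2$, so the statement holds there as well.

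There is essentially no obstacle here beyond bookkeeping. The only points requiring any care are the ceiling computation above, the observation $k\leq n-1$ (which is what fails at $n=2$, where the tree is an edge with two leaves, and so must be treated separately), and the immediate fact that if every tournament on $N$ vertices contains every $n$-vertex oriented tree then so does every tournament on at least $N$ vertices. I expect the entire argument to fit in a few lines.
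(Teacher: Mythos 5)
Your proof is correct and follows exactly the route the paper indicates: it remarks that Theorem~\ref{cor:3n-3} "can be recovered from Theorem~\ref{thm:DHbound} by observing we must have $k\leq n-1$", which is precisely your calculation. You are in fact slightly more careful than the paper's remark, since you notice that $k\leq n-1$ fails for $n=2$ (where $k=2$) and handle that case separately.
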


\subsection{Non-directed paths}\label{sec:andrewrulesok}
In both the proofs of Theorem~\ref{thm:fewlinear} and~\ref{thm:fewcst}, we will take a median order, $\sigma=v_1,\ldots, v_m$ say, of an $m$-vertex tournament, $G$ say, and carefully partition this order into intervals before embedding different parts of the tree into each interval. This embedding must thus work when $v_iv_j\in E(G)$ for each $1\leq i<j\leq m$, that is, when $G$ is a transitive tournament. Our embeddings then, will embed the vertices along a directed path into a consistent order under $\sigma$. From this, embedding directed paths will be more restrictive than embedding paths which have some changes of direction. Here, we will recall some results of Thomason which we use to embed paths with changes of directions, allowing us to assume later that each maximal bare subpath in the tree is directed.

To discuss the changes of direction in a path and recall these results, we use the terminology of blocks. A \emph{block} of an oriented path $P$ is a maximal directed subpath. When we introduce an oriented path we assume it has an associated overall direction, and thus a first and last vertex as well as a first block and a last block. When the path is a directed path we will always assume the associated direction is the natural one, i.e., the one in which the first vertex has no in-neighbours.

With only a couple of exceptions, when a tournament $G$ has one (or two) more vertices than an oriented path $P$, we can embed $P$ into $G$, while furthermore embedding one (or two) endvertices into a matching set of two vertices, if each endvertex is next to a block of length 1. That is, we have the following two results of Thomason.

\begin{theorem}[{\cite[Theorem~1]{THO}}]\label{thm:appending_non-directed_path}
Let $P$ be an oriented path of order $n$ with first block of length $1$. Let $G$ be a tournament of order $n+1$ and $X$ be a subset of $V(G)$ of order at least $2$. 

Then, there is a copy of $P$ in $G$ with first vertex in $X$.
\end{theorem}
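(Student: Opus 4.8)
The plan is to argue by induction on $n$. By directional duality we may assume the first block of $P$ is $p_1\to p_2$; write $P$ as $p_1\to p_2\leftarrow p_3\leftarrow\cdots$, so that $p_1$ is the first vertex and, when $n\geq 3$, the edge $p_3\to p_2$ lies in the second block. We may also assume $|X|=2$, say $X=\{x_1,x_2\}$. The base case $n=2$ is immediate: $P$ is the single edge $p_1\to p_2$, and since $x_1x_2\in E(G)$ not both of $x_1,x_2$ are sinks of $G$, so one of them, $x_i$ say, has an out-neighbour $y$ and $\phi(p_1)=x_i,\ \phi(p_2)=y$ is the required copy. I would settle the remaining small cases (bounded $n$, or $n$ arbitrary but $G$ of bounded order, where the degree-based choices below can fail, e.g. for regular tournaments) by direct inspection.

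For the inductive step with $n\geq 3$, the main idea is to delete the first vertex $p_1$, embed $P':=P-p_1$ into a tournament obtained from $G$ by deleting one further vertex (keeping exactly one vertex of slack), and then reattach $p_1$ into $X$. This works cleanly when the second block of $P$ has length $1$, i.e. $P=p_1\to p_2\leftarrow p_3\to\cdots$: then $P'$ again has first block of length $1$, namely $p_3\to p_2$, so the induction hypothesis applies to $P'$. I pick $x_i\in X$ with $d_G^+(x_i)\geq 2$ — such $x_i$ exists unless both $x_1,x_2$ have out-degree at most $1$, a rigid configuration (the tail of the edge between them is then dominated by all other vertices except the head) which can be handled separately — set $\phi(p_1)=x_i$, and apply the induction hypothesis to $P'$, the tournament $G-x_i$ (which has $|P'|+1$ vertices), and the set $N_G^+(x_i)$ (of size at least $2$). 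The resulting embedding of $P'$ sends $p_2$ to an out-neighbour of $x_i$, so, as $p_1\to p_2$, prepending $\phi(p_1)=x_i$ gives a copy of $P$ with first vertex in $X$.

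The remaining case, when the second block of $P$ has length at least $2$ and so $P'$ has first block of length $\geq 2$, is where I expect the main obstacle. Here I would peel off the first two blocks two vertices at a time: relabelling so that $x_1\to x_2$, set $\phi(p_1)=x_1$, choose an out-neighbour $w$ of $x_1$ with $d_G^-(w)\geq 3$ (only a bounded number of vertices have in-degree at most $2$, so such $w$ exists unless $x_1$ is near a sink, in which case take $\phi(p_1)=x_2$ instead and argue symmetrically), set $\phi(p_2)=w$, and reduce to embedding $P-\{p_1,p_2\}$ — which has $n-2$ vertices — into $G-\{x_1,w\}$ — which has $n-1$ vertices, so one vertex of slack remains — with first vertex $p_3$ mapped into $N_G^-(w)\setminus\{x_1\}$, a set of size at least $2$, as forced by the edge $p_3\to p_2$. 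If the second block of $P$ has length exactly $2$ this is again an instance of the theorem, since $P-\{p_1,p_2\}$ then has first block of length $1$; if it is longer, one iterates along the second block. The delicate point is the bookkeeping that keeps the prescribed set for each successive turning vertex of size at least $2$ — at each step one must be able to pick an image of in-degree at least $2$ in the current subtournament — together with a careful treatment of the finitely many configurations where this is impossible. Alternatively, this case can be routed through Theorem~\ref{thm:DH}, using that the union of the first two blocks of $P$ is an in-arborescence rooted at $p_2$ with exactly two leaves; the difficulty then becomes transferring control from the root $p_2$ to the leaf $p_1$, so that $p_1$ lands in $X$, and splicing this embedding with one of the rest of $P$.
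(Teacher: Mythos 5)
First, note that the paper does not prove this statement: it is quoted verbatim from Thomason \cite{THO} (his Theorem~1), so there is no in-paper argument to compare yours against, and your proposal has to stand on its own as a proof of an external black box.

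As a sketch it correctly isolates the easy part, but the two places you defer are exactly where the content of the theorem lies, and neither is routine. (i) Even in the case where the second block has length $1$, you still owe the configuration in which both vertices of $X$ have out-degree at most $1$ in $G$; this occurs for every $n$ (take $x_1\rightarrow x_2$ and direct all other edges at $x_1$ into $x_1$, and all but at most one other edge at $x_2$ into $x_2$), and there the induction hypothesis cannot be fed a set $N_G^+(x_i)$ of size $2$, so a separate argument is genuinely required, not just asserted. (ii) Much more seriously, the case of a second block of length at least $2$ is not a matter of ``bookkeeping over finitely many configurations''. Peeling that block off greedily means finding, inside $G-x_1$, a directed path traversing the whole second block backwards into $w$ with only one spare vertex; already in the extreme case $P=p_1\rightarrow p_2\leftarrow p_3\leftarrow\cdots\leftarrow p_n$ this asks for a directed path on $n-1$ vertices of the $n$-vertex tournament $G-x_1$ ending in $N_G^+(x_1)$. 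A greedy backward extension can strand itself long before covering that many vertices (the current endpoint's unused in-neighbourhood can be exhausted), and the obstructions are not a bounded list of exceptional tournaments: they depend on the interaction between the order in which vertices are consumed and the neighbourhoods of the turning vertices, which is why Thomason's proof of this statement is a lengthy case analysis and why later treatments (Havet--Thomass\'e) route such embeddings through median orders, which give global control over long directed blocks (compare Theorem~\ref{thm:DH} and Lemma~\ref{lm:basic_properties_of_median_orders}). Your alternative suggestion via Theorem~\ref{thm:DH} founders on the point you yourself flag: that theorem places the root $p_2$ of the in-arborescence formed by the first two blocks, not the leaf $p_1$, and transferring the constraint $p_1\in X$ from the root to a leaf is precisely the difficulty. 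So the proposal is an outline of the easy half together with an accurate description of the hard half, rather than a proof.
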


\begin{theorem}[{\cite[Theorem~5]{THO}}]\label{thm:connecting_non-directed_paths}
Let $P$ be a non-directed oriented path of order $n$ with  first and last block of length 1. Let $G$ be a tournament of order $n+2$ and $X$ and $Y$ be two disjoint subsets of $V(G)$ of order at least 2.

If $P$ does not consist of three blocks with length one, then there is a copy of $P$ in $G$ with first vertex in $X$ and last vertex in $Y$.
\end{theorem}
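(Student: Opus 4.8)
The plan is to prove Theorem~\ref{thm:connecting_non-directed_paths} by induction on $n=|P|$, with a case analysis according to the block structure of $P$ near its two ends, using Theorem~\ref{thm:appending_non-directed_path} to handle sub-pieces. It is convenient first to weaken the hypothesis $|G|=n+2$ to $|G|\geq n+2$: this is harmless, since once $|G|\geq n+3\geq 7$ the disjoint sets $X,Y$ cannot both have order $2$, so one may delete a surplus vertex of $G$ while keeping $|X|,|Y|\geq 2$, and iterate. One should note at the outset that the conditions on the first and last blocks cannot be relaxed in the induction: in a transitive tournament, with $Y$ taken to be the two source vertices, one cannot force into $Y$ the last vertex of a path whose last block has length at least $2$ (that vertex must terminate a long directed path, but the sources have no in-neighbours), and dually for the first block and $X$. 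So the strengthened statement carried through the induction must retain both end-block conditions.

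The base cases are the paths of small order, checked directly. The crucial one is the order-$3$ path $v_1\to v_2\leftarrow v_3$ (and, by directional duality, its reverse), where the claim reduces to finding $a\in X$ and $c\in Y$ with a common out-neighbour; a short pigeonhole argument on out-neighbourhoods, using $|G|\geq 5$ and that $X,Y$ are disjoint of order at least $2$, supplies this. It is precisely here that the excluded configuration shows itself: for the order-$4$ path consisting of three blocks of length one the analogous check fails — taking $G$ to be a transitive tournament on $6$ vertices with a single edge reversed and suitable disjoint $X,Y$ of order $2$ yields no copy — so this path must be set aside. A handful of further small paths (of order $4$ and $5$, notably the path of four blocks of length one) are also taken as base cases, precisely so that the inductive reductions never land on the excluded path except as one of these base cases.

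For the inductive step one peels vertices off an end of $P$; using the symmetry of reversing all orientations and swapping $X$ with $Y$ (which preserves all the hypotheses, including the exclusion), we peel from whichever end is convenient. The subtlety is that simply removing the last vertex can destroy the ``last block of length one'' property, when the penultimate block has length at least $2$, and the induction hypothesis is then genuinely unavailable by the remark above. So instead one peels off a whole tail — the last block together with enough of the penultimate block that the remaining \emph{stump} again ends in a block of length one (in the extreme, the stump is one of the order-$3$ base cases). One then applies the induction hypothesis to the stump, embedding it into a suitable sub-tournament of $G$ with the relevant endpoint of the stump forced into a carefully chosen set of at least $2$ candidate images, and finally re-attaches the peeled tail, which is a union of directed blocks the first of which hangs off that forced endpoint. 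The flexibility coming from $X$ and $Y$ having order at least $2$, together with the one or two spare vertices of $G$, is exactly what is needed to thread the tail through while keeping the total overhead at $+2$ rather than letting it grow with the number of blocks; the median-order facts of Lemma~\ref{lm:basic_properties_of_median_orders} can be used to route the directed pieces efficiently.

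The hard part will be the bookkeeping in this last step: choosing the sub-tournament hosting the stump and the forced candidate set so that the size and adjacency conditions are re-established on the smaller instance; handling the degenerate situations where $Y$ (or $X$) is too ``dominant'' for the forced set to have order $2$, in which case $G$ is nearly transitive near that set and the copy is built by an ad hoc argument; and checking that no reduction ever produces the excluded path outside the finitely many base cases. Pinning down the exact form of the strengthened inductive statement — how large to demand each prescribed set, and how many spare vertices of $G$ to carry, as a function of the end-block lengths — is the delicate core of the argument, and it is precisely this calibration that forces the single genuine exception.
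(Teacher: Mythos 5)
First, a point of order: the paper does not prove Theorem~\ref{thm:connecting_non-directed_paths} at all --- it is imported verbatim as Theorem~5 of Thomason~\cite{THO} (just as Theorem~\ref{thm:appending_non-directed_path} is Theorem~1 there), so there is no in-paper argument to compare yours against. Judged on its own terms, your proposal is an outline rather than a proof, and the gap sits exactly where you locate it yourself: the ``strengthened inductive statement'' is never formulated, the base cases are never enumerated or verified, and the re-attachment step is asserted rather than carried out. That last step is not bookkeeping --- it is the theorem. After the stump is embedded with its free endpoint forced into a $2$-set of candidate images, you must thread a directed tail of length $\ell$ (the last block, plus possibly part of the penultimate block) from one of those two prescribed vertices through the $\ell+O(1)$ leftover vertices. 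A prescribed vertex of a tournament on $\ell+1$ or $\ell+2$ vertices need not start a directed path of length $\ell$ (in a transitive tournament, a vertex near the sink does not), so the candidate $2$-set must be chosen with foreknowledge of which vertices will be left over, while the leftover set itself depends on where the stump goes. Resolving this circularity is precisely the content of Thomason's long case analysis, and your sketch does not engage with it; invoking Lemma~\ref{lm:basic_properties_of_median_orders} here does not help, since a median order of the leftover tournament bears no controlled relation to the prescribed attachment vertices.

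Two smaller points. Your justification for relaxing $|G|=n+2$ to $|G|\geqslant n+2$ is garbled: disjoint sets $X,Y$ of order exactly $2$ are perfectly possible for any $|G|\geqslant 4$; the reduction works simply because one can delete a vertex outside $X\cup Y$ (or shrink whichever of $X,Y$ has order greater than $2$), no case distinction needed. And the claimed counterexample for the excluded path of three length-one blocks (``a transitive tournament on $6$ vertices with a single edge reversed and suitable $X,Y$'') is not exhibited or checked; since the entire shape of the induction is calibrated around this one exception, that verification cannot be left implicit. As it stands the proposal identifies a plausible strategy --- and one broadly in the spirit of Thomason's actual argument --- but it does not constitute a proof.
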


\subsection{Short directed paths}
Having found parts of a tree in a median order of a tournament, we will often wish to join two of them with a directed path. The following lemma shows that this is possible across a median order, even in cases where the interval in between the vertices to be joined contains some forbidden vertices.

\begin{lemma}\label{lm:connecting_path_length_3}
Suppose $G$ is an $n$-vertex tournament  with a median order $\sigma=v_1,\ldots,v_n$. Then, for any set $A\subset V(G)\setminus\{v_1,v_n\}$ with $|A|\leq (n-8)/6$, there is a directed $v_1,v_n$-path in $G-A$ with length~3.
\end{lemma}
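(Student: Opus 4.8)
The aim is to find two vertices $x,y\in V(G)\setminus(A\cup\{v_1,v_n\})$ with $v_1x,\,xy,\,yv_n\in E(G)$. Since $xy\in E(G)$ already forces $x\neq y$, and the conditions $x,y\notin\{v_1,v_n\}$ hold automatically once $x\in N^+(v_1)$ and $y\in N^-(v_n)$, it suffices to find an edge of $G$ directed from a vertex of $X:=N^+(v_1)\setminus(A\cup\{v_n\})$ to a vertex of $Y:=N^-(v_n)\setminus(A\cup\{v_1\})$: any such edge $xy$ gives the directed path $v_1xyv_n$, which has length $3$ and avoids $A$. By Lemma~\ref{lm:basic_properties_of_median_orders}~ii) we have $|N^+(v_1)|,|N^-(v_n)|\geq (n-1)/2$, so, as $|A|\leq(n-8)/6$,
\[
|X|,\,|Y|\ \geq\ \tfrac{n-1}{2}-|A|-1\ \geq\ \tfrac{2n-1}{6},
\]
and $X,Y\subseteq\{v_2,\dots,v_{n-1}\}$.

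Let $a^\ast$ be the largest index with $v_{a^\ast}\in A$ (set $a^\ast=1$ if $A=\emptyset$) and $a_\ast$ the smallest such index (set $a_\ast=n$ if $A=\emptyset$). The key observation is a consequence of Corollary~\ref{cor:last}: \emph{if there is some $m$ with $a^\ast<m\leq n-2$, $v_1\to v_m$ and $v_n\to v_m$, then we are done.} Indeed, since $v_n\to v_m$ we have $v_m\not\to v_n$, so applying Corollary~\ref{cor:last} to the median order $v_m,v_{m+1},\dots,v_n$ (valid by Lemma~\ref{lm:basic_properties_of_median_orders}~i)), and inspecting its proof — which shows $|N^+(v_m)\cap V|+|N^-(v_n)\cap V|\geq |V|+1$ for $V=\{v_{m+1},\dots,v_{n-1}\}$ — there is $w\in V$ with $v_m\to w\to v_n$; since $A\cap V=\emptyset$ because $m>a^\ast$, we may in fact take such a $w\notin A$, and then $v_1v_mwv_n$ is a directed path of length $3$ in $G-A$ (its vertices are distinct as $1<m<\mathrm{index}(w)<n$). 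Symmetrically, we are also done if there is some $m$ with $3\leq m<a_\ast$, $v_m\to v_1$ and $v_m\to v_n$: then Corollary~\ref{cor:last} applied to $v_1,\dots,v_m$ yields $x\in\{v_2,\dots,v_{m-1}\}\setminus A$ with $v_1\to x\to v_m$, and $v_1xv_mv_n$ is as required.

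So assume neither kind of vertex $m$ exists, and suppose for contradiction that there is also no edge of $G$ from $X$ to $Y$; thus $y\to x$ for all $x\in X$, $y\in Y$ with $x\neq y$. A short check shows $|X\cap Y|\leq 1$ (two vertices of $X\cap Y$ would dominate one another), and, since $v_1\to y$ for $y\in Y$ would force $y\in X\cap Y$ and dually for $v_n$, that $|N^-(v_1)|\geq |Y|-1$ and $|N^+(v_n)|\geq |X|-1$; with Lemma~\ref{lm:basic_properties_of_median_orders}~ii) and $|N^+(v_1)|+|N^-(v_1)|=n-1$ this gives $|X|,|Y|\leq (n+1)/2$ and $|X|+|Y|\leq n$. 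Moreover, the non‑existence of the two kinds of $m$ forces $X\cap\{v_{a^\ast+1},\dots,v_{n-2}\}\subseteq X\cap Y$ and $Y\cap\{v_3,\dots,v_{a_\ast-1}\}\subseteq X\cap Y$, so that (up to one exceptional vertex each) $X$ lies in $\{v_2,\dots,v_{a^\ast}\}\cup\{v_{n-1}\}$ and $Y$ lies in $\{v_2\}\cup\{v_{a_\ast},\dots,v_{n-1}\}$; in particular $a_\ast\le a^\ast$, $|X|\le a^\ast+1$ and $|Y|\le n-a_\ast+2$. One then derives a contradiction by combining: (a) the fact that $Y$ dominates $X$ while $X$ sits near the start and $Y$ near the end of the order, forcing many backward edges from $Y$ to $X$; and (b) the bound of at most $\binom n2/2$ backward edges in a median order (obtained by summing Lemma~\ref{lm:basic_properties_of_median_orders}~ii) over all vertices), together with the lower bounds $|X|,|Y|\geq (2n-1)/6$ and the localisation above.

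The delicate step, and the main obstacle, is step (b): the inequalities here are extremely tight — the "no edge from $X$ to $Y$" configuration is very nearly realisable as a median order — so the contradiction must be extracted from the precise positions of $X$ and $Y$ pinned down by the case analysis rather than from crude counting, and the factor $6$ (rather than $2$) in the hypothesis $|A|\leq(n-8)/6$ is exactly the slack this combination costs. Particular care is needed in the degenerate situation where $A$ is very spread out, so that $a^\ast$ is close to $n$ and $a_\ast$ close to $2$ and neither kind of $m$ can exist; there one instead isolates a long $A$‑free interval of $\sigma$ and applies Corollary~\ref{cor:last} within it.
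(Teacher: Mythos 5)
Your reduction is sound as far as it goes: the opening observation (it suffices to find an edge from $X=N^+(v_1)\setminus(A\cup\{v_n\})$ to $Y=N^-(v_n)\setminus(A\cup\{v_1\})$), the two ``done if such an $m$ exists'' cases via Corollary~\ref{cor:last}, the bound $|X\cap Y|\leq 1$, and the localisation of $X$ near the start and $Y$ near the end of $\sigma$ are all correct and broadly parallel the first half of the paper's argument (which phrases the same reduction as colouring $B_1\subseteq N^+(v_1)$ red and $B_2\subseteq N^-(v_n)$ blue and asking for a red in-neighbour of a blue vertex). The problem is that the proof stops exactly where the real work begins. The final contradiction is never derived: steps (a) and (b) are descriptions of a strategy, not an argument, and as you yourself note, the crude count cannot work. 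Indeed, in the surviving configuration one only forces roughly $|X|\cdot|Y|\geq\bigl(\tfrac{2n-1}{6}\bigr)^2\approx n^2/36$ backward edges (and at most $|X||Y|\leq n^2/4$ even if $Y$ sits entirely after $X$), while summing Lemma~\ref{lm:basic_properties_of_median_orders}~ii) only caps the number of backward edges at $n(n-1)/4$; there is no numerical clash, and no amount of repositioning $X$ and $Y$ within the windows you have established changes this. So the statement ``one then derives a contradiction by combining (a) and (b)'' is a genuine gap, not a routine verification, and the closing paragraph introduces a further unhandled case (``$A$ very spread out'') with another unexecuted plan.

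The missing idea is the one the paper's proof actually turns on: a count of the \emph{third} class of vertices, those in neither $X$ nor $Y$ (the ``green'' vertices $B_0$, of which there are at most $2|A|+1$ by inclusion--exclusion). One partitions $\{v_2,\ldots,v_{j-1}\}$ (where $v_j$ is the last $Y$-vertex) into maximal intervals containing no $Y$-vertex; applying Lemma~\ref{lm:basic_properties_of_median_orders}~ii) \emph{locally} to the $Y$-vertex immediately following each interval shows that at least half of each interval consists of in-neighbours of that $Y$-vertex, which under the ``no $X\to Y$ edge'' assumption must all be green. Hence green vertices outnumber $X$-vertices on $\{v_2,\ldots,v_{j-1}\}$, which forces $\Omega(n)$ green vertices and contradicts $|B_0|\leq 2|A|+1\leq (n-8)/3+1$. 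Nothing in your outline performs, or substitutes for, this interval-by-interval comparison; without it (or a worked-out alternative), the proof is incomplete.
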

\begin{proof} If there are some distinct $x,y\in (N^+_G(v_1)\cap N^-_G(v_n))\setminus A$, then assume, by relabelling if necessary, that $xy\in E(G)$ and observe that $v_1xyv_n$ is a path with length 3 in $G-A$, as required. Therefore, suppose that $|(N^+_G(v_1)\cap N^-_G(v_n))\setminus A|\leqslant 1$.

By Lemma~\ref{lm:basic_properties_of_median_orders}~ii), we have $|N^+_G(v_1)\setminus \{v_n\}|,|N^-_G(v_n)\setminus\{v_1\}|\geqslant (n-2)/2$. Let $B_1=N^+_G(v_1)\setminus (A\cup N^-_G(v_n)\cup\{v_n\})$ and $B_2=N^-_G(v_n)\setminus (A\cup\{v_1\})$. Note that $|B_1|\geqslant n/2-2-|A|>0$ and $|B_2|\geqslant n/2-1-|A|$. Let $B_0=V(G)\setminus (B_1\cup B_2\cup \{v_1,v_n\})$, so that
\begin{equation}\label{eq:B0}
|B_0|=n-2-|B_1|-|B_2|\leqslant n-2-(n/2-2-|A|)-(n/2-1-|A|)=2|A|+1.
\end{equation}
Colour vertices in $B_0,B_1$ and $B_2$ respectively green, red and blue. If any blue vertex, $x$ say, has a red in-neighbour, $y$ say, then $v_1yxv_n$ is a path with length 3 in $G-A$, as required. Therefore, suppose that every in-neighbour of each blue vertex is a green vertex or a blue vertex, for otherwise we have the desired path.

Let $j$ be the largest integer such that $v_j$ is blue. Let $A_1=A\cap\{v_2,\ldots,v_{j-1}\}$ and $A_2=A\cap \{v_{j+1},\ldots,v_{n-1}\}$, so that $|A_1|+|A_2|=|A|$. For the appropriate $r$, let $I_1,\ldots,I_r$ be the maximal intervals of $v_2,\ldots,v_{j-1}$ consisting of only red and green vertices. Observe that, for each $i\in [r]$, the vertex after $I_i$ in $\sigma$ is blue, and has at least $|I_i|/2$ in-neighbours in $I_i$ by Lemma~\ref{lm:basic_properties_of_median_orders}~ii), all of which must be green. Thus, every interval $I_i$, $i\in [r]$, contains at least as many green vertices as red vertices.

As every red or green vertex before $v_j$ in $\sigma$ is in some interval $I_i$, $i\in [r]$, we have that there are at least as many green vertices as there are red vertices in $\{v_2,\ldots,v_{j-1}\}$.  As $|N^+_G(v_1)\cap \{v_2,\ldots,v_j\}|\geqslant (j-1)/2$ by Lemma~\ref{lm:basic_properties_of_median_orders}~ii), at least $(j-1)/2-|A_1|-1$ of the vertices in $\{v_2,\ldots,v_{j-1}\}$ are red. Therefore, there are at least $(j-1)/2-|A_1|-1$ green vertices in $\{v_2,\ldots,v_{j-1}\}$. By~\eqref{eq:B0} and the definition of $A_2$, we have that there at most $2|A|+1-|A_2|$ green vertices in $\{v_2,\ldots,v_{j-1}\}$. Thus, $2|A|+1-|A_2|\geqslant (j-1)/2-|A_1|-1$. Rearranging, and using that $|A_1|+|A_2|=|A|$, we get $3|A|\geqslant 2|A_2|+j/2-5/2$.

Now, by Lemma~\ref{lm:basic_properties_of_median_orders}~ii), $|N^-_G(v_n)\cap (\{v_{j+1},\ldots,v_{n-1}\})|\geqslant (n-1-j)/2$, so, as $v_j$ is the last blue vertex in $\sigma$, there are at least $(n-1-j)/2$ vertices in $A_2$.
Thus, $3|A|\geqslant 2|A_2|+j/2-5/2\geqslant (n-j)+j/2-7/2=n-j/2-7/2$. As $j\leqslant n-1$, we have $3|A|\geqslant (n-6)/2$, contradicting that $|A|\leq(n-8)/6$.
\end{proof}

\subsection{Trees and random sets}
Here we collect a number of elementary properties of oriented trees, which we use later, before recalling Chernoff's lemma. Our first proposition considers the number of maximal bare paths in a (non-oriented) tree with $k$ leaves, as follows.

\begin{proposition}\label{prop:deg3} An $n$-vertex tree $T$ with $k\geq 2$ leaves has at most $2k-3$ maximal bare paths, one of which must have length at least $(n-1)/(2k-3)$, and at most $2k-2$ vertices whose degree is not 2.
\end{proposition}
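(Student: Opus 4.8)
The plan is to analyze the structure of $T$ by looking only at the vertices of degree different from $2$ together with the leaves. First I would let $W \subseteq V(T)$ be the set of vertices $v$ with $d_T(v) \neq 2$; this includes all $k$ leaves. Contracting every maximal bare path of $T$ to a single edge yields a multigraph $T'$ on vertex set $W$ which is again a tree (contracting internal degree-$2$ vertices of a tree preserves the tree structure, and no multi-edges or loops can arise since $T$ is a tree). The edges of $T'$ correspond exactly to the maximal bare paths of $T$, the leaves of $T'$ are exactly the $k$ leaves of $T$, and every non-leaf vertex of $T'$ has degree at least $3$.

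The key step is then a standard counting argument on $T'$. Writing $\ell$ for its number of leaves (so $\ell = k$) and $b$ for the number of its remaining vertices (each of degree $\geq 3$), we have $|V(T')| = k + b$ and $|E(T')| = k + b - 1$ since $T'$ is a tree. Summing degrees, $2|E(T')| = \sum_{v} d_{T'}(v) \geq k \cdot 1 + b \cdot 3$, so $2(k+b-1) \geq k + 3b$, which rearranges to $b \leq k - 2$. Hence $T$ has at most $k + (k-2) = 2k - 2$ vertices of degree not equal to $2$, and the number of maximal bare paths is $|E(T')| = k + b - 1 \leq 2k - 3$. (When $b = 0$, i.e. $T$ is itself a bare path, this gives one maximal bare path, consistent with $k = 2$.)

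For the final claim, note that the maximal bare paths partition all $n-1$ edges of $T$ among themselves (every edge of $T$ lies in exactly one maximal directed-structure-irrelevant bare path, since the bare paths are precisely the preimages of edges under the contraction). Since there are at most $2k-3$ of them and their lengths sum to $n-1$, by averaging at least one has length at least $(n-1)/(2k-3)$. I would double-check the edge cases $k = 2$ (where $T$ is a path, one bare path of length $n-1$, zero vertices of degree $\neq 2$ if $n = 2$ — though one should confirm the leaves themselves are being counted correctly, as leaves have degree $1 \neq 2$; so the count of "vertices of degree not $2$" is $k + b$, which is $\leq 2k-2$ as shown).

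The main obstacle is really just being careful about the bookkeeping: making sure that "maximal bare path" as defined in the paper (where internal vertices have degree $2$, endpoints need not) matches the edge-contraction picture, that leaves are correctly included in the degree count, and that the contracted graph genuinely remains a simple tree. Once the contraction is set up cleanly, everything follows from the handshake inequality $2|E| \geq (\text{leaves}) + 3(\text{branch vertices})$ for a tree whose non-leaves have degree at least $3$. No deep idea is needed beyond this.
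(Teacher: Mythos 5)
Your proof is correct and follows essentially the same route as the paper's: contract each maximal bare path to an edge, apply the handshake inequality to the resulting tree (whose non-leaf vertices all have degree at least $3$) to get at most $k-2$ branch vertices, and then average the path lengths over the at most $2k-3$ edges. The bookkeeping points you flag (simplicity of the contracted tree, leaves counted among the degree-$\neq 2$ vertices, the $k=2$ case) all check out.
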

\begin{proof} For the appropriate $r$, let $P_1,\ldots,P_{r}$ be the maximal bare paths in $T$, and label vertices such that, for each $i\in [r]$, $P_i$ is an $x_i,y_i$-path. Note that the tree $T'$ formed from $T$ by replacing each path $P_i$, $i\in [r]$, by a single undirected edge has $r$ edges, $r+1$ vertices, $k$ leaves and no degree 2 vertices.
Therefore,
\[
2(|T'|-1)=2e(T')=\sum_{v\in V(T')}d_{T'}(v)\geqslant k+2(|T'|-k)+|\{v:d_{T'}(v)\geqslant 3\}|,
\]
and thus $|\{v:d_{T'}(v)\geqslant 3\}|\leqslant k-2$. As $\{v:d_{T}(v)\geqslant 3\}=\{v:d_{T'}(v)\geqslant 3\}$, $T$ has at most $2k-2$ vertices whose degree is not 2. Furthermore, $|T'|=r+1\leq k+(k-2)$, so that $r\leq 2k-3$. Finally, as $\sum_{i\in [r]}\ell(P_i)=e(T)=n-1$, one of the paths $P_i$, $i\in [r]$, has length at least $(n-1)/(2k-3)$.
\end{proof}

In the main embedding for both Theorem~\ref{thm:fewlinear} and Theorem~\ref{thm:fewcst}, we will embed collections of small subtrees with directed paths between them. The next two propositions (appropriately applied to an auxiliary oriented tree with vertices representing subtrees and edges representing paths) will give us an order in which these trees and paths will be embedded along a median order of the tournament.
We use Proposition~\ref{prop:treepart} for Theorem~\ref{thm:fewlinear}, and Proposition~\ref{prop:newgoodlabel} for Theorem~\ref{thm:fewcst}.

\begin{proposition}\label{prop:treepart} Every oriented tree $T$ with $n\geq 1$ vertices has a vertex partition $V(T)=V_1\cup\ldots \cup V_s$ of non-empty sets, for some $s\in [n]$, such that, for each edge $e\in E(T)$, for some $i\in [s-1]$, $e$ is an edge directed from $V_i$ to $V_{i+1}$.
\end{proposition}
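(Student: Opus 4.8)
The plan is to construct the partition from an integer-valued ``level function'' on the vertices. The desired property is in fact equivalent to the existence of a function $f$ assigning an integer $f(v)$ to each $v\in V(T)$ such that $f(v)=f(u)+1$ for every edge $uv\in E(T)$ and whose set of values is a set of consecutive integers: given such an $f$, one sets $c=\min_{v}f(v)$, $s=\max_{v}f(v)-c+1$, and $V_i=\{v: f(v)=c+i-1\}$ for $i\in[s]$.

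To construct $f$, I would fix an arbitrary root $r\in V(T)$, set $f(r)=0$, and, for $v\neq r$, define $f(v)$ by walking along the unique $r,v$-path in the underlying tree from $r$ to $v$ and adding $+1$ for each edge traversed in the direction of its orientation and $-1$ for each edge traversed against it; this is well defined because $T$ is a tree. The key step is to check that $f(v)=f(u)+1$ for every edge $uv\in E(T)$. Deleting $uv$ splits $T$ into the component $T_u$ containing $u$ and the component $T_v$ containing $v$, and $r$ lies in exactly one of them. If $r\in T_u$, the $r,v$-path is the $r,u$-path followed by the edge $uv$ traversed forwards, so $f(v)=f(u)+1$. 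If $r\in T_v$, the $r,u$-path is the $r,v$-path followed by the edge $uv$ traversed backwards, so $f(u)=f(v)-1$. Either way, $f(v)=f(u)+1$.

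It then remains to observe two things. First, each $V_i$ with $i\in[s]$ is non-empty: walking along the underlying path between a vertex attaining $\min_v f(v)$ and one attaining $\max_v f(v)$, the value of $f$ changes by exactly $\pm1$ at each step and hence takes every intermediate value, so the values of $f$ do form the consecutive block $c,c+1,\ldots,c+s-1$; this is where connectedness of $T$ is used. Second, an edge $uv\in E(T)$ has $u\in V_i$ and $v\in V_{i+1}$ for $i=f(u)-c+1$, and $f(u)\geq c$ together with $f(v)=f(u)+1\leq c+s-1$ gives $i\in[s-1]$, as required. Finally, the $V_i$ are disjoint non-empty sets whose union is $V(T)$, so $s\in[n]$.

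I do not anticipate a genuine obstacle. The two places calling for a little care are the case analysis showing that $f$ is ``graded'' across every edge (and not merely across edges incident to $r$) and the use of the connectedness of a tree to guarantee that the values of $f$ are consecutive, so that the layers $V_1,\ldots,V_s$ are all non-empty. An alternative and essentially equivalent argument is by induction on $n$: delete a leaf $\ell$ with neighbour $u$, apply the induction hypothesis to $T-\ell$, and re-insert $\ell$ into the layer adjacent to the one containing $u$ on the side dictated by the orientation of the edge at $\ell$, shifting the indices by one if this layer would otherwise not exist.
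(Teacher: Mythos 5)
Your proof is correct. The paper argues by induction on $|T|$: by directional duality it assumes $T$ has an out-leaf $t$, deletes it, applies the inductive hypothesis, and then places $t$ in the class immediately after the class containing its in-neighbour (appending a new class if necessary and discarding it if it stays empty) --- which is essentially the inductive alternative you sketch in your final sentence. Your main argument is genuinely different: you construct a global level function $f$ with $f(v)=f(u)+1$ across every edge $uv$, well defined because paths in a tree are unique, graded across every edge by the delete-an-edge component argument, and with consecutive values by the discrete intermediate-value observation along a path from a minimiser to a maximiser of $f$; the layers of $f$ are then the $V_i$. All the steps check out, including the edge case $n=1$ and the verification that $i=f(u)-c+1$ lies in $[s-1]$. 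What your route buys is a structural reformulation (the partition exists if and only if the tree admits such a grading, and the grading is canonical up to an additive constant), at the cost of two small verifications (well-definedness across arbitrary edges and non-emptiness of the layers) that the induction gets for free; the paper's induction is shorter and reuses the directional-duality device it employs elsewhere, but gives less insight into what the classes are.
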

\begin{proof} Noting that the statement is trivially true if $|T|\leqslant 2$, we prove this by induction on $|T|$. Suppose then it is true for all oriented trees with fewer than $n\geq 3$ vertices. We may assume, by directional duality, that $T$ has an out-leaf. Let $T'$ be formed from $T$ by removing such an out-leaf, $t$ say, and let $s\in [n-1]$ be such that there is a vertex partition $V(T')=V_1\cup\ldots \cup V_s$ of non-empty sets, such that, for each edge $e\in E(T')$, for some $i\in [s-1]$, $e$ is an edge directed from $V_i$ to $V_{i+1}$. Let $V_{s+1}=\emptyset$. Let $j$ be such that the in-neighbour of $t$ in $T$ is in $V_j$, and add $t$ to $V_{j+1}$. Taking the non-empty sets from $V_1,\ldots,V_{s+1}$ completes the proof of the inductive step, and hence the proposition.
\end{proof}

\begin{proposition}\label{prop:newgoodlabel}
Every $n$-vertex oriented tree $T$ has labellings $V(T)=\{t_1,\ldots,t_n\}$ and $E(T)=\{e_1,\ldots,e_{n-1}\}$, such that, for every $j\in[n-1]$, there is some $i_1,i_2\in [n]$ with $i_1\leqslant j<i_2$ and $e_j=t_{i_1}t_{i_2}$.
\end{proposition}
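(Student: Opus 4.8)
The plan is to prove this by induction on $n$, in the same spirit as Proposition~\ref{prop:treepart}: remove a leaf, apply the induction hypothesis to the smaller tree, and then reinsert the leaf at the very end of the vertex order. The statement is vacuous when $n=1$, so suppose $n\geq 2$ and that the result holds for all oriented trees on fewer than $n$ vertices.

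Pick any leaf $\ell$ of $T$, with unique neighbour $u$. Since $\ell$ has degree $1$, the edge between $u$ and $\ell$ is directed either into or out of $\ell$; by directional duality we may assume it is the edge $u\ell$. Apply the induction hypothesis to $T':=T-\ell$, an oriented tree on $n-1$ vertices, to obtain labellings $V(T')=\{t_1,\ldots,t_{n-1}\}$ and $E(T')=\{e_1,\ldots,e_{n-2}\}$ with the claimed property (for $n-1$). Extend these by setting $t_n:=\ell$ and $e_{n-1}:=u\ell$, which yields labellings $V(T)=\{t_1,\ldots,t_n\}$ and $E(T)=\{e_1,\ldots,e_{n-1}\}$. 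For $j\leq n-2$, the property is inherited unchanged from $T'$, since any witnesses $i_1\leq j<i_2\leq n-1$ for $e_j$ in $T'$ still lie in $[n]$. For $j=n-1$, writing $u=t_{i_1}$ with $i_1\in[n-1]$, we have $e_{n-1}=u\ell=t_{i_1}t_n$ with $i_1\leq n-1=j<n$, as needed.

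I do not expect a real obstacle here; the only points to be careful about are the orientation of the edge at $\ell$ (handled by directional duality) and checking that appending a single new largest index does not disturb the recursively obtained witnesses, which is immediate. An alternative, non-inductive route would be to fix any topological order $t_1,\ldots,t_n$ of $T$ (which exists as an oriented tree is acyclic), let $C_j$ be the set of edges crossing the cut $(\{t_1,\ldots,t_j\},\{t_{j+1},\ldots,t_n\})$, and verify Hall's condition for the bipartite incidence between $[n-1]$ and $E(T)$: any edge lying in no $C_j$ with $j\in S$ has both endpoints inside the closure of one interval of $[n-1]\setminus S$, and such closures span at most $|[n-1]\setminus S|=n-1-|S|$ edges of $T$ in total since $T$ is a forest, so the edges meeting $\bigcup_{j\in S}C_j$ number at least $|S|$; a system of distinct representatives then gives the edge labelling. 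The induction is shorter, so that is the approach I would take.
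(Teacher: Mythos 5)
Your inductive argument is correct and is essentially identical to the paper's own proof: remove an out-leaf (available after directional duality), label it $t_n$ and its edge $e_{n-1}$, and apply the induction hypothesis to the rest. The verification that the old witnesses survive and that $e_{n-1}=t_{i_1}t_n$ works for $j=n-1$ is exactly the (immediate) check the paper leaves implicit.
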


\begin{proof}
We proceed by induction on $n$, noting the proposition is trivial for $n=1$. For $n>1$, we may assume, by directional duality, that $T$ has an out-leaf. Let $t_n$ be this out-leaf, and $e_{n-1}$ its adjacent edge. By the inductive hypothesis, there are labellings $V(T-t_n)=\{t_1,\ldots,t_{n-1}\}$ and $E(T-t_n)=\{e_1,\ldots,e_{n-2}\}$, such that, for every $j\in[n-2]$, $e_j=t_{i_1}t_{i_2}$ for some $i_1\leqslant j<i_2$. Taking $V(T)=\{t_1,\ldots,t_n\}$ and $E(T)=\{e_1,\ldots,e_{n-1}\}$ completes the proof.
\end{proof}

Finally, in our embeddings we sometimes take small random sets, on which we use a standard Chernoff bound, as follows (see, for example~\cite{alon2004probabilistic}).

\begin{lemma}\label{chernoff} If $X$ is a binomial variable with standard parameters~$n$ and $p$, denoted $X=\mathrm{Bin}(n,p)$, and $\e$ satisfies $0<\e\leq 3/2$, then
\[
\P(|X-\E X|\geq \e \E X)\leq 2\exp\left(-\e^2\E X/3\right).\hfill\qedhere
\]
\end{lemma}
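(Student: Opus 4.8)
The plan is to use the standard exponential-moment (Chernoff) method; indeed one could simply invoke the reference~\cite{alon2004probabilistic}, but I sketch the argument. Write $\mu=\E X=np$. It suffices to prove the one-sided bounds $\P(X\geq(1+\e)\mu)\leq\exp(-\e^2\mu/3)$ and $\P(X\leq(1-\e)\mu)\leq\exp(-\e^2\mu/3)$, since then, as $\{|X-\mu|\geq\e\mu\}=\{X\geq(1+\e)\mu\}\cup\{X\leq(1-\e)\mu\}$, the union bound yields the lemma with its factor~$2$.

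For the upper tail, I would fix a parameter $t>0$ and apply Markov's inequality to $e^{tX}$. Using that $X$ is a sum of $n$ independent $\mathrm{Bin}(1,p)$ variables together with $1+x\leq e^x$, one has $\E[e^{tX}]=(1+p(e^t-1))^n\leq\exp(\mu(e^t-1))$, whence
\[
\P(X\geq(1+\e)\mu)\leq e^{-t(1+\e)\mu}\,\E[e^{tX}]\leq\exp\!\big(\mu(e^t-1-t(1+\e))\big).
\]
Optimising by taking $t=\ln(1+\e)$ gives $\P(X\geq(1+\e)\mu)\leq\exp\!\big(\mu(\e-(1+\e)\ln(1+\e))\big)$, so it only remains to verify the analytic inequality $\e-(1+\e)\ln(1+\e)\leq-\e^2/3$ for $0<\e\leq 3/2$. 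I would prove this by setting $g(\e)=\e-(1+\e)\ln(1+\e)+\e^2/3$ and computing $g(0)=g'(0)=0$ and $g''(\e)=2/3-1/(1+\e)$, which is negative for $\e<1/2$ and positive for $\e>1/2$; hence $g$ first decreases and then increases on $[0,3/2]$, and since $g(0)=0$ and $g(3/2)<0$ we conclude $g\leq0$ throughout.

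For the lower tail, if $\e\geq1$ then $\{X\leq(1-\e)\mu\}\subseteq\{X=0\}$, which has probability $(1-p)^n\leq e^{-\mu}\leq\exp(-\e^2\mu/3)$, the last step using $\e\leq 3/2$ (so $\e^2/3\leq 1$). If $0<\e<1$, I would instead take $t=\ln(1-\e)<0$ in the analogous Markov bound to obtain $\P(X\leq(1-\e)\mu)\leq\exp\!\big(\mu(-\e-(1-\e)\ln(1-\e))\big)\leq\exp(-\e^2\mu/2)$, where the final inequality is the routine estimate $-\e-(1-\e)\ln(1-\e)\leq-\e^2/2$, proved exactly as above using that the corresponding function has nonpositive second derivative on $(0,1)$. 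Summing the two tail bounds completes the proof. The only mildly delicate point is the analytic inequality in the upper tail, since $\e$ is permitted up to $3/2$ rather than the textbook range $\e\leq1$; but as indicated this is dispatched by a short calculus argument, so I anticipate no genuine obstacle.
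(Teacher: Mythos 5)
Your proof is correct. The paper does not prove this lemma at all — it simply quotes it from the cited reference — and your argument is the standard exponential-moment derivation (with the routine calculus check that $\e-(1+\e)\ln(1+\e)\leq-\e^2/3$ extends to $\e\leq 3/2$), so there is nothing to reconcile.
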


\section{Proof of Theorem~\ref{thm:fewlinear}}\label{sect:fewlinear}
In Section~\ref{subsec:reduction_to_bare_paths_directed}, we use the results quoted in Section~\ref{sec:andrewrulesok} to show that it is enough to prove Theorem~\ref{thm:fewlinear} in the case where all bare paths of $T$ are directed. That is, we reduce the proof to showing the following result.

\begin{theorem}\label{thm:fewlineardirected}
There is some $C>0$ such that each $(n+Ck)$-vertex tournament contains a copy of every $n$-vertex oriented tree with $k$ leaves in which every bare path is a directed path.
\end{theorem}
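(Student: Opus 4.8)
The plan is to take a median order of the host tournament and translate the problem into an intricate but elementary layout problem along that order. So let $T$ be an $n$-vertex oriented tree with $k$ leaves in which every bare path is directed, and let $\sigma=v_1,\ldots,v_m$ be a median order of a tournament $G$ on $m=n+Ck$ vertices. Contracting each maximal bare path of $T$ to a single edge, oriented along that (directed) path, produces an oriented tree $T'$ with at most $2k-2$ vertices and at most $2k-3$ edges by Proposition~\ref{prop:deg3}; equivalently, $T$ consists of at most $2k-2$ branch-and-leaf vertices (the vertices of $T'$) joined by at most $2k-3$ internally disjoint directed paths of prescribed lengths (the maximal bare paths of $T$). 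We will embed the branch vertices at carefully chosen positions of $\sigma$ and embed each maximal bare path almost entirely as a block of consecutive vertices of $\sigma$ — this is legitimate since $v_i\to v_{i+1}$ for all $i$ (Lemma~\ref{lm:basic_properties_of_median_orders}~ii)) — with its two ends attached to the images of the two branch vertices it joins. The order in which the branch vertices are placed and the paths routed is prescribed by the layering $V(T')=U_1\cup\ldots\cup U_s$ of Proposition~\ref{prop:treepart}: processing level by level makes every bare path run forwards in $\sigma$, from the (already placed) image of its tail to that of its head.

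The embedding proceeds in two phases. In Phase~1 we embed only the at most $2k-2$ branch vertices and, for each maximal bare path, a bounded initial segment at its tail, while merely reserving the intervals of $\sigma$ that will hold the long interiors of the bare paths. The key feature of Phase~1 is that only $O(k)$ vertices of $G$ are ever occupied, so whenever we must run forwards from the image $\phi(t)$ of an already-placed branch vertex to a freshly selected position $z$, we pass to the induced median order on $[\phi(t),z]$ (a median order by Lemma~\ref{lm:basic_properties_of_median_orders}~i)) and apply Lemma~\ref{lm:connecting_path_length_3} with the few already-used vertices in between as the forbidden set, obtaining a length-$3$ directed path that forms the tail segment of the corresponding bare path. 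This device is exactly what allows a branch vertex to be the tail of several bare paths at once: the interior of the $j$-th of them is reserved beyond the reservations of the first $j-1$, and its length-$3$ launch hop jumps over those still-empty reservations, so the forbidden set has size only $O(s')$, where $s'$ is the size of whatever has already been laid out in between — which is linear in the length of the interval available, once each branch vertex's footprint in the layout is padded by a constant number of spare vertices. Bare paths of length $1$ or $2$ are too short for a length-$3$ hop; for these we instead reserve, directly after $\phi(t)$, enough out-neighbours of $\phi(t)$ (which are densely available there by Lemma~\ref{lm:basic_properties_of_median_orders}~ii)) to serve as the heads of the short bare paths leaving $t$, treating those heads' subtrees recursively. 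In Phase~2 we fill the reserved intervals: a maximal bare path whose two ends already occupy the two endpoints of a block of free vertices of the correct size is completed simply by following consecutive edges of $\sigma$, routing around (at a cost of $O(1)$ vertices each) the $O(k)$ isolated vertices that Phase~1 hops may have placed inside the reservations.

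Tracking the budget shows $n+Ck$ vertices suffice for some $C<500$. The interiors of the bare paths account for exactly $n-|V(T')|$ vertices and the branch vertices for $|V(T')|$, leaving the slack $Ck$ to absorb: the per-branch-vertex padding required for Lemma~\ref{lm:connecting_path_length_3}, which sums to $O(\sum_i|U_i|)=O(k)$; the at most $2k-3$ launch hops, each of which may waste $O(1)$ vertices when a prescribed bare-path length does not exactly fit a hop together with a monotone tail; the $O(k)$ vertices lost to routing around the holes punched by those hops; the reserved out-neighbour pools for bare paths of length $1$ or $2$; and a buffer of $O(k)$ vertices kept near the end of $\sigma$ so that out-neighbours remain available wherever needed. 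Degenerate cases, where $T'$ or indeed $T$ itself is small, are disposed of directly by Corollary~\ref{cor:DHbound}, and Theorem~\ref{thm:DH} is invoked wherever a portion of $T$ being embedded is an arborescence that must be rooted at a prescribed vertex of $\sigma$.

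The main obstacle is this layout bookkeeping, and in particular the insistence that the total slack be $O(k)$ rather than $O(k^2)$. This is what forces the two-phase structure: the robust connecting lemma, Lemma~\ref{lm:connecting_path_length_3}, may be applied only while $O(k)$ vertices are embedded, so that its forbidden set is small relative to the (as yet empty) interval it must cross. It is also what makes high-degree branch vertices, and bare paths of length $1$ and $2$, genuinely awkward, since for these one cannot simply route forwards with a length-$3$ detour but must argue directly from the defining property of median orders.
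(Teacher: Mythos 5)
Your overall architecture matches the paper's: strip out the long directed bare paths to leave an $O(k)$-sized forest, order everything via Proposition~\ref{prop:treepart}, embed the small pieces into intervals of a median order, and realise the long paths in the gaps between intervals, using Lemma~\ref{lm:connecting_path_length_3} for short connections. The gap is in your Phase~2, which is where the entire difficulty of the theorem lives. You propose to realise each long bare path as a consecutive run $v_i\to v_{i+1}\to\cdots$ through its own reserved block, ``routing around'' already-used vertices at a cost of $O(1)$ each. Two things break. First, the run must terminate at the prescribed image $\phi(h)$ of the head branch vertex; a head with in-degree at least $2$ in the contracted tree can be the immediate successor of at most one reserved block, so for every other path into $h$ you need a landing connection analogous to your launch hop, which you never describe --- the premise that ``the two ends already occupy the two endpoints of a block of free vertices'' fails there. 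Second, and more seriously, each path must have an exactly prescribed length while only $O(1)$ spare vertices are available to it. Every detour around an obstruction (a length-$3$ hop across a stretch of the order via Lemma~\ref{lm:connecting_path_length_3}) strictly shortens the path relative to the consecutive run it replaces, by an amount that depends on where the obstruction sits and on which edges of $G$ happen to be present, and a monotone run through an interval can never be lengthened to compensate. So the blocks cannot be pre-sized and the lengths do not come out exactly right; this is not a bookkeeping detail but a genuine circularity in the consecutive-run scheme.

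The paper resolves precisely this with Lemma~\ref{lm:many_connecting_paths}: rather than giving each path its own interval, all $r$ paths are grown simultaneously and vertex-by-vertex through a shared interval by an adaptation of the Dross--Havet greedy arborescence procedure (so each embedded vertex contributes exactly one unit of length and the lengths are exact by construction), and each path is hooked onto its prescribed endpoint by one application of Lemma~\ref{lm:connecting_path_length_3} once only three edges of it remain. The price is the counting argument (Claim~\ref{clm:sizeIiF}) showing the greedy procedure fails on at most $3r$ vertices, which is exactly what brings the total slack down to $O(k)$ rather than $O(k^2)$. Without something playing the role of this lemma, your Phase~2 does not go through.
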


To prove Theorem~\ref{thm:fewlineardirected}, we first remove $O(k)$ long directed paths from $T$ to leave a forest with size linear in $k$. The components of this forest we embed into carefully chosen intervals of a median order with $O(k)$ spare vertices in total, using Corollary~\ref{cor:DHbound}. It remains then to embed the long directed paths, where we only have a constant number of spare vertices per path. This we do with Lemma~\ref{lm:many_connecting_paths} in Section~\ref{subsec:parallel_directed_paths}.
A simple modification of Dross and Havet's procedure for embedding arborescences into median orders (which they used to prove Theorem~\ref{thm:DH}) allows directed paths from specified first vertices to be embedded efficiently into a median order. To embed such paths with both endvertices specified, we adapt this procedure, using it to embed most of the directed paths, but, as soon as all but three edges of any path are embedded, using Lemma~\ref{lm:connecting_path_length_3} to connect the path to its desired last vertex. This allows us to find a set of directed paths while having only constantly many spare vertices per path (see Lemma~\ref{lm:many_connecting_paths}), which we use to prove Theorem~\ref{thm:fewlineardirected} in Section~\ref{subsec:proof_bare_paths_directed}.

\subsection{Reduction to trees with only directed bare paths}\label{subsec:reduction_to_bare_paths_directed}
 To prove Theorem~\ref{thm:fewlinear} from Theorem~\ref{thm:fewlineardirected}, we take a tree $T$, remove most of the middle section of the maximal bare paths with at least 6 blocks, and duplicate each new leaf created by this removal. (Here, a \emph{duplicated vertex} is a new vertex with exactly the same in- and out-neighbourbood as the matching original vertex.) Calling the resulting forest $T'$, if we have an embedding of $T'$ then the duplication of a leaf gives us two options to embed the original vertex from $T$. This will allow us to use the results in Section~\ref{sec:andrewrulesok} to embed the deleted path given enough other vertices in the tournament (with no further restriction on these other vertices). 
 
 Not every maximal bare path in $T'$ will be directed, but each such path will have at most 5 blocks. Adding a dummy leaf at any vertex in two blocks will give a forest $T''$ containing $T'$ whose maximal bare paths are all directed, allowing us to apply Theorem~\ref{thm:fewlineardirected} to each component. Importantly, $T'$, and hence $T''$, will still have $O(k)$ leaves.

\begin{proof}[Proof of Theorem~\ref{thm:fewlinear} from Theorem~\ref{thm:fewlineardirected}] Using Theorem~\ref{thm:fewlineardirected}, let $C\geqslant 8$ be large enough that, for every $\bar{n}$ and $\bar{k}$, every  $(\bar{n}+(C-8)\bar{k})$-vertex tournament contains a copy of every $\bar{n}$-vertex oriented tree with (at most) $9\bar{k}$ leaves in which every bare path is a directed path. Let $G$ be an $(n+Ck)$-vertex tournament, and let $T$ be an $n$-vertex oriented tree with $k$ leaves.

For the appropriate $r$, let $P_1,\ldots,P_{r}$ be the maximal bare paths in $T$, and label vertices such that, for each $i\in [r]$, $P_i$ is an $x_i,y_i$-path. By Proposition~\ref{prop:deg3}, we have $r\leqslant 2k-3$.
 Let $I\subset [r]$ be the set of $i\in [r]$ such that $P_i$ has at least 6 blocks.

For each $i\in I$, let $P_i^{(1)}$ and $P_i^{(2)}$ be the first two blocks of $P_i$ from $x_i$, and let $P_i^{(3)}$ and $P_i^{(4)}$ be the first two blocks of $P_i$ from $y_i$. Let $e_i^{(1)}$ be the furthest edge of $P_i^{(2)}$ from $x_i$ on $P_i$, and let $e_i^{(2)}$ be the furthest edge of $P_i^{(4)}$ from $y_i$ on $P_i$. Let $Q_i=(P_i-\sum_{j=1}^4 P_i^{(j)})+e_i^{(1)}+e_i^{(2)}$.

Note that, for each $i\in I$, the first and last block of $Q_i$ have length 1, its endvertices have degree 2 in $T$, and it has at least 4 blocks (and thus length at least 4). Label vertices so that, for each $i\in I$, $Q_i$ is a $u_i,v_i$-path. Let $T'$ be the forest formed from $T$ by, for each $i\in I$, deleting $Q_i$ and creating two new vertices, $u_i'$ and $v_i'$, so that $u_i'$ is a duplicate of $u_i$ and $v_i'$ is a duplicate of $v_i$. Note that $u_i,u_i',v_i$ and $v_i'$ are all leaves of $T'$.

Let $B$ be the set of vertices with degree 2 in $T'$ with either no in-neighbour or no out-neighbour, so that they lie in the intersection of two (consecutive) blocks. Observe that each such vertex must lie on some path $P_i$, $i\in [r]\setminus I$, or on $P_i^{(1)}\cap P_i^{(2)}$ or $P_i^{(3)}\cap P_i^{(4)}$ for some $i\in I$. Therefore, $|B|\leqslant 4(r-|I|)+2|I|$. Now, form $T''$ from $T'$ by taking each $v\in B$ and adding a new out-neighbour as a leaf, calling the new vertex $u_v$. We note here that all bare paths of $T''$ are directed paths.

Note that, if $\bar{T}$ is a component of $T'$, and $q$ is the number of paths $Q_i$ adjacent to $\bar{T}$ that are deleted when forming $T'$ from $T$, then $\bar{T}$ has at most $k-q+2q \leqslant k+|I|$ leaves. Furthermore, $T'$ has in total  $n+2|I|-\sum_{i\in I}(|Q_i|-2)\leqslant n+2|I|-3|I|=n-|I|$ vertices. Therefore, as $r\leqslant 2k-3$, each component of $T''$ has at most $k+|I|+|B|\leqslant 9k$ leaves and $T''$ in total has at most $n-|I|+|B|\leqslant n+8k$ vertices. Iteratively and vertex-disjointly, embed as many different components from $T''$ into $G$ as possible. If a component of $T''$, say a tree $\bar{T}$ with $\bar{n}$ vertices and $\bar{k}$ leaves,  is left unembedded then there are at least 
\[
|G|-(|T''|-|\bar{T}|)\geq (n+Ck)-(n+8k)+\bar{n}\geq \bar{n}+(C-8)k
\]
vertices not used in the embedding, and $\bar{k}\leq 9k$. Thus, by the choice of $C$, we can embed $\bar{T}$ using the unused vertices in $G$, a contradiction. Thus, $G$ contains a copy of $T''$, $S''$ say. 

For each $v\in B$, delete the copy of $u_v$ from $S''$, and let the resulting copy of $T'$ be $S'$. Note that, as $C\geqslant 8$ and $|I|\leqslant2k-3$,
\[
|V(G)\setminus V(S')|= n+Ck-|T'|=n+Ck-\Big(n+2|I|-\sum_{i\in I}(|Q_i|-2)\Big)\geqslant \sum_{i\in I}(|Q_i|-2),
\]
 and take vertex disjoint sets $A_i$, $i\in I$, in $V(G)\setminus V(S')$ with $|A_i|=|Q_i|-2$ for each $i\in I$.

 For each $i\in I$, let $\bar{u}_i,\bar{u}_i',\bar{v}_i,\bar{v}_i'$ be the copy of $u_i,u_i',v_i,v_i'$ respectively in $S'$. Using Theorem~\ref{thm:connecting_non-directed_paths}, for each $i\in I$, find a copy of $Q_i$, say $R_i$, in $G[A_i\cup\{\bar{u}_i,\bar{u}_i',\bar{v}_i,\bar{v}_i'\}]$ starting at $\bar{u}_i$ or $\bar{u}_i'$ and ending at $\bar{v}_i$ or $\bar{v}_i'$.
 Take then $S'$, and, for each $i\in I$, delete from $T'$ any vertices in $\{\bar{u}_i,\bar{u}_i',\bar{v}_i,\bar{v}_i'\}$ which are not an endvertex of $R_i$ and add the path $R_i$. Note that this gives a copy of $T$.
\end{proof}

\subsection{Joining vertex pairs with directed paths disjointly}\label{subsec:parallel_directed_paths}
We now connect multiple pairs of vertices with directed paths, where the start vertex for each path lies in a set $B_1$, and the end vertex lies in another set $B_2$, and the vertices of $B_1$ come before the vertices of $B_2$ in a median order. With Lemma~\ref{lm:connecting_path_length_3} we can find such paths; the challenge here is to find these paths when they collectively must use almost all of the intermediate vertices in the median order. To do this, we find most of the paths using a procedure of Dross and Havet~\cite{DRO-HAV} for embedding arborescences, modifying it with Lemma~\ref{lm:connecting_path_length_3} to attach each path to the correct end vertex when most of the path has been found.

\begin{lemma}\label{lm:many_connecting_paths}
Let $G$ be an $(m_0+m_1+m_2)$-vertex tournament, and suppose $\sigma=v_1,\ldots,v_{m_0+m_1+m_2}$ is a median order of $G$. Let $B_1\subseteq V(G)$ be the first $m_1$ vertices of $G$ according to $\sigma$, let $B_2\subseteq V(G)$ be the last $m_2$ vertices of $G$ according to $\sigma$, and let $B_0=V(G)\setminus(B_1\cup B_2)$. Let $(x_1,\ldots,x_r)\in B_1^r$ and $(y_1,\ldots,y_r)\in B_2^r$. For each $i\in [r]$, let $\ell_i\geqslant 5$.
Suppose finally that
\begin{equation}\label{eqn:m0bound}
m_0\geqslant m_1+m_2+\sum_{i\in [r]}\ell_i+22r-15.
\end{equation}
 Then, there are internally vertex-disjoint directed paths $P_1,\ldots,P_r$ in $G$ such that, for each $i\in [r]$, $P_i$ is a directed $x_i,y_i$-path with length $\ell_i$ and internal vertices in $B_0$.
\end{lemma}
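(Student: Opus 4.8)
The plan is to adapt the greedy procedure of Dross and Havet that underlies Theorem~\ref{thm:DH}. Recall that this procedure embeds an out-arborescence into a median order by processing its vertices in an order extending the arborescence order, always placing the image of a vertex at the first not-yet-used out-neighbour (in the median order) of the image of its parent; Lemma~\ref{lm:basic_properties_of_median_orders}~ii) guarantees such a choice exists, and moreover that the rightmost used vertex of $\sigma$ advances only slowly, with the total number of ``wasted'' vertices bounded by one per leaf. I would apply this to the out-forest $F$ with $r$ components, the $i$-th being a directed path of length $\ell_i-3$ rooted at $x_i$ — except that, as soon as a path has been built up to its last three edges, I stop extending it and finish it off to $y_i$ using Lemma~\ref{lm:connecting_path_length_3}. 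Here the three crucial features of condition~\eqref{eqn:m0bound} come in: the $\sum_i\ell_i$ term pays for the path vertices; the $22r$ slack pays for the $O(1)$ vertices ``wasted'' per path (once from the greedy restart, and once, multiplied by the factor $6$, from the single use of Lemma~\ref{lm:connecting_path_length_3}); and the $m_1$, $m_2$ terms pay for the vertices of $B_1$ and $B_2$ that cannot be used as internal vertices of the $P_i$.

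In more detail: I process the paths in a suitable order, and for each $i$ in turn I first build the body of $P_i$ by starting at $x_i$ and repeatedly taking the first unused out-neighbour of the current endpoint lying in $B_0$, for $\ell_i-3$ steps — the first step, from $x_i\in B_1$, landing in $B_0$ because, by Lemma~\ref{lm:basic_properties_of_median_orders}~ii), $x_i$ dominates at least half of the vertices of $B_0$, while later steps stay in $B_0$ since they are taken from vertices already in $B_0$. I then finish $P_i$: writing $w_i=v_s$ for its current endpoint and $y_i=v_t$ (so $s<t$, as $w_i\in B_0$ and $y_i\in B_2$), I note by Lemma~\ref{lm:basic_properties_of_median_orders}~i) that $v_s,v_{s+1},\ldots,v_t$ is a median order of $G[\{v_s,\ldots,v_t\}]$, and apply Lemma~\ref{lm:connecting_path_length_3} to it with forbidden set equal to the previously-used vertices lying strictly between $v_s$ and $v_t$, together with the at most $m_2$ vertices of $B_2$ before $v_t$ (which may not serve as internal vertices). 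This yields a directed $w_i,y_i$-path of length $3$ with internal vertices in $B_0$, completing $P_i$. One verifies the bookkeeping closes: the body vertices of $P_i$ lie before $w_i$ in $\sigma$, the procedure keeps the frontier inside $B_0$ and advancing by only $O(1)$ per path (plus $O(1)$ per vertex of $B_1$ used as a root), so the interval $[w_i,y_i]$ is long relative to its forbidden set — and this is exactly what~\eqref{eqn:m0bound} is arranged to guarantee, given the factor-$6$ loss in Lemma~\ref{lm:connecting_path_length_3}.

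The main obstacle is making this adaptation precise, i.e.\ the bookkeeping: one must control the position of $w_i$ at the moment each path is finished so that $[w_i,y_i]$ is long enough for Lemma~\ref{lm:connecting_path_length_3}, and bound the cumulative number of vertices wasted by the $r$ greedy restarts and the $r$ connecting-path steps by the slack $22r$ in~\eqref{eqn:m0bound} (keeping careful track of the roles of $m_1$ and $m_2$, and of the fact that the body of each $P_i$ precedes $w_i$ in $\sigma$ so that it does not pollute the finishing intervals of later paths). By contrast, the fact that each individual step of the procedure can be carried out — each greedy extension and each application of Lemma~\ref{lm:connecting_path_length_3} — is routine given Lemma~\ref{lm:basic_properties_of_median_orders}.
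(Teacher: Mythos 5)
Your high-level strategy matches the paper's (a Dross--Havet-style greedy sweep to build most of each path, finished off with Lemma~\ref{lm:connecting_path_length_3}), but the proposal has a genuine gap at exactly the point you defer as ``bookkeeping''. The assertion that ``the frontier advances by only $O(1)$ per path'' is the entire content of the lemma, and nothing in your argument supports it. When the greedy walk for $P_i$ sits at a vertex $v_s$ whose next several successors are occupied by earlier paths, its first unused out-neighbour may lie far to the right: Lemma~\ref{lm:basic_properties_of_median_orders}~ii) only guarantees that the number of \emph{unused in-neighbours} skipped is at most the number of used vertices jumped over, so each path can waste a number of vertices comparable to the total length of the previously built paths, not $O(1)$. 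This is why the paper does \emph{not} build the paths sequentially: it runs a single left-to-right sweep of the median order that advances all $r$ paths in an interleaved fashion, and then bounds the total number of ``failed'' vertices by $3r$ via the active-vertex/maximal-interval counting argument (Claim~\ref{clm:sizeIiF} and the disjointness of the sets $I_j$). That argument relies on the synchronisation of the sweep with $\sigma$ --- at the stage a vertex fails, every out-neighbour of the relevant active vertex in the intervening interval is already hit --- and it does not transfer to a path-by-path build, where out-neighbours further right are unused simply because later paths have not been started yet. You would need either to adopt the interleaved procedure or to supply a genuinely new waste bound for the sequential one; neither is routine.

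A secondary problem is your treatment of the endpoints. You propose to connect the frontier $w_i$ directly to $y_i\in B_2$ by Lemma~\ref{lm:connecting_path_length_3} with the vertices of $B_2$ preceding $y_i$ placed in the forbidden set $A$. That lemma needs $|A|\leq(n-8)/6$, so forbidding up to $m_2-1$ vertices of $B_2$ forces the connecting interval to have length at least about $6m_2$, which \eqref{eqn:m0bound} does not provide (it only budgets $m_2$ of slack). The paper avoids this by first choosing proxy endpoints $y_i'\in N^-(y_i)$ inside a buffer $B_2'\subseteq B_0$ of the last $m_2+2r-2$ vertices of $B_0$, so that the length-$3$ connections never have to dodge $B_2$ at all; a symmetric buffer $B_1'$ and proxies $x_i'\in N^+(x_i)$ handle the starts. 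This part is fixable, but it needs to be set up before the main procedure rather than absorbed into the forbidden set.
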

\begin{proof}
Let $B_1'$ be the first $(m_1+2r-2)$ vertices of $B_0$ according to $\sigma$, and let $B_2'$ be the last $(m_2+2r-2)$ vertices of $B_0$ according to $\sigma$. Choose $X'=\{x_1',\ldots,x_r'\}\subseteq B_1'$ such that $x_i'\in N^+(x_i)$ for each $i\in[r]$. This is possible as, if for $i\in [r]$ we have chosen $x_1',\ldots,x_{i-1}'$, letting $U_i=\{w\in B_1:x_i<_\sigma w\leqslant_\sigma v_{m_1}\}$, then Lemma~\ref{lm:basic_properties_of_median_orders}~ii) gives
\begin{align*}
|N^+(x_i,B_1')\setminus\{x_1',\ldots,x_{i-1}'\}|&=|N^+(x_i,U_i\cup B_1')\setminus(U_i\cup\{x_1',\ldots,x_{i-1}'\})|\\
&\geqslant\frac{|U_i|+|B_1'|}{2}-|U_i|-|\{x_1',\ldots,x_{i-1}'\}|=\frac{|B_1'|-|U_i|}{2}-(i-1)\\
&\geqslant\frac{(m_1+2r-2)-(m_1-1)}{2}-(r-1)>0.
\end{align*}
Similarly, choose $Y'=\{y_1',\ldots,y_r'\}\subseteq B_2'$ such that $y_i'\in N^-(y_i)$ for each $i\in[r]$.

Let $A$ be a digraph formed by taking the disjoint union of directed paths $Q_i$, $i\in[r]$, where $Q_i$ has length $\ell_i-5$ for each $i\in [r]$. For $i\in[r]$, let $b_i$ be the first vertex and $c_i$ be the last vertex of $Q_i$. Note that $A$ has $\sum_{i\in[r]}(\ell_i-4)$ vertices.

Let $n_1=m_0-m_2-20r+13$. We now give a procedure which produces a partial embedding $\phi$ of $A$ into $G[\{v_{m_1+1},\ldots,v_{m_1+n_1}\}]$. Throughout, if a vertex $v_j$ of $G$ is the image of a vertex of $A$, we say that it is \emph{hit} and denote its pre-image by $a_j\in V(A)$. The sets $W_j$ record vertices of $G$ already used for the last two internal vertices of the paths $P_1,\ldots,P_r$ found by stage $j$.

\begin{itemize}
    \item Initially, set $W_{m_1+1}=\emptyset$ and $\phi(b_i)=x_i'$ for each $i\in[r]$ (so that $x_1',\ldots,x_r'$ are hit).
    \item For $j=m_1+1$ to $m_1+n_1$ in turn, do the following.
    \begin{enumerate}[label=(\alph*)]
      \item If $v_j$ is hit and $a_j=c_i$ for some $i\in[r]$, then, if possible, let $w_{i,1},w_{i,2}\in\{v_{j+1},\ldots,v_{m_1+m_0}\}\setminus (W_j\cup Y')$ be such that $w_{i,1}$ and $w_{i,2}$ are not yet hit, and $v_j\rightarrow w_{i,1}\rightarrow w_{i,2}\rightarrow y_i'$ in $G$. Set $W_{j+1}=W_j\cup\{w_{i,1},w_{i,2}\}$. If it is not possible to find such a $w_{i,1}$ and $w_{i,2}$, then simply set $W_{j+1}=W_j$.
      \item If $v_j$ is hit and $a_j\notin\{c_1,\ldots,c_r\}$, then extend $\phi$ if possible by assigning the first not-yet-hit out-neighbour of $v_j$ in $\{v_{j+1},\ldots,v_{m_1+n_1}\}\setminus W_j$ to the out-neighbour of $a_j$ in $A$. Set $W_{j+1}=W_j$.
    \item If $v_j\in W_j$, then set $W_{j+1}=W_j$.
    \item If $v_j\notin W_j$ and $v_j$ is not hit, then say that $v_j$ is \emph{failed}. Set $W_{j+1}=W_j$.
    \end{enumerate}
\end{itemize}

Note that, for each $m_1+1\leq j\leq m_1+n_1$, the vertices in $W_j$ are never hit, so that this procedure is well-defined.
We first show that the paths with length 3 in (a) are always found, as follows.
\begin{claim}\label{clm:wgood}
For each $m_1+1\leqslant j\leqslant m_1+n_1$, if $v_j$ is hit and $a_j=c_i$ for some $i\in[r]$, then the procedure finds vertices $w_{i,1}$ and $w_{i,2}$ as described in (a).
\end{claim}
\renewcommand\qedsymbol{$\boxdot$}
\begin{proof}[Proof of Claim~\ref{clm:wgood}] Suppose $j$ satisfies $m_1+1\leqslant j\leqslant m_1+n_1$, $v_j$ is hit and $a_j=c_i$ for some $i\in[r]$, so that, at stage $j$, we carry out (a). Let $s$ denote the number of times (a) was carried out before stage $j$.
As $W_j$ contains only vertices found in these previous instances of (a), we have $|W_j|\leq 2s$. 

At stage $j$, each path $Q_i$ has at most one vertex embedded by $\phi$ to $\{v_j,v_{j+1},\ldots,v_{m_1+n_1}\}$. Moreover, if a path $Q_i$ has a vertex embedded by $\phi$ to $\{v_{j+1},\ldots,v_{m_1+n_1}\}$, then (a) has not been carried out for that $c_i$. Thus, at most $r-1-s$ vertices in $\{v_{j+1},\ldots,v_{m_1+n_1}\}$ have been hit. Let $W'$ be the union of $W_j$, $Y'\setminus\{y_i'\}$, and the hit vertices in $\{v_{j+1},\ldots,v_{m_1+n_1}\}$.
Thus, as $s\leq r-1$,
\begin{equation}\label{eqn:Wbound}|W'|\leqslant 2s+(r-1)+ (r-1-s)\leqslant3(r-1).
\end{equation}

Let $j'$ be such that $v_{j'}=y_i'$, and note that, as $y_i'\in B_2'$, $j'\geqslant m_1+m_0-m_2-2r+3$, so that, as $n_1=m_0-m_2-20r+13$, we have
\begin{equation}\label{eq:int}
j'-j+1\geqslant m_1+m_0-m_2-2r+4-m_1-n_1= 18(r-1)+9\geqslant 6|W'|+9.
\end{equation}
Therefore, by Lemma~\ref{lm:connecting_path_length_3}, vertices $w_{i,1}$ and $w_{i,2}$ exist in $\{v_j,v_{j+1},\ldots,v_{j'}\}\setminus(W_j\cup Y')$ which have not yet been hit so that $v_j\to w_{i,1}\to w_{i,2}\to v_{j'}=y_i'$ in $G$.
\end{proof}

If the procedure finds a full embedding of $A$ into $G[\{v_{m_1+1},v_{m_1+2},\ldots,v_{m_1+n_1}\}]$, then observe that, for each $i\in [r]$, the image of $Q_i$ and the path $\phi(c_i)\rightarrow w_{i,1}\rightarrow w_{i,2}\rightarrow y_i'$ together give a path, $P_i'$ say, with length $\ell_i-2$ which is directed from $\phi(b_i)=x_i'$ to $y_i'$. Furthermore, the paths $P_i'$, $i\in [r]$, are vertex-disjoint with vertices in $B_0$. Taking $P_i$ to be the path $x_iP_i'y_i$ for each $i\in[r]$ gives the desired result.

All that remains to show is that the procedure produces a full embedding $\phi$ of $A$. Let $W=W_{m_1+n_1+1}$ and note that $|W|\leqslant 2r$. Assume for a contradiction that the procedure does not yield an embedding of $A$ into $G$. Then the set, $F$ say, of failed vertices in $\{v_{m_1+1},\ldots,v_{m_1+n_1}\}$ has
$|F|>n_1-|A|-|W|$.
Let $U\subseteq V(A)$ be the set of embedded vertices at the end of the procedure. Let $L$ be the set of vertices of $A$ which are the last embedded vertex on some path $Q_i$. Note we have $|L|=r$.

Say a vertex $a\in V(A)$ is \emph{active at stage $j$} if $\phi(a)\in\{v_{m_1+1},\ldots,v_{j-1}\}$ and $a$ has an out-neighbour $b$ that is not embedded in $\{v_{m_1+2},\ldots,v_j\}$ (i.e., either $b$ is not embedded or $\phi(b)\in\{v_{j+1},\ldots,v_{m_1+n_1}\}$). Now, if $v_j\in F$ comes before some vertex in $X'=\{x'_1,\ldots,x'_r\}\subset B_1'$, then it is possible there will be no active vertex at stage $j$. However, because we have assumed that the procedure does not yield an embedding of $A$ into $G$, if $v_j\in F$ and $j\geq 2m_1+2r-1$, then there must be some active vertex at stage $j$, for otherwise all the vertices of $A$ would be embedded in $\{v_{m_1+1},\ldots,v_{j-1}\}$.

Let $\bar{F}=\{v_j\in F:j\geq 2m_1+2r-1\}$, so that, for each $v_j\in \bar{F}$ we can define $r_j$ to be the largest index such that $a_{r_j}$ is active for $j$. Note, by the definition of an active vertex, $r_j<j$.
Furthermore, as $|F|>n_1-|A|-|W|$, $B'_1=\{v_{m_1+1},\ldots,v_{2m_1+2r-2}\}$ contains at least  $r$ vertices in the embedding (those in $X'$), and $|A|=\sum_{i\in [r]}(\ell_i-4)$, we have
\begin{equation}\label{eqn:Fbar}
|\bar{F}|>n_1-|A|-|W|-(m_1+2r-2-r)\geqslant m_0-m_2-20r+13-\sum_{i\in[r]}\ell_i+r-m_1+2\overset{\eqref{eqn:m0bound}}{\geqslant}3r.
\end{equation}

For each $v_j\in \bar{F}$, set $I_j=\{v_i:r_j<i\leqslant j\}$. We now bound from above the number of vertices of $\bar{F}$ in $I_j$, as follows.

\begin{claim}\label{clm:sizeIiF}
If $v_j\in \bar{F}$, then $|I_j\cap F|\leqslant |I_j\cap\phi(L)|+|I_j\cap W|$.
\end{claim}
\begin{proof}[Proof of Claim~\ref{clm:sizeIiF}] Let $J=(I_j\cap N^+(v_{r_j}))\setminus W$. As the out-neighbour of $a_{r_j}$ is never embedded in $I_j$, all the vertices in $J$ must be hit by the start of stage $r_j$. Thus, as $F\cap W=\emptyset$, we have $I_j\cap F\subseteq I_j\cap N^-(v_{r_j})$, so that
\begin{equation}\label{eq:interval_bound_1}
    |I_j\cap F|\leqslant |I_j\cap N^-(v_{r_j})|.
\end{equation}

Now, let $A_{r_j}$ and $A_{j-1}$ be the sub-digraphs of $G[v_{m_1+1},\ldots,v_j]$ which are the image of the partial embedding $\phi$ at the end of stage $r_j$ and stage $j-1$, respectively, restricted to the vertex set $\{v_{m_1+1},\ldots,v_j\}$. Observe the following.
\begin{itemize}
    \item Each vertex of $J$ is the last vertex of a path of $A_{r_j}$, as it is hit by the end of stage $r_j$ and occurs later in $\sigma$ than $r_j$.
    \item Any vertex in $I_j$ which is the last vertex of some path of $A_{j-1}$ must be the image of some $c_i$, for otherwise it is active for $j$, contradicting the definition of $r_j$. Thus, because $L$ is the set of vertices of $A$ which are the last embedded vertex on some path $Q_i$, such a vertex is in $I_j\cap \phi(L)$.
    \item As $r_j\leq j-1$, $A_{r_j}\subset A_{j-1}$, and $V(A_{j-1})\setminus V(A_{r_j})\subseteq I_j$, so $A_{j-1}$ must have at least as many paths terminating in $I_j$ as $A_{r_j}$ does.
\end{itemize}
Combining these three observations we have $|J|\leqslant |I_j\cap \phi(L)|$, and hence
\begin{equation}\label{eq:interval_bound_2}
    |I_j\cap N^+(v_{r_j})|\leqslant |I_j\cap\phi(L)|+|I_j\cap W|.
\end{equation}
Now, by Lemma~\ref{lm:basic_properties_of_median_orders}~ii), $|I_j\cap N^-(v_{r_j})|\leqslant|I_j\cap N^+(v_{r_j})|$. Together with (\ref{eq:interval_bound_1}) and (\ref{eq:interval_bound_2}), this proves the claim.
\end{proof}
\renewcommand\qedsymbol{$\square$}

Let $M$ be the set of indices $j$ such that $v_j\in\bar{F}$, and $I_j$ is maximal for inclusion among the sets $I_i$, with $v_i\in\bar{F}$. We will show that the sets $I_j$, $j\in M$ are disjoint.
If $i,j\in M$ with $i<j$ and $I_i\cap I_j\neq \emptyset$, then we have $r_j< i$. Observe that, as $a_{r_j}$ is active for $j$ and $\phi(a_{r_j})\in \{v_0,\ldots,v_{i-1}\}$, $a_{r_j}$ is also active for $i$, and hence $r_i\geqslant r_j$. Thus, $I_i\subset I_j$ and, as $i<j$, $I_i\neq I_j$, and hence $I_i$ is not maximal for inclusion among the sets $I_{i'}$, with $v_{i'}\in\bar{F}$, a contradiction.

Since $v_j\in I_j$ for all $v_j\in\bar{F}$, we have $\bar{F}\subseteq \cup_{j\in M} I_j$. As the sets $I_j$, $j\in M$, are pairwise disjoint, $|\bar{F}|\leqslant\sum_{j\in M}|I_j\cap F|$. By Claim~\ref{clm:sizeIiF}, we therefore obtain
\begin{equation*}
    |\bar{F}|\leqslant\sum_{j\in M}|I_j\cap F|\leqslant \sum_{j\in M}\left(|I_j\cap\phi(L)|+|I_j\cap W|\right)\leqslant|\phi(L)|+|W|\leqslant 3r,
\end{equation*}
contradicting~\eqref{eqn:Fbar}. This completes the proof of the lemma.
\end{proof}

\subsection{Proof of Theorem~\ref{thm:fewlineardirected}}\label{subsec:proof_bare_paths_directed}
Given Lemma~\ref{lm:many_connecting_paths} it is now straight-forward to prove Theorem~\ref{thm:fewlineardirected}. Given an $n$-vertex oriented tree $T$ with $k$ leaves whose maximal bare paths are directed, we label such paths with length at least 5 as $P_1,\ldots,P_r$, for the appropriate $r$ (which, by Proposition~\ref{prop:deg3}, satisfies $r=O(k)$). We can then consider $T$ to be formed of small vertex-disjoint subtrees $T_1,\ldots,T_{r+1}$ connected by the paths $P_1,\ldots,P_r$. We use Proposition~\ref{prop:treepart} to group these subtrees into classes, with the classes ordered so that each path $P_i$ goes from some class to the next class. Given then a tournament $G$ with $n+50k$ vertices, we divide a median order into intervals, with one interval for each class of subtrees and one for the set of paths between each pair of consecutive classes (see \eqref{eqn:splitorder}). Then, we then use Corollary~\ref{cor:DHbound} to embed the subtrees $T_i$ into their interval in the median order before using Lemma~\ref{lm:many_connecting_paths} to embed the paths $P_i$ with interior vertices in their interval in the median order.

\begin{proof}[Proof of Theorem~\ref{thm:fewlineardirected}] We will prove this with $C=50$, so let $\bar{n}=n+50k$.
Let $T$ be an $n$-vertex oriented tree with $k$ leaves in which every bare path is a directed path, and let $G$ be a $\bar{n}$-vertex tournament. Let $B$ be the set of vertices of $T$ which do not have degree 2, so that, by Proposition~\ref{prop:deg3}, $|B|\leqslant2k-2$.  Remove all maximal bare paths of length at least $5$ from $T$. Let $r$ be the number of removed paths, noting that, by Proposition~\ref{prop:deg3}, $r\leqslant 2k-3$, and label these paths as $P_1,\ldots,P_r$ (where we recall $\ell(P_i)$ denotes the length of $P_i$). Say the remaining forest $F$ has component trees $T_1,\ldots,T_{r+1}$, and, for each $i\in [r+1]$, let $k_i$ be the number of leaves of $T_i$ if $|T_i|\geq 2$, and let  $k_i=1$ if $|T_i|=1$. Note that $F$ is a union of $(|B|-1-r)$ maximal bare paths of $T$ with length at most 4 between vertices in $B$, resulting in a forest with $r+1$ components. Thus, we have that $|F|\leqslant|B|+3(|B|-1-r)\leqslant8k-3r-11$. Observing that every leaf or isolated vertex of $F$ is in $B$, we have $\sum_{i\in[r+1]}k_i\leqslant |B|\leq 2k-2$. We also note that
\begin{equation}
|F|=\sum_{i\in[r+1]}|T_i|
\;\;\text{ and }\;\;
    \sum_{i\in[r]}\ell(P_i)=|T|-|F|+r=n-\sum_{i\in[r+1]}|T_i|+r.\label{eqn:sumPi}
\end{equation}

Let $S$ be the oriented tree on vertex set $[r+1]$ with $ij\in E(S)$ whenever there is a directed path from $T_i$ to $T_j$ in $T$. By applying Proposition~\ref{prop:treepart} to $S$, let $s\leqslant r+1$ be such that there is a partition $I_1,\ldots,I_s$ of $[r+1]$ into non-empty sets such that, for each distinct $i,j\in [r+1]$, and $i'\in [s]$, if $i\in I_{i'}$ and there is a directed path from $T_i$ to $T_j$ in $T$, then $i'<s$ and $j\in I_{i'+1}$.
For each $i\in [s-1]$, let $J_i$ be the set of indices $j\in [r]$ such that $P_j$ is directed from $T_{i'}$ to $T_{j'}$ for some $i'\in I_i$ and $j'\in I_{i+1}$, and note that $\cup_{i\in [s-1]}J_i=[r]$.

Let $\sigma=v_1,\ldots, v_{\bar{n}}$ be a median order of $G$. In this median order take consecutive intervals
\begin{equation}\label{eqn:splitorder}
V_1,U_1,V_2,U_2,V_3,\ldots,V_{s-1},U_{s-1},V_s,
\end{equation}
appearing in that order, such that, for each $j\in [s]$,
\begin{align}
    |V_j|&=\Bigg\lceil\frac{3}{2}\sum_{i\in I_j}(|T_i|+k_i)\Bigg\rceil-2|I_j|\leq 
    \frac{3}{2}\sum_{i\in I_j}(|T_i|+k_i)+\frac{1}{2}-2|I_j|,\label{eqn:Ui}
    \end{align}
    and, for each $j\in [s-1]$,
\begin{align}\label{eqn:Vj}
|U_j|&=|V_j|+|V_{j+1}|+\sum_{i\in J_j}\ell(P_i)+22|J_j|-15.
\end{align}
Note that this is possible, as
\begin{align*}
    \sum_{j=1}^s|V_j|+\sum_{j=1}^{s-1}|U_j|&\overset{\eqref{eqn:Vj}}{\leqslant}
     3\sum_{j=1}^s|V_j|+\sum_{j\in [r]}\ell(P_j)+22\sum_{j\in [s-1]}|J_j|-15(s-1)
    \\
    &\overset{\eqref{eqn:Ui}}{\leq} \frac{9}{2}\sum_{i\in [r+1]}(|T_i|+k_i)+\frac{3}{2}s-6\sum_{j=1}^s|I_j|+\sum_{j\in [r]}\ell(P_j)+22r-15(s-1)\\
    &\overset{\eqref{eqn:sumPi}}{\leq}n+r+\frac{7}{2}|F|+\frac{9}{2}\sum_{i\in[r+1]}k_i-6(r+1)+22r\\
    &\leqslant n+\frac{7}{2}(8k-3r-11)+\frac{9}{2}(2k-2)+17r-6\\
    &\leqslant n+37k+\frac{13}{2}r\leqslant n+50k,
\end{align*}
where we have used that $r\leq 2k-3$.
By Corollary~\ref{cor:DHbound} and \eqref{eqn:Ui}, a copy of $\cup_{i\in I_j}T_i$ exists in $G[V_j]$ for each $j\in [s]$. By Lemma~\ref{lm:many_connecting_paths} and \eqref{eqn:Vj}, for each $j\in [s-1]$, the $|J_j|$ paths $P_i$, $i\in J_j$, between $\cup_{i\in I_j}T_i$ and $\cup_{i\in I_{j+1}}T_i$ can then be embedded in the intervals $V_j,U_j,V_{j+1}$ with the appropriate first and last vertex in $V_j$ and $V_{j+1}$, respectively, and internal vertices in $U_j$. This completes the embedding of $T$, and hence the proof of the theorem.
\end{proof}

\section{Proof of Theorem~\ref{thm:fewcst}}\label{sect:fewcst}
As an illustrative case, let us first sketch Theorem~\ref{thm:fewcst} for trees consisting of a directed path between two arborescences, as follows.
Suppose we have a directed path $P$, an in-arborescence $S$ with root the first vertex of $P$, and an out-arborescence $S'$ with root the last vertex of $P$, and suppose that $S\cup P\cup S'$ is an oriented tree with $n$ vertices. Say $S$ has $k$ in-leaves and $S'$ has $k'$ out-leaves, and the tournament $G$ has $m:=n+k+k'-2$ vertices and a median order $v_1,\ldots,v_m$. Using Lemma~\ref{lm:basic_properties_of_median_orders}~i) and Theorem~\ref{thm:DH} (via directional duality), we can embed $S$ into $G[\{v_1,\ldots,v_{|S|+k-1}\}]$ with the root vertex embedded to $v_{|S|+k-1}$. Similarly, we can embed $S'$ into $G[\{v_{m-|S'|-k'+2},\ldots,v_m\}]$ with the root vertex of $S'$ embedded to $v_{m-|S'|-k'+2}$. Finally, by Lemma~\ref{lm:basic_properties_of_median_orders}~ii), we have $v_{|S|+k-1}\to v_{|S|+k}\to \ldots \to v_{m-|S'|-k'+2}$, so we can use this path to embed the $n-|S|-|S'|+2=m-|S|-|S'|-k-k'+4$ vertices of $P$ and complete an embedding of $T$ into $G$.

Essentially, all our embeddings will look like this, where $P$ will be a very long path, but with some additional subtrees and paths found within the interval we use to embed $P$. For example, suppose now the tree $T$ also has a subtree $F$ which shares one vertex, $t$ say, with $S$, where $t$ only has out-neighbours in $F$. If $P$ is a long path (compared to $|F|,|S|,|S'|$) then we can embed $T=F\cup S\cup P\cup S'$ into a tournament $G$ with $m:=|T|+k+k'-2$ vertices as follows. Carry out the above embedding of $S$ and $S'$ into the start and end respectively of a median order $v_1,\ldots,v_m$ of $G$ and note that the path $Q:=v_{|S|+k-1}\to v_{|S|+k}\to \ldots \to v_{m-|S'|-k'+2}$ has $|F|-1+|P|$ vertices. If $s$ is the embedding of $t\in V(S)$, then by Lemma~\ref{lm:basic_properties_of_median_orders}~ii) and as $|Q|\geq |P|-1\gg |F|,|S|$, $s$ will have many out-neighbours in this path, enough that we can easily embed $F-t$ among the out-neighbours of $s$ in $Q$ (using, in particular, Corollary~\ref{cor:DHbound}). However, we wish to do this so that there is a directed path between $v_{|S|+k-1}$ and $v_{m-|S'|-k'+2}$ covering exactly the $|Q|-(|F|-1)=|P|$ vertices of $V(Q)$ which are not used to embed $F-t$. 

To do this, before embedding $F$, we first find a short directed $v_{|S|+k-1},v_{m-|S'|-k'+2}$-path $R$ with vertices in $V(Q)$ so that every vertex in $V(Q)$ has at least one out-neighbour on $R$ occurring after some in-neighbour on $R$. The path $R$ will be short enough that we can embed $F-t$ in the out-neighbours of $s$ in $V(Q)$ while avoiding $V(R)$. Once $F-t$ has been embedded, we slot the remaining vertices in $V(Q)$ into $R$ one by one. This will be possible from the property of $R$ as we are working in the tournament $G$ (see Claim~\ref{clm:Rismall}). Note that, in the language of absorption (as codified by R\"odl, Ruci\'nski and Szemer\'edi~\cite{rodl2006dirac}), $R$ is a path which can absorb any set of vertices from the interval of the median order between its first and last vertex.

More generally, we can embed small trees attached with an out-edge from $S\cup P\cup S'$, as long as the attachment point is not too late in $P$, and also not in $S'$, by embedding such small trees within the interval for the path $P$. Similarly, we can embed small trees attached with an in-edge from $S\cup P\cup S'$, as long as the attachment point is not too early in $P$, and also not in $S$. We can also use Lemma~\ref{lm:connecting_path_length_3} to add short paths between vertices in the interval from $P$ that are not too close together. We therefore decompose any $n$-vertex tree $T$ with $k$ leaves by finding a digraph $D$ which can be built in this way and which contains $T$.

Roughly speaking, we call the digraph $D$ a \emph{good decomposition for $T$} if it contains $T$ and can be built from some $S\cup P\cup S'$ as described above by adding digraphs in these ways; this is defined precisely in Section~\ref{subsec:gooddecomp}. In Section~\ref{subsec:finding_a_good_decomposition}, we show that there is a good decomposition for any tree without a subpath that we could otherwise deal with using Section~\ref{sec:andrewrulesok} as before. Then, in Section~\ref{subsec:embedding_a_good_decomposition}, we show it is possible to embed any good decomposition of any $n$-vertex tree with $k$ leaves into an $(n+k-2)$-vertex tournament. Finally, in Section~\ref{subsec:final}, we put this together to prove Theorem~\ref{thm:fewcst}.

\subsection{\texorpdfstring{$(r,m)$}{(r,m)}-good decompositions}\label{subsec:gooddecomp}
We now define a good decomposition precisely, using the follow definition of a path partition.

\begin{defn} Say a sequence of paths $P_1\ldots P_\ell$ is a \emph{path partition of a path $P$} if $P=\cup_{i\in [\ell]}P_i$ and, for each $i\in [\ell-1]$, the end vertex of $P_{i}$ is the start vertex of $P_{i+1}$, and all the paths are otherwise pairwise vertex disjoint.
\end{defn}

Roughly speaking, as depicted in Figure~\ref{fig:gooddecomp}, an $(r,m)$-good decomposition for a tree $T$ is a digraph $D$ with $T\subseteq D$, such that $D$ can be constructed by taking a long directed path $P$ from the root of an in-arborescence $S_1$ to the root of an out-arborescence $S_{r+1}$, attaching small forests $F_i$ to a limited number of well-separated subpaths $S_i$ of $P$, and, finally, attaching short directed paths $Q_i$ between some of these well-separated subpaths and forests. More precisely, we define an $(r,m)$-good decomposition as follows.

\begin{defn}\label{def:gooddecomp}
Say that a digraph $D$ is an \emph{$(r,m)$-good decomposition} for an $n$-vertex oriented tree $T$ if $V(D)=V(T)$, and, for some distinct $x,y\in V(D)$, there is a directed $x,y$-path $P$ with path partition
\begin{equation}\label{eqn:Ppart}
P=P_1S_2P_2S_3\ldots P_{r-1}S_{r}P_{r},
\end{equation}
an in-arborescence $S_1$ with root $x$, an out-arborescence $S_{r+1}$ with root $y$, and
\begin{itemize}
\item forests $F_i^+$, $F_i^-$, $i\in [r+1]$, and
\item for some $0\leq \ell \leq 2r$, vertices $s_i,t_i$ and directed $s_i,t_i$-paths $Q_i$, $i\in [\ell]$,
\end{itemize}
such that, letting $F_i=F_i^-\cup F^+_i$ for each $i\in [r+1]$, the following hold.
\stepcounter{capitalcounter}
\begin{enumerate}[label = {\bfseries \Alph{capitalcounter}\arabic{enumi}}]
\item $T\subseteq S_1\cup P\cup S_{r+1}\cup (\cup_{i\in [r+1]} F_i)\cup (\cup_{i\in [\ell]}Q_i)=D$.\label{prop1}
\item The following sets, over $i\in [r+1]$ and $j\in [\ell]$, form a partition of $V(T)=V(D)$:\label{prop2}
\[
V(P),\;V(F_i^+)\setminus V(S_i),\; V(F_i^-)\setminus V(S_i),\; V(S_1)\setminus \{x\},\; V(S_{r+1})\setminus \{y\},\; V(Q_j)\setminus \{s_j,t_j\}.
\]
\item For each $i\in [r]$, $P_i$ has length at least $2000m$.\label{prop3}
\item For each $i\in [r+1]$ and $\diamond\in \{+,-\}$, $V(S_i)\subset V(F_i^\diamond)$, $|F_i^\diamond|\leq m$, and $F_i^\diamond$ is a forest in which each component has exactly one vertex in $S_i$, which furthermore has only $\diamond$-neighbours in $F_i^\diamond$.\label{prop4}
\item $E(F_1^-)=E(F_{r+1}^+)=\emptyset$ and $|S_1|,|S_{r+1}|\geq 2$.\label{prop5}
\item The total number of in-leaves of $S_1$ and out-leaves of $S_{r+1}$ is at most the number of leaves of $T$.\label{prop6}
\item For each $i\in [\ell]$, one of the following holds.\label{prop78}
\begin{enumerate}[label = {\bfseries \Alph{capitalcounter}\arabic{enumi}.\arabic{enumii}}]
\item For some $1\leq j<j'\leq r+1$, $Q_i$ is a directed path from $F_j$ to $F_{j'}$ with length $3(j'-j)+1$.\label{prop7}
\item For some $2\leq j\leq r$, $Q_i$ is a directed path with length 3 from $V(F_j^-)\setminus V(S_j)$ to the last vertex of $S_j$.\label{prop8}
\end{enumerate}
\end{enumerate}
\end{defn}

\begin{figure}
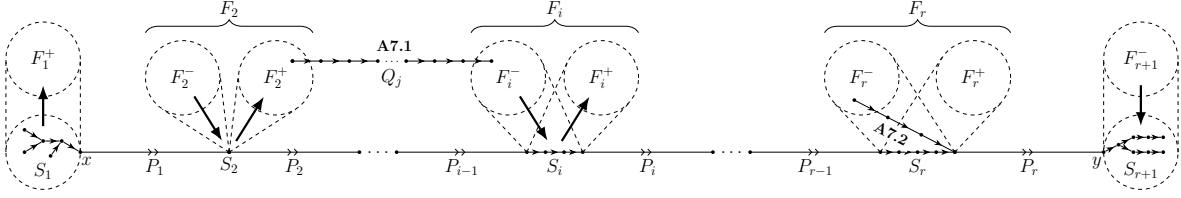

\DiagramGoodDecomposition
\vspace{-0.75cm}
\caption{An $(r,m)$-good decomposition.\label{fig:gooddecomp}}
\end{figure}


\subsection{Finding a good decomposition}\label{subsec:finding_a_good_decomposition}
As noted before, by the results in Section~\ref{sec:andrewrulesok}, we will be able to assume that our $n$-vertex tree $T$ with $k$ leaves in Theorem~\ref{thm:fewcst} mostly consists of directed bare paths. To find a good decomposition, we consider these paths and arrange them in order of decreasing length. Identifying a point where the length of these paths drops significantly (perhaps including all the paths), we show that removing these long paths creates a forest in which each component is much smaller than each of the removed paths. Next, we order these paths and components using Proposition~\ref{prop:newgoodlabel}. Taking (essentially) the removed paths as the paths $P_i$, carefully chosen directed subpaths $S_i$ of the components of the forest (see \ref{case1}--\ref{case4}) and some dummy edges if necessary will form the path in~\eqref{eqn:Ppart}. After the careful selection in \ref{case1}--\ref{case4}, we will be able to divide naturally the rest of $T$ into the other sets in the decomposition.

\begin{lemma}\label{lem:gooddecomp} Let $1/n \ll \mu\ll 1/k$. Let $T$ be an $n$-vertex oriented tree with $k\geq 2$ leaves and no bare path of length at least $\mu n$ with first and last block of length 1 and whose endvertices have degree 2 in $T$. Then, for some $r\leq 10k$ and $m\geq \mu n$, $T$ has an $(r,m)$-good decomposition.
\end{lemma}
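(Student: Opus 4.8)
### Proof proposal for Lemma~\ref{lem:gooddecomp}

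\textbf{Proof proposal.} The plan is to locate inside $T$ the long directed paths that will play the role of the spine pieces $P_1,\dots,P_r$ in~\eqref{eqn:Ppart}, and then to fit everything else around them. First I would list the maximal bare paths of $T$; by Proposition~\ref{prop:deg3} there are at most $2k-3$ of them, and the longest has length at least $(n-1)/(2k-3)\gg\mu n$. From each sufficiently long maximal bare path I extract a longest block (a maximal directed subpath). The hypothesis that $T$ has no long bare path which is a simple connector with both end blocks of length $1$ and both endvertices of degree $2$ --- the residue of the reduction via Section~\ref{sec:andrewrulesok} carried out before the lemma is applied --- is exactly what I would use to guarantee that such an extracted block carries all but a bounded amount of the length of its bare path, so that the ``stubs'' left over on either side are short. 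Sorting the extracted blocks by length and using that there are only $O(k)$ of them, I can find a multiplicative gap (of factor much larger than $2000$) in the list, or else keep all of them. Keeping the blocks above the gap and deleting their interiors from $T$ leaves a forest $F$ whose components all have at most $m$ vertices, for a suitable $m\geq\mu n$ with each kept block of length at least $2000m$; the number of kept blocks is at most $2k-3\leq 10k$, and this is $r$.

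Next I would fix a linear order on these pieces. Form the auxiliary oriented tree whose vertices are the components of $F$ and whose edges record the kept blocks joining them, and apply Proposition~\ref{prop:newgoodlabel} to it to obtain orderings of the components and of the kept blocks consistent with a single left-to-right traversal. This lets me name the components $F_1,\dots,F_{r+1}$ and the blocks (roughly) $P_1,\dots,P_r$ so that $P_i$ runs from $F_i$ to $F_{i+1}$; $F_1$ and $F_{r+1}$ are designated to house the in-arborescence $S_1$ and the out-arborescence $S_{r+1}$, while each intermediate $F_i$ must supply a directed subpath $S_i$ joining the point where $P_{i-1}$ attaches to the point where $P_i$ attaches, so that $P_1S_2P_2\cdots S_rP_r$ is genuinely a directed $x,y$-path.

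The heart of the proof is choosing these $S_i$ (and $S_1,S_{r+1}$) correctly, via a short case analysis (cases (i)--(iv)) on the local orientation of $T$ at the two attachment vertices of each component. When the $T$-path between the attachment vertices is already directed the right way (case (i)), I take it as $S_i$; everything else in the (small) component then hangs off $S_i$ as in-trees and out-trees, which is exactly the forest data $F_i^-,F_i^+$ of Definition~\ref{def:gooddecomp}, their local-sink/local-source condition at $S_i$ being automatic and the size bound $|F_i^{\diamond}|\leq m$ following from the smallness of the component. When the orientation is incompatible --- the connecting $T$-path changes direction, or an attachment vertex is a local source or sink of $T$ forcing the wrong direction --- I add one or two dummy directed edges to $D$ to create a legal $S_i$ and divert the displaced short pieces of $T$ (the awkward stubs and cross-edges) into auxiliary directed paths $Q_j$ of the two permitted shapes in \ref{prop78}: a length-$3(j'-j)+1$ path between $F_j$ and $F_{j'}$ (\ref{prop7}) or a length-$3$ correction onto the last vertex of $S_j$ (\ref{prop8}); at most two such $Q_j$ per component, so $\ell\leq 2r$. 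The extreme components are treated the same way, producing $S_1,S_{r+1}$ as arborescences, enlarging them by one spine vertex if a component is a single vertex so that \ref{prop5} holds and \ref{prop6} is maintained (the in-leaves of $S_1$ and out-leaves of $S_{r+1}$ are distinct leaves of $T$). Finally I would check the bookkeeping: $T\subseteq D$ because we only ever add edges; \ref{prop2} because stubs, decorations, arborescence-interiors and $Q_j$-interiors partition $V(T)\setminus V(P)$ by construction; \ref{prop3} from the gap; \ref{prop4} from smallness; and the remaining items from the above.

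I expect the main obstacle to be exactly this case analysis: reconciling the rigid requirement that the spine $P$ be a \emph{directed} path with the arbitrary orientation of $T$ at the $O(k)$ junction vertices, while simultaneously (a) leaving the decorations in the prescribed local-sink/local-source shape demanded by \ref{prop4}, (b) spending only a bounded number of extra vertices and at most a bounded number of corrective $Q_j$'s per junction, and (c) respecting the leaf budget~\ref{prop6}. The surrounding steps --- counting bare paths with Proposition~\ref{prop:deg3}, the drop/gap argument producing $m$, and the traversal order from Proposition~\ref{prop:newgoodlabel} --- are routine once the case analysis is in place.
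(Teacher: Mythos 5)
Your overall strategy matches the paper's: extract long directed spine pieces, use a multiplicative gap in their sorted lengths to define $m$, order the pieces and leftover components via Proposition~\ref{prop:newgoodlabel}, and run a case analysis at each junction to build the $S_i$, the forests $F_i^{\pm}$ and the corrective paths $Q_j$. However, there is a genuine gap in your first step. You extract only \emph{one} longest block from each long maximal bare path and assert that the hypothesis forces this block to carry all but a bounded amount of the length of its bare path. That is false. The hypothesis forbids long bare subpaths whose first and last blocks have length $1$ and whose endvertices have degree $2$ in $T$; applied to the subpath running from the last edge of the second block to the first edge of the penultimate block, it shows that everything \emph{strictly between} the first two and the last two blocks is short, but it says nothing about those outer blocks themselves. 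For instance, a maximal bare path consisting of exactly two blocks, each of length $n/10$, satisfies the hypothesis (every bare subpath of it with first and last block of length $1$ has length at most $2$). Extracting the longest block leaves the other block, of length $n/10$, inside the residual forest, so the component containing it has far more than $m$ vertices (note $m\leqslant n/2000$, since some kept block of length at least $2000m$ must fit inside $T$). This destroys \ref{prop4}, and with it the later embedding, which places each $F_i^{\diamond}$ inside an interval of length $O(m)$. The fix is to extract \emph{all} maximal directed subpaths of length at least $\mu n$: by the observation above these lie in the first two or last two blocks of their maximal bare paths, so there are at most four per maximal bare path, hence at most $4(2k-3)\leqslant 10k$ in total by Proposition~\ref{prop:deg3}, still within the budget $r\leqslant 10k$. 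This is exactly what the paper does.

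A smaller point: your justification of \ref{prop6}, that the in-leaves of $S_1$ and the out-leaves of $S_{r+1}$ are distinct leaves of $T$, is also not quite right. An in-leaf of the maximal in-arborescence $S_1$ inside its component need not be a leaf of $T$: it may be the attachment point of one of the extracted long paths, or carry out-edges into $F_1^{+}$. The correct argument is that these vertices are precisely the leaves of the subtree $S_1\cup S\cup S_{r+1}$ of $T$, where $S$ is the path in $T$ between the two roots, and a subtree of $T$ has at most as many leaves as $T$ itself.
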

\begin{proof}
We will construct an $(r,m)$-good decomposition using the notation in Definition~\ref{def:gooddecomp}, and confirm that each of \ref{prop1}--\ref{prop78} hold.

Let $p$ be the number of maximal bare paths of $T$, and let them be $T'_1,\ldots,T'_{p}$. By Proposition~\ref{prop:deg3}, we have $p\leq 2k-3$. Observe that each $T'_i$ has fewer than $\mu n$ edges that are not contained in the first two blocks or the last two blocks, for otherwise, taking the last edge of the second block, and the first edge of the penultimate block, and all the edges between them on $T'_i$, gives a bare path with length at least $\mu n$ with first and last block of length 1 whose endvertices have degree 2 in $T$.
Let $q$ be the number of maximal directed bare paths of $T$ with length at least $\mu n$, and let them be $T_1,\ldots,T_q$ with length $\ell_1,\ldots,\ell_q$ respectively, so that $\ell_1\geq \ell_2\geq \ldots \geq \ell_q$. By the above observation, we find $q\leqslant 4p\leqslant 8k-12$, and $|T-T_1-\ldots-T_q|\leqslant(2k-3)(4\mu n+\mu n)\leqslant 10k\mu n$. Furthermore, as $\mu\ll 1/k$, we must have that $q\geq 1$ and $\ell_1\geq n/2q\geq n/20k$.

Now, let $r\in [q-1]$ be the smallest integer such that $\ell_r> 10^6k\ell_{r+1}$, if it exists, and $r=q$ otherwise.  Let $m=\ell_r/2500$. Note that, as $\ell_1\geq n/20k$ and $\mu\ll 1/k$,
\begin{equation}\label{eqn:m}
m\geq \frac{\ell_1}{2500\cdot (10^6k)^{r-1}}\geq \frac{n/20k}{2500\cdot (10^6k)^{8k-12}}\geq\mu n.
\end{equation}
Note that, as $r\leq q$, $r\leq 10k$ and $m\geq \mu n$, as required.
As $T-T_1-\ldots-T_r$ is the union of $T-T_1-\ldots-T_q$ and at most $8k-12$ paths of length at most $\ell_r/10^6k$, we have $|T-T_1-\ldots-T_r|\leqslant 10k\mu n+m/4\leqslant m/2$. Note that $T-T_{1}-\ldots-T_r$ has $r+1$ components. Say these are $R_1,\ldots,R_{r+1}$, and note that $|R_i|\leqslant |T-T_1-\ldots-T_r|\leqslant m/2$ for each $i\in[r+1]$.

Using Proposition~\ref{prop:newgoodlabel}, relabel the components $\{R_1,\ldots,R_{r+1}\}$ and paths $\{T_1,\ldots,T_r\}$, and define functions $i^-,i^+:[r]\to[r+1]$, so that, for every $j\in[r]$, $T_j$ is a directed path from $R_{i^-(j)}$ to $R_{i^+(j)}$, and $i^-(j)\leqslant j<i^+(j)$.

For each $j\in[r]$, label vertices so that $T_j$ is an $x_j',y_j'$-path directed from $x_j'\in V(R_{i^-(j)})$ to $y_j'\in V(R_{i^+(j)})$. Let $I\subseteq \{2,\ldots,r\}$ be the set of $i$ with $y_{i-1}'\in V(R_i)$, $x_i'\in V(R_i)$, and such that the path between $y_{i-1}'$ and $x_i'$ in $R_i$ is not directed from $y_{i-1}'$ to $x_i'$. For each $j\in[r]$, let $Q_j^+$ be the path consisting of the last $3(i^+(j)-j-1)+1\geq 1$ edges of $T_j$. For each $j\in[r]\setminus I$, let $Q_j^-$ be the path consisting of the first $3(j-i^-(j))+1\geq 1$ edges of $T_j$. For each $j\in I$, let $Q_j^-$ be the path consisting of the first 3 edges of $T_j$. Note that the lengths of the paths $Q_j^+,Q_j^-$ are always much smaller than the length of the path $T_j$.

For each $i\in[r]$, let $P_i$ be such that $T_i=Q^-_iP_iQ^+_i$ is a path partition. Label vertices so that $P_i$ is an $x_i,y_i$-path directed from $x_i$ to $y_i$. Note that each path $P_i$ is $T_i$ with up to $3r+1$ edges removed from each end. As the original length of such a path was at least $\ell_r=2500m$, and we have $1/n\ll\mu\ll 1/r$, we have by \eqref{eqn:m} that \ref{prop3} holds.

Let $x=x_1$ and note that $Q_1^-=x_1'x$. Let $S_1\subset R_1+x_1'x$ be the maximal in-arborescence in $R_1+x_1'x$ with root $x$. Note we have that $|S_1|\geq 2$. Let $y=y_r$ and note that $Q_r^+=yy_r'$. Let $S_{r+1}$ be the maximal out-arborescence in $R_{r+1}+yy_r'$ with root $y$. Note we have $|S_{r+1}|\geq 2$.

If $k_0$ is the number of in-leaves of $S_1$, then as its root $x$ is an out-leaf, $S_1$ has $k_0+1$ leaves. Similarly, if $k_1$ is the number of out-leaves of $S_{r+1}$, then $S_{r+1}$ has $k_1+1$ leaves. Now, take the path, $S$ say,  between $S_1$ and $S_{r+1}$ in $T$ and note that the tree $S_1\cup S\cup S_{r+1}$ has $(k_0+1)+(k_1+1)-2=k_0+k_1$ leaves. Noting that $T$ has at least as many leaves as $S_1\cup S\cup S_{r+1}\subset T$ completes the proof that \ref{prop6} holds.

Now, for each $i\in \{1,r+1\}$ and each $\diamond\in \{+,-\}$, let $F_{i}^\diamond\subset S_{i}\cup R_i$ be the digraph formed from the union of the paths in $(S_{i}\cup R_i)-E(S_i)$ from $V(S_i)$ which start with a $\diamond$-edge, and let $F_{i}=F_{i}^+\cup F^-_{i}=(S_{i}\cup R_i)-E(S_{i})$. Note that, by the maximality of $S_1$ as an in-arborescence and the maximality of $S_{r+1}$ as an out-arborescence, we have that $E(F_1^-)=E(F_{r+1}^+)=\emptyset$, completing the proof that \ref{prop5} holds. For each $i\in \{1,r+1\}$, $|F_i|\leq |R_i|+1\leq m/2+1\leq m$, so \ref{prop4} holds as well for $i\in \{1,r+1\}$.

We now construct $y_{i-1},x_i$-paths $S_i$, for each $2\leqslant i\leqslant r$. For each such $i$, we consider $Q^+_{i-1}\cup R_i\cup Q^-_i$, and add up to two edges (according to the cases below) before finding a directed path $S_i$ through the resulting digraph. We divide into cases \ref{case1}--\ref{case4} according to whether $y_{i-1}'\in V(R_i)$ (i.e., if $i^+(i-1)=i$) and whether $x_i'\in V(R_i)$ (i.e., if $i^-(i)=i$) . These cases are depicted in Figure~\ref{fig:paths-through-islands}. Note that, if $y'_{i-1}\in V(R_i)$ then $Q_{i-1}^+$ consists of only the edge $y_{i-1}y_{i-1}'$, and if $x_i'\in V(R_i)$ with $i\notin I$, then $Q_i^-$ consists of only the edge $x_i'x_{i}$.
Precisely, for each $2\leq i\leq r$, we do the following.

\stepcounter{capitalcounter}
\begin{enumerate}[label={\bfseries\Alph{capitalcounter}\arabic{enumi}}]
\item If $y_{i-1}'$ and $x_i'$ are both in $V(R_i)$, then do the following.\label{case1}
\begin{enumerate}[label = {\bfseries \Alph{capitalcounter}\arabic{enumi}.\arabic{enumii}}]
\item\label{case11} If the $y_{i-1}',x_{i}'$-path in the tree $R_i$ is a directed path from $y_{i-1}'$ to $x_{i}'$, then let $S_{i}$ be the directed path from $y_{i-1}$ to $x_i$ in $R_i+y_{i-1}y_{i-1}'+x_i'x_i$.
\item\label{case12} If the $y_{i-1}',x_{i}'$-path in the tree $R_i$ is not a directed path from $y_{i-1}'$ to $x_{i}'$ (i.e., if $i\in I$), then let $S_{i}'$ be the maximal directed subpath from $y_{i-1}'$ that it contains. Let $S_i$ be the path consisting of the edge $y_{i-1}y_{i-1}'$, followed by $S_i'$, followed by a new edge from the endvertex of $S_i'$ to $x_i$.
\end{enumerate}
\item\label{case2} If $y_{i-1}'\in V(R_i)$ and $x_i'\notin V(R_i)$, then let $S_i$ be the path $y_{i-1}y_{i-1}'x_i$.
\item\label{case3} If $y_{i-1}'\notin V(R_i)$ and $x_i'\in V(R_i)$, then let $S_i$ be the path $y_{i-1}x_i'x_i$.
\item\label{case4} If $y_{i-1}',x_i'\notin V(R_i)$, then let $z\in V(R_i)$ be arbitrary, and let $S_i$ be the path $y_{i-1}zx_{i}$.
\end{enumerate}

\begin{figure}
\DiagramPathsThroughIslands
\vspace{-0.75cm}
\caption{Cases~\ref{case1}-\ref{case4}.}\label{fig:paths-through-islands}
\end{figure}

Now, for each $2\leq i\leq r$, we choose $F_{i}^+$, $F_{i}^-$ and $F_{i}=F_{i}^+\cup F_{i}^-$. To do so, for each $2\leq i\leq r$ and each $\diamond\in \{+,-\}$, let $F_{i}^\diamond\subset S_{i}\cup R_i$ be the digraph formed from the union of the paths in $(S_{i}\cup R_i)-E(S_{i})$ from $V(S_{i})$ which start with a $\diamond$-edge, and let $F_{i}=F_{i}^+\cup F^-_{i}=(S_{i}\cup R_i)-E(S_{i})$.
Note that $F_i^+$ and $F_i^-$ could consist of a single vertex. For each $2\leq i\leq r$, $|F_{i}|= |R_i|+2\leq m/2+2\leq m$. We now have that \ref{prop4} holds for each $i\in[r+1]$, as required.

Let $\ell$ be the number of paths $Q_i^\diamond$, $i\in[r]$, $\diamond\in\{+,-\}$ with length greater than 1, so that $0\leqslant\ell\leqslant2r$. Relabel these paths arbitrarily as $Q_i,i\in[\ell]$. Note that, as we created no new vertices, we have that $V(D)\subset V(T)$ (with equality once we confirm $T\subset D$ below). It is left then to prove that \ref{prop1}, \ref{prop2}, and \ref{prop78} hold and check the properties at the start of Definition~\ref{def:gooddecomp}.

Note that, for each $2\leq i\leq r$, $S_i$ was a directed $y_{i-1},x_i$-path. Therefore, as $x=x_1$ and $y=y_r$,
\begin{equation}
P:=P_1S_2P_2S_2\ldots P_{r-1}S_{r}P_{r}
\end{equation}
is a path partition of the directed $x,y$-path $P$. Furthermore, we have that $S_1$ is an in-arborescence with root $x$ and that $S_{r+1}$ is an out-arborescence with root $y$.

Now,  by construction, $T\subseteq P\cup S_1\cup S_{r+1}\cup (\cup_{i\in [r+1]}F_i)\cup(\cup_{i\in [r],\diamond\in\{+,-\}}Q_i^\diamond)=D$. Whenever $Q_i^+$ has length~1 and $i<r$, we have that $i^+(i)=i+1$, so $S_{i+1}$ is chosen in \ref{case11}, \ref{case12}, or \ref{case2}, and hence $Q_i^+=y_{i-1}y_{i-1}'\subseteq S_{i+1}$. Note that $Q_r^+$ has length~1, and $Q_r^+=yy_r'$ is in $S_{r+1}$. Whenever $Q_i^-$ has length 1 and $i>1$, we must have that $i\notin I$ and $i^-(i)=i$, and therefore $S_{i}$ is chosen in \ref{case11} or \ref{case3}, so that $Q_i^-=x_i'x_i\subseteq S_i$. 
Note that $Q_1^-$ has length 1, and $Q_1^-=x_1'x$ is in $S_1$.
Therefore, $P\cup(\cup_{i\in [r],\diamond\in\{+,-\}}Q_i^\diamond)=P\cup(\cup_{i\in[\ell]}Q_i)+x_1'x+yy_r'$, and so $T\subseteq P\cup S_1\cup S_{r+1}\cup (\cup_{i\in [r+1]}F_i)\cup(\cup_{i\in[\ell]}Q_i)=D$ and \ref{prop1} holds. 

Furthermore, note that $V(R_i)$, $i\in [r+1]$, and $V(T_i)\setminus\{x_i',y_i'\}$, $i\in [r]$, form a partition of $V(T)$. For each $i\in [r]$, $V(Q_i^-)\setminus \{x_i,x_i'\}$, $V(P_i)$ and $V(Q_i^+)\setminus \{y_i,y_i'\}$ form a partition of $V(T_i)\setminus\{x_i',y_i'\}$. For each $2\leq i\leq r$, by the choice of $F^+_i$ and $F^-_i$, $V(F_i^+)\setminus V(S_i)$, $V(F_i^-)\setminus V(S_i)$ and $V(S_i)\setminus \{y_{i-1},x_i\}$ form a partition of $R_i$, while $V(F_1^-)\setminus V(S_1)=\emptyset$, $V(F_1^+)\setminus V(S_1)$ and $V(S_1)\setminus \{x_1\}$ partition $V(R_1)\setminus \{x_1\}$, and $V(F_{r+1}^-)\setminus V(S_{r+1})$, $V(F_{r+1}^+)\setminus V(S_{r+1})=\emptyset$ and $V(S_{r+1})\setminus \{y_{r}\}$ partition $V(R_{r+1})\setminus \{y_r\}$. As $V(P)=(\cup_{i\in [r]}V(P_i))\cup (\cup_{2\leq i\leq r}(V(S_i)\setminus \{y_{i-1},x_i\}))$, the sets listed in \ref{prop2} form a partition of $V(T)$.

Therefore, we need only show that, for each path $i\in [\ell]$, either \ref{prop7} or \ref{prop8} holds. If $Q_i=Q_j^+$ for some $j\in[r]$, then $Q_i$ is a directed $y_j,y_j'$-path of length $3(i^+(j)-(j+1))+1>1$, so that $i^+(j)>j+1$. As $y_j\in V(S_{j+1})\subseteq V(F_{j+1})$ and $y_j'\in V(R_{i^+(j)})\subseteq V(F_{i^+(j)})$,  \ref{prop7} holds for $Q_i$. If $Q_i=Q_j^-$ for some $j\in[r]\setminus I$, then $Q_i$ is a directed $x_j',x_j$-path of length $3(j-i^-(j))+1>1$, so that $i^-(j)<j$. As $x_j'\in V(R_{i^-(j)})\subseteq V(F_{i^-(j)})$, and $x_j\in V(S_j)\subseteq V(F_j)$,  \ref{prop7} holds for $Q_i$. Finally, if $Q_i=Q_j^-$ for some $j\in I$, then $S_j$ was chosen in \ref{case12}. From the choice of the relevant maximal directed path $S_{j}'$, the first vertex $x_j'$ of $Q_i$ is in $V(F_{j}^-)\setminus V(S_{j})$ and the last vertex $x_j$ of $Q_i$ is also the last vertex of $S_{j}$, and therefore \ref{prop8} holds.
\end{proof}


\subsection{Embedding a good decomposition}\label{subsec:embedding_a_good_decomposition}

We now show that it is possible to embed an $(r,m)$-good decomposition $D$ of a $n$-vertex tree $T$ with $k$ leaves into an $(n+k-2)$-vertex tournament $G$, when $1/n\ll 1/r,1/k,m/n$. For our sketch we will use the notation of Definition~\ref{def:gooddecomp}. We take a median order of $G$ and find within it consecutive disjoint intervals $V_1,U_1,V_2,U_2,\ldots,V_r,U_r,V_{r+1}$ with carefully chosen sizes. We will embed $S_1$ into $G[V_1]$ while embedding its root to the last vertex of $V_1$ under $\sigma$, using Theorem~\ref{thm:DH}, and similarly embed $S_{r+1}$ into $V_{r+1}$ so that its root is embedded to the first vertex of $V_{r+1}$ under $\sigma$. For each $i\in\{2,\ldots,r\}$, we will have $|V_i|=|S_i|$ and embed the directed path $S_i$ into $G[V_i]$ using the ordering provided by $\sigma$.

As described at the start of this section, for each $i\in [r]$, we then find a short path $R_i$ from the last vertex of $V_i$ under $\sigma$ to the first vertex of $V_{i+1}$ under $\sigma$ which can `absorb' any subset of vertices from $U_i$ (see Claim~\ref{clm:Rismall}). We then embed the forests $F_i^+$, $F_i^-$, $i\in [r+1]$ and directed paths $Q_i$, $i\in [\ell]$, into $\cup_{i\in [r]}(U_i\setminus V(R_i))$, before incorporating the right number of vertices into each path $R_i$. More specifically, as depicted in Figure~\ref{fig:embedding}, for each $i\in [r]$, we will divide $U_i$ into six parts, $U_{i,1},\ldots,U_{i,6}$, again with carefully chosen sizes. The sets $U_{i,1}$ and $U_{i,6}$ will be small and covered by $R_i$ (aiding the desired `absorption' property of $R_i$). 
We will embed $V(F_i^+)\setminus V(S_i)$ into $U_{i,2}\setminus V(R_i)$, using \ref{prop4} and that typical vertices in $V_i$ (the image of $S_i$) have plenty of out-neighbours in $U_{i,2}$ (see Claim~\ref{clm:manynbr}) and $V(R_i)$ is small. Similarly, we will embed $V(F_{i+1}^-)\setminus V(S_{i+1})$ into $U_{i,4}\setminus V(R_i)$ (see also Claim~\ref{clm:manynbr}). We will embed paths $Q_j$ satisfying \ref{prop8} using the appropriate set $U_{i,5}$ (see Claim~\ref{clm:connectUi3short}).
We will then embed paths $Q_j$ satisfying \ref{prop7} using different sets $U_{i,3}$ (see Claim~\ref{clm:connectUi3}). 
As we chose the size of the sets $U_i$, $i\in [r]$, carefully, for each $i\in [r]$, we will then have the correct number of vertices unused in $U_i$ to absorb into $R_i$ and complete the embedding of $P_i$, and hence also the embedding of $T\subset D$.

\begin{lemma}\label{lem:embedgood} Let $1/n\ll \mu, 1/r,1/k$ and $m\geq \mu n$. Every tournament with $n+k-2$ vertices contains a copy of every $n$-vertex oriented tree with $k$ leaves which has an $(r,m)$-good decomposition.
\end{lemma}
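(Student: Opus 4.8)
The plan is to follow the sketch given in Section~\ref{subsec:embedding_a_good_decomposition} and make each step precise. Fix a median order $\sigma=v_1,\ldots,v_{n+k-2}$ of the tournament $G$. The first task is to choose the sizes of the intervals $V_1,U_1,\ldots,V_r,U_r,V_{r+1}$ and the subintervals $U_{i,1},\ldots,U_{i,6}$ of $U_i$. I would set $|V_1|=|S_1|+(\text{number of in-leaves of }S_1)-1$ and $|V_{r+1}|=|S_{r+1}|+(\text{number of out-leaves of }S_{r+1})-1$, so Theorem~\ref{thm:DH} (and its dual via \ref{prop5}) applies to embed $S_1$ into $G[V_1]$ with root at the last vertex of $V_1$, and $S_{r+1}$ into $G[V_{r+1}]$ with root at the first vertex of $V_{r+1}$. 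For $2\le i\le r$ set $|V_i|=|S_i|$ and embed the directed path $S_i$ in order along $G[V_i]$, using Lemma~\ref{lm:basic_properties_of_median_orders}~ii). The sizes of the $U_i$ are then forced: $|U_i|$ must be $|P_i|$ plus the number of vertices of the various $F_j\setminus S_j$ and $Q_j$-interiors that get embedded in it. Since by \ref{prop2} the listed sets partition $V(T)$, and $|V(T)|=n$, while the excess vertices of $G$ number exactly $k-2$, the count must balance: the in-leaves of $S_1$ and out-leaves of $S_{r+1}$ consume $(\text{those leaf counts})-2\le k-2$ of the slack by \ref{prop6}; I would then track exactly where the remaining slack goes (it should be zero — everything else is embedded bijectively), and this bookkeeping, using \eqref{eqn:Ppart}, \ref{prop2} and \ref{prop6}, is what pins down all the interval sizes. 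I expect a clean identity of the form $\sum|V_i|+\sum|U_i| = n+k-2$ to fall out.

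Next, for each $i\in[r]$, I would find the short "absorbing" path $R_i$ from $\mathrm{last}(V_i)$ to $\mathrm{first}(V_{i+1})$ with interior in $U_i$, covering $U_{i,1}\cup U_{i,6}$ and having bounded extra length, such that every vertex of $U_i$ has an out-neighbour on $R_i$ occurring after an in-neighbour on $R_i$; this is the content of the promised Claim~\ref{clm:Rismall}, proved by repeatedly applying Corollary~\ref{cor:last}/Lemma~\ref{lm:connecting_path_length_3} within sub-intervals and using that consecutive vertices in a median order dominate each other (Lemma~\ref{lm:basic_properties_of_median_orders}~ii)). Because $|P_i|\ge 2000m$ by \ref{prop3} while all the $F_j$ and $Q_j$ have size $O(m)$, the path $R_i$ can be taken with $O(m)$ interior vertices, leaving room. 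Then I embed, into $\cup_i (U_i\setminus V(R_i))$: the forests $V(F_i^+)\setminus V(S_i)$ into $U_{i,2}\setminus V(R_i)$ and $V(F_{i+1}^-)\setminus V(S_{i+1})$ into $U_{i,4}\setminus V(R_i)$ — here I use \ref{prop4} (each component meets $S_i$ in one vertex, with only out- resp.\ in-edges there) together with Claim~\ref{clm:manynbr} (a typical vertex of $V_i$ has $\gg m$ out-neighbours, resp.\ a typical vertex of $V_{i+1}$ has $\gg m$ in-neighbours, in the relevant sub-interval) and Corollary~\ref{cor:DHbound} to embed each component as an arborescence rooted at the already-placed $S_i$-vertex. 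The paths $Q_j$ of type \ref{prop8} (length-$3$, from $F_j^-\setminus S_j$ into the last vertex of $S_j$) are embedded using $U_{i,5}$ via Claim~\ref{clm:connectUi3short} (an application of Lemma~\ref{lm:connecting_path_length_3}), and the paths $Q_j$ of type \ref{prop7} (from $F_j$ to $F_{j'}$, length $3(j'-j)+1$) are embedded using the sets $U_{i,3}$ for $j\le i<j'$ via Claim~\ref{clm:connectUi3}, again chaining Lemma~\ref{lm:connecting_path_length_3} through one interval per step.

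Finally, for each $i\in[r]$, exactly the right number of vertices of $U_i$ remain unused (by the size choices), and I absorb them one at a time into $R_i$: given an unused vertex $u\in U_i$, the absorbing property guarantees a consecutive pair $a\to b$ on $R_i$ (with $a$ an in-neighbour and $b$ an out-neighbour of $u$, $a$ before $b$) — actually $u$ sits between some in-neighbour and some later out-neighbour, and since $G$ is a tournament we can find an adjacent such pair — and we replace the sub-path of $R_i$ between them by routing through $u$; iterating over all unused vertices of $U_i$, and choosing the pair greedily so earlier-absorbed vertices are not disturbed, turns $R_i$ into the required directed $\mathrm{last}(V_i),\mathrm{first}(V_{i+1})$-path $P_i$ of the correct length covering all unused vertices. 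Stitching $S_1$, the $S_i$, the $P_i$, $S_{r+1}$, the $F_i$ and the $Q_j$ together gives a copy of $D$, hence of $T\subseteq D$ by \ref{prop1}. The main obstacle, and the part needing the most care, is the simultaneous bookkeeping: choosing the interval sizes so that (i) Theorem~\ref{thm:DH} applies at the two ends, (ii) each sub-interval $U_{i,\cdot}$ is large enough for the embedding step assigned to it despite $V(R_i)$ being deleted from it, and (iii) the leftover counts in each $U_i$ match $|P_i|$ exactly after the $R_i$-absorption — all while the total is exactly $n+k-2$. The geometric gaps from \ref{prop3} ($|P_i|\ge 2000m$) versus the $O(m)$ sizes of everything else are what make this feasible, but verifying the Claims~\ref{clm:Rismall}--\ref{clm:connectUi3} each require a careful count of forbidden vertices to stay within the hypothesis $|A|\le(n'-8)/6$ of Lemma~\ref{lm:connecting_path_length_3}.
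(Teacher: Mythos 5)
Your outline follows the paper's proof closely: the interval sizes, the use of Theorem~\ref{thm:DH} at the two ends, the embedding of the forests $F_i^{\pm}$ into $U_{i,2}$ and $U_{i,4}$ via the median-order degree property, the routing of the two types of $Q_j$ through $U_{i,5}$ and the $U_{i',3}$ via Lemma~\ref{lm:connecting_path_length_3}, and the final insertion of leftover vertices into $R_i$ are all exactly the paper's steps, and your bookkeeping identity does come out as $\sum_i|V_i|+\sum_i|U_i|\leq n+k-2$ using \ref{prop2} and \ref{prop6}. The one genuine gap is the construction of the absorbing path $R_i$. You correctly identify the property it must have — every vertex of $U_i$ (outside the end buffers) needs an in-neighbour on $R_i$ preceding it \emph{and} an out-neighbour on $R_i$ following it in $\sigma$ — but you propose to build $R_i$ "by repeatedly applying Corollary~\ref{cor:last}/Lemma~\ref{lm:connecting_path_length_3} within sub-intervals". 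That only produces a short directed path from the last vertex of $V_i$ to the first vertex of $V_{i+1}$; it gives no control over which vertices of $U_i$ see the path on both sides. A short path chosen this way can easily miss, say, all in-neighbours of some vertex $v$ that precede $v$, and then $v$ can never be inserted and the absorption step fails.

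The paper closes this gap probabilistically: it takes a random subset $U_i'\subseteq U_i$ including each vertex with probability $\mu/20$, and lets $R_i$ be a shortest directed path (in $\sigma$-order, using Corollary~\ref{cor:last} only to keep consecutive chosen vertices at distance at most $2$) covering $U_{i,1}\cup U_i'\cup U_{i,6}$. Since by Lemma~\ref{lm:basic_properties_of_median_orders}~ii) each $v\in U_i\setminus(U_{i,1}\cup U_{i,6})$ has at least $|U_{i,1}|/2=m/2$ in-neighbours before it and at least $m/2$ out-neighbours after it within $U_i$, a Chernoff bound plus a union bound shows that with positive probability $U_i'$ meets both of these sets for every such $v$ simultaneously, while $|U_i'|=O(\mu n)\leq m$ keeps $R_i$ short. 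This is the one non-routine idea you would need to supply; everything else in your proposal matches the paper and would go through (your description of $F_v^+-v$ as an "arborescence" is a slight misnomer — it is a general forest hanging off $\phi(v)$ by out-edges, embedded into $N^+(\phi(v),U_{i,2})\setminus V(R_i)$ by Theorem~\ref{cor:3n-3} — but that does not affect correctness).
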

\begin{proof} Note that we can additionally assume that $\mu\ll 1/r,1/k$. Let $G$ be an $(n+k-2)$-vertex tournament and suppose that the $n$-vertex tree $T$ with $k$ leaves has an $(r,m)$-good decomposition $D$ using the notation in Definition~\ref{def:gooddecomp}.
Let $k_0$ be the number of in-leaves of $S_1$ and let $k_1$ be the number of out-leaves of $S_{r+1}$. By \ref{prop5}, we have $k_0,k_1\geq 1$ and by \ref{prop6} we have $k_0+k_1\leq k$.

Let $I_1\subset [\ell]$ be the set of $i\in [\ell]$ satisfying \ref{prop7}. Let $I_2=[\ell]\setminus I_1$, so that, by \ref{prop78}, each $i\in I_2$ satisfies \ref{prop8}.
For each $i\in I_1$, using \ref{prop7}, let $q_i,r_i\in[r+1]$ with $q_i<r_i$ be such that $Q_i$ is a directed path from $F_{q_i}$ to $F_{r_i}$ with length $3(r_i-q_i)+1$. For each $i\in [r]$, let $a_i$ be the number of $j\in I_1$ for which $q_j\leq i<r_j$. For each $i\in I_2$, using \ref{prop8}, let $2\leq s_i\leq r$ be such that $Q_i$ is a directed path from $V(F_{s_i}^-)\setminus V(S_{s_i})$ to the last vertex of $S_{s_i}$. For each $i\in [r]$, let $b_i$ be the number of $j\in I_2$ with $s_j=i+1$ (and note that we always have $b_r=0$).

Let $\sigma=v_1,\ldots,v_{n+k-2}$ be a median order of $G$. Take in $v_1,\ldots, v_{n+k-2}$ consecutive disjoint intervals
\[
V_1,U_1,V_2,U_2,V_3,\ldots,V_r,U_{r},V_{r+1}
\]
such that $|V_1|=|S_1|+k_0-1$, $|V_{r+1}|=|S_{r+1}|+k_1-1$, and, for each $2\leq i\leq r$, $|V_i|=|S_i|$, and, for each $i\in [r]$,
\begin{equation}\label{Uisize}
|U_i|=|P_i|-2+|V(F^+_{i})\setminus V(S_i)|+|V(F^-_{i+1})\setminus V(S_{i+1})|+3a_i+2b_i\geq |P_i|-2\overset{\text{\ref{prop3}}}{\geq} 2000m-1 .
\end{equation}
Note that this is possible, as, by \ref{prop4} and \ref{prop5}, $|F^-_1|=|S_1|$ and $|F^+_{r+1}|=|S_{r+1}|$, so that, using \ref{prop4}, we have
\begin{align*}
\sum_{i=1}^{r+1}|V_i|&+\sum_{i=1}^{r}|U_i|=k_0+k_1-2+\sum_{i=1}^{r+1}|S_i|+\sum_{i=1}^{r}(|P_i|-2+|F^+_{i}|+|F^-_{i+1}|-|S_{i}|-|S_{i+1}|+3a_i+2b_i)\\
&\overset{\text{\ref{prop6}}}{\leq} k-2+|S_1|+|S_{r+1}|+\sum_{i=2}^{r}|S_i|+\sum_{i=1}^{r}(|P_i|-2)+\sum_{i=1}^{r+1}(|F^+_{i}|+|F^-_{i}|-2|S_{i}|)+\sum_{i\in [r]}(3a_i+2b_i)\\
&\overset{\eqref{eqn:Ppart}}{=}k-2+|S_1|+|S_{r+1}|+|P|-2+\sum_{i=1}^{r+1}(|F^+_{i}|+|F^-_{i}|-2|S_{i}|)+ 3\sum_{i\in I_1}(r_i-q_i)+2|I_2|\\
&=k-2+|P|+(|S_1|+|S_{r+1}|-2)+\sum_{i=1}^{r+1}|(V(F^+_{i})\cup V(F^-_i))\setminus V(S_i)|+ \sum_{i\in [\ell]}(|Q_i|-2)\\
&\overset{\text{\ref{prop2}}}=n+k-2.
\end{align*}

Next, for each $i\in [r]$, partition $U_i$ as intervals $U_{i,1},\ldots,U_{i,6}$ in that order such that
\begin{equation}\label{eqn:Uijsizes}
|U_{i,1}|=m,\;|U_{i,2}|=10m,\; |U_{i,4}|=110m,\;|U_{i,5}|=100m,\; |U_{i,6}|=m
\end{equation}
\begin{equation}\label{eqn:Ui3size}
\text{ and }|U_{i,3}|=|U_i|-222m\overset{\eqref{Uisize}}\geq 1700m.
\end{equation}
Note also, by \ref{prop4}, that, for each $i\in \{2,\ldots,r\}$,
\begin{equation}\label{eqn:Visize}
    |V_i|=|S_i|\leq |F_i|\leq m.
\end{equation}

For each $i\in [r]$, let $U'_i$ be a subset of $U_i$ where each vertex is included uniformly at random with probability $\mu/20$. By Lemma~\ref{lm:basic_properties_of_median_orders}~ii) $v_1v_2\ldots v_{n+k-2}$ forms a directed path in that order, so there is a directed path from the last vertex of $V_i$ under $\sigma$ to the first vertex of $V_{i+1}$ under $\sigma$, whose vertex set covers $U_{i,1}\cup U'_i\cup U_{i,6}$ and whose vertex order is a suborder of $\sigma$. Let $R_i$ be a shortest such path. We now prove that, with positive probability, the `absorption property' we need for $R_i$ holds, as well as a bound on $|R_i|$.

\begin{claim}\label{clm:Rismall} With positive probability, for each $i\in [r]$, $|V(R_i)\setminus (U_{i,1}\cup U_{i,6})|\leq m$, so that $|R_i|\leq 3m$, and, for any $U\subset U_i\cup V(R_i)$ with $V(R_i)\subset U$, there is a directed path with the same start vertex and end vertex as $R_i$ but with vertex set $U$.
\end{claim}
\renewcommand\qedsymbol{$\boxdot$}
\begin{proof}[Proof of Claim~\ref{clm:Rismall}] Let $p=\mu/20$ and $i\in [r]$. Note that, by Lemma~\ref{chernoff}, as $|U_i|\leq n$ and $1/n\ll \mu,1/r$, we have, with probability at least $1-1/3r$ that $|U'_i|\leq 2pn$. For each $v\in U_i\setminus (U_{i,1}\cup U_{i,6})$, let $\mathbf{E}_v$ be the following event.
\begin{itemize}
\item[$\mathbf{E}_v$:] There are $u\in N^-(v)\cap U_i'$ and $u'\in N^+(v)\cap U_i'$ with $u<_\sigma v<_\sigma u'$.
\end{itemize}
Now, by Lemma~\ref{lm:basic_properties_of_median_orders}~ii), for each $v\in U_i\setminus (U_{i,1}\cup U_{i,6})$, we have
\[
|\{u\in N^-(v)\cap U_i:u<_\sigma v\}|\geq \frac{|\{u\in U_i:u <_\sigma v\}|}{2}\geq \frac{|U_{i,1}|}{2}\overset{\eqref{eqn:Uijsizes}}{=} \frac{m}{2},
\]
and
\[
|\{u\in N^+(v)\cap U_i:u>_\sigma v\}|\geq \frac{|\{u\in U_i:u >_\sigma v\}|}{2}\geq \frac{|U_{i,6}|}{2}\overset{\eqref{eqn:Uijsizes}}{=} \frac{m}{2},
\]
so that $\P(\mathbf{E}_v\text{ does not hold})\leq 2(1-p)^{m/2}\leq 2\exp(-pm/2)\leq 2\exp(-\mu^2 n/40)$. Therefore, as $1/n\ll \mu,1/r$, a union bound implies that, with probability at least $1-1/3r$, $\mathbf{E}_v$ holds for each $v\in U_i\setminus (U_{i,1}\cup U_{i,6})$. Thus, with probability at least 1/3, we have, for each $i\in [r]$, that $\mathbf{E}_v$ holds for each $v\in U_i\setminus (U_{i,1}\cup U_{i,6})$, and $|U'_i|\leq 2pn$. Assuming these events occur, we now prove that the property in the claim holds for each $i\in [r]$.

By Corollary~\ref{cor:last} and the minimality of $R_i$, any two vertices in $U_{i,1}\cup U_i'\cup U_{i,6}$ on $R_i$, with no vertices between them on $R_i$ from $U_{i,1}\cup U_i'\cup U_{i,6}$ 
have at most 1 vertex between them on $R_i$. As the vertices from $U_{i,1}\cup U_{i,6}$ form two intervals on $R_i$, just after the first vertex and just before the last vertex of $R_i$ respectively, $|V(R_i)\setminus (U_{i,1}\cup U_{i,6})|\leq 2+2|U_i'|+1\leq 4pn+3\leq m$.

Now, take any set $U\subset U_i\cup V(R_i)$ with $V(R_i)\subset U$. Let $R_U$ be a directed path with the same endvertices as $R_i$ which contains every vertex of $R_i$ in order according to $\sigma$ and for which $V(R_U)\subset U$, and which, under these conditions, has the maximum possible length. Note that this exists as $R_i$ itself satisfies these conditions. Suppose, for contradiction, that there is some $v\in U\setminus V(R_U)$. Note that $v\in U_i\setminus (U_{i,1}\cup U_{i,6})$. Let $\ell$ be the length of $R_U$ and label vertices so that $R_U=u_0u_1\ldots u_\ell$. As $\mathbf{E}_v$ holds and $U_i'\subset V(R_i)\subset V(R_U)$, we can take $j=\min\{j'\in \{0,1,\ldots,\ell\}:u_{j'}\in N^-(v)\}$ and find that $u_j<_\sigma v$. Let $j''\in \{0,1,\ldots,\ell\}$ be the smallest $j''>j$ such that $u_{j''}\in N^+(v)$.

Observe that, $u_{j''-1}\notin N^+(v)$, so that, as $G$  is a tournament, $u_{j''-1}\in N^-(v)$ and therefore
\[
u_0u_1\ldots u_{j''-1}vu_{j''}\ldots u_\ell,
\]
is a directed path with the same endvertices as $R_U$ (and hence $R_i$) which contains every vertex of $R_i$ in order according to $\sigma$. As this path has vertex set $\{v\}\cup V(R_U)\subset U$ and $v\notin V(R_U)$, this path contradicts the maximality of $R_U$. Thus, $V(R_U)=U$, so that $R_U$ is a directed path with the same endvertices as $R_i$ and with vertex set $U$, as required.
\end{proof}\renewcommand\qedsymbol{$\square$}

Assume then, that the property in Claim~\ref{clm:Rismall} holds. We now show three further claims, before embedding $T$. This embedding, annotated with which part of the embedding is done with each claim, is depicted in Figure~\ref{fig:embedding}. For each $i\in [r+1]$, we will use the following claim to embed the vertices in $V(F^+_i)\setminus V(S_i)$ to $U_{i,2}$ (if $i\neq r+1$) and embed the vertices in $V(F^-_{i})\setminus V(S_{i})$ to $U_{i-1,4}$ (if $i\neq 1$) so that they attach appropriately to an embedding of $S_i$ into the vertex set $V_i$.

\begin{figure}
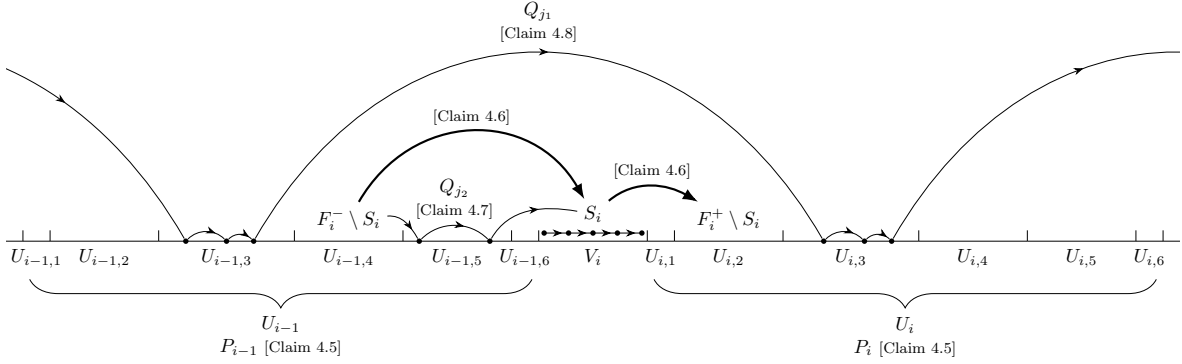

\DiagramEmbedding
\vspace{-0.75cm}
\caption{Embedding an $(r,m)$-good decomposition (as depicted in Figure~\ref{fig:gooddecomp}) into a median order, with the claims used to embed each part.}\label{fig:embedding}
\end{figure}

\begin{claim}\label{clm:manynbr}
For each $i\in [r]$ and $v\in V_i$, we have $|N^+(v,U_{i,2})\setminus V(R_i)|\geq 3m$, and, for each $i\in [r]$ and $v\in V_{i+1}$, we have $|N^-(v,U_{i,4})\setminus V(R_i)|\geq 3m$.
\end{claim}
\renewcommand\qedsymbol{$\boxdot$}
\begin{proof}[Proof of Claim~\ref{clm:manynbr}] Let $i\in [r]$ and $v\in V_i$, and take $V_{i,v}=\{u\in V_i:u>_\sigma v\}$. By Lemma~\ref{lm:basic_properties_of_median_orders}~ii), we have that
\begin{align*}
|N^+(v,U_{i,2})|&\geq |N^+(v,V_{i,v}\cup U_{i,1}\cup U_{i,2})|-|V_{i,v}\cup U_{i,1}|\geq \frac{|V_{i,v}\cup U_{i,1}\cup U_{i,2}|}{2}-|V_{i,v}\cup U_{i,1}|\\
&=\frac{|U_{i,2}|-|V_{i,v}\cup U_{i,1}|}{2}\geq\frac{|U_{i,2}|-|V_{i}\cup U_{i,1}|}{2}\overset{\eqref{eqn:Uijsizes},\eqref{eqn:Visize}}{\geq}  \frac{10m-m-m}{2}=4m.
\end{align*}
Therefore, by the property from Claim~\ref{clm:Rismall}, $|N^+(v,U_{i,2})\setminus V(R_i)|\geq |N^+(v,U_{i,2})|-(|R_i|-|U_{i,1}|-|U_{i,6}|)\geq 3m$.

Let then $i\in [r]$ and $v\in V_{i+1}$ and let $V'_{i+1,v}=\{u\in V_{i+1}:u<_\sigma v\}$.  By Lemma~\ref{lm:basic_properties_of_median_orders}~ii), we have similarly that
\begin{align*}
|N^-(v,U_{i,4})|&\geq |N^-(v,V'_{i+1,v}\cup U_{i,4}\cup U_{i,5}\cup U_{i,6})|-|V'_{i+1,v}\cup U_{i,5}\cup U_{i,6}|\\
&\geq \frac{|U_{i,4}|-|V'_{i+1,v}\cup U_{i,5}\cup U_{i,6}|}{2}\overset{\eqref{eqn:Uijsizes},\eqref{eqn:Visize}}{\geq}\frac{110m-100m-m-m}{2}=4m.
\end{align*}
Therefore,  by the property from Claim~\ref{clm:Rismall} again, $|N^-(v,U_{i,4})\setminus V(R_i)|\geq |N^-(v,U_{i,4})|-(|R_i|-|U_{i,1}|-|U_{i,6}|)\geq 3m$.
\end{proof}\renewcommand\qedsymbol{$\square$}

We will use the following claim, for each $i\in I_2$, to embed the path $Q_i$ when its first and last vertex have already been embedded into $U_{s_i-1,4}$ and $V_{s_i}$ respectively.

\begin{claim}\label{clm:connectUi3short}
For each $2\leq j\leq r$, $v\in U_{j-1,4}$, $w\in V_{j}$ and $U\subset U_{j-1,4}\cup U_{j-1,5}$ with $|U|\leq2m$, there is a directed $v,w$-path in $G$ with length 3 and internal vertices in $(U_{j-1,4} \cup U_{j-1,5})\setminus (U\cup V(R_{j-1}))$.
\end{claim}
\renewcommand\qedsymbol{$\boxdot$}
\begin{proof}[Proof of Claim~\ref{clm:connectUi3short}]
Let $A_{j,v,w,U}=\{u\in U\cup V(R_{j-1})\cup V_{j}:v<_\sigma u <_\sigma w\}$ , and note that, by~\eqref{eqn:Visize} and the choice of $R_i$ according to Claim~\ref{clm:Rismall},  $|A_{j,v,w,U}|\leq 6m$. The number of vertices between $v$ and $w$ in $\sigma$ is at least $|U_{j-1,5}|+|U_{j-1,6}|=101m> 6|A_{j,v,w,U}|+8$. Therefore, by Lemma~\ref{lm:connecting_path_length_3}, there is a directed $v,w$-path in $G$ with length 3 and internal vertices in $\{u\notin A_{j,v,w,U}:v<_\sigma u <_\sigma w\}$. Because $U_{j-1,6}\subseteq V(R_{j-1})$, we have $\{u\notin A_{j,v,w,U}:v<_\sigma u <_\sigma w\}\subset (U_{j-1,4} \cup U_{j-1,5})\setminus (U\cup V(R_{j-1}))$, and so the claim holds.
\end{proof}\renewcommand\qedsymbol{$\square$}

For each $i\in [6]$, let $U_{0,i}=U_{r+1,i}=\emptyset$, and note that, by \ref{prop4} and \ref{prop6}, $|V_1|,|V_{r+1}|\leq m+k\leq 2m$. For each $i\in [r+1]$, let $\bar{V}_i=U_{i-1,4}\cup U_{i-1,5}\cup U_{i-1,6}\cup V_i\cup U_{i,1}\cup U_{i,2}$, and note that, by \eqref{eqn:Uijsizes} and \eqref{eqn:Visize}, $|\bar{V}_i|\leq 225m$. Note that $\bar{V}_1U_{1,3}\bar{V}_2U_{2,3}\ldots \bar{V}_rU_{r,3}\bar{V}_{r+1}$ are consecutive intervals in $\sigma$.

We will use the following claim, for each $i\in I_1$, to embed the path $Q_i$ when its first and last vertex have already been embedded into $\bar{V}_{q_i}$ and $\bar{V}_{r_i}$ respectively.

\begin{claim}\label{clm:connectUi3}
For each $1\leq i<j\leq r+1$, $v\in \bar{V}_i$, $w\in \bar{V}_j$ and $U\subset V(G)$ with $|U|\leq m$, there is a directed $v,w$-path in $G$ with length $3(j-i)+1$ and exactly 3 vertices in each set $U_{i',3}\setminus (U\cup V(R_{i'}))$, $i\leq i'<j$.
\end{claim}
\renewcommand\qedsymbol{$\boxdot$}
\begin{proof}[Proof of Claim~\ref{clm:connectUi3}]
First we will choose vertices $u_{i'}$, $i\leq i'<j$ between $u_{i-1}:=v$ and $w$ in the median order, with $u_{j-1}w\in E(G)$ before carefully applying Lemma~\ref{lm:connecting_path_length_3} to each consecutive pair of vertices in $v,u_i,u_{i+1},\ldots,u_{j-1}$ to get, together with $u_{j-1}w$, a $v,w$-path with length $3(j-i)+1$.

To do this, first, for each $i'$, $i\leq i'\leq j-2$, let $u_{i'}$ be the last vertex in $U_{i',3}\setminus (U\cup V(R_{i'}))$ under $\sigma$. Let $U'_{j-1,3}$ be the set of the last $250m$ vertices of $U_{j-1,3}$ under $\sigma$, and let $\bar{V}_{j,w}=\{w'\in \bar{V}_j:w'<_\sigma w\}$, so that $|\bar{V}_{j,w}|\leq |\bar{V}_j|\leq 225m$. Note that, by  Lemma~\ref{lm:basic_properties_of_median_orders}~ii), we have
\begin{align*}
|N^-(w,U'_{j-1,3})\setminus (U\cup V(R_{j-1}))|&\geq |N^-(w,\bar{V}_{j,w}\cup U'_{j-1,3})|-|\bar{V}_{j,w}|-|U\cup V(R_{j-1})|\\
&\geq \frac{|U'_{j-1,3}|-|\bar{V}_{j,w}|}{2}-|U\cup V(R_{j-1})|\geq\frac{250m-225m}{2}-4m>0.
\end{align*}
Let $u_{j-1}$ then be the last vertex of $N^-(w,U_{j-1,3})\setminus (U\cup V(R_{j-1}))$ under $\sigma$, noting that there are fewer than $250m$ vertices in $U_{j-1,3}$ after $u_{j-1}$ under $\sigma$. Let $u_{i-1}=v$.

For each $i\leq i'<j$, we will show there exists a directed $u_{i'-1},u_{i'}$-path $T_{i'}$ with length 3 and internal vertices in $U_{i',3}\setminus (U\cup V(R_{i'}))$. Noting that $T_{i}T_{i+1}\ldots T_{j-1}w$ is a directed path with length $3(j-i)+1$ and exactly three vertices in each set $U_{i',3}\setminus (U\cup V(R_{i'}))$, $i\leq i'<j$, will then complete the proof of the claim.

Let then $i\leq i'<j$ and let $A_{i'}=\{u\in U_{i'-1,3}\cup \bar{V}_{i'}\cup ((U\cup V(R_{i'}))\cap U_{i',3}):u_{i'-1}<_\sigma u<_\sigma u_{i'}\}$. Note that, for each $i\leq i'<j$, by the choice of $u_{i'}$ there are at most $|U\cup V(R_{i'-1})|\leq 4m$ vertices after $u_{i'-1}$ in $U_{i'-1,3}$ under $\sigma$, so $|A_{i'}|\leq 4m+225m+|U\cup V(R_{i'})|\leq 233m$. In addition, recall that there are fewer than $250m$ vertices in $U_{j-1,3}$ after $u_{j-1}$ under $\sigma$. Therefore, by \eqref{eqn:Ui3size}, for each $i\leq i'<j$, there are at least $1700m-250m> 6|A_{i'}|+8$ vertices in $U_{i',3}$ before $u_{i'}$ under $\sigma$. So, by Lemma~\ref{lm:connecting_path_length_3}, there is a directed $u_{i'-1},u_{i'}$-path $T_{i'}$ with length 3 and internal vertices in $\{u\notin A_{i'}:u_{i'-1}<_\sigma u <_\sigma u_{i'}\}\subset U_{i',3}\setminus (U\cup V(R_{i'}))$, as required.
\end{proof}
\renewcommand\qedsymbol{$\square$}

We are now ready to embed the $(r,m)$-good decomposition $D$ into $G$, as follows. Begin with the empty embedding $\phi:\emptyset\to V(G)$. For each $2\leq i\leq r$, recalling that $|V_i|=|S_i|$, extend $\phi$ to embed the directed path $S_i$ onto the vertices in $V_i$ in the order given by $\sigma$. Note that the vertices of each interval $V_i$ form a directed path in this order by Lemma~\ref{lm:basic_properties_of_median_orders}~ii).

Let $x'$ be the last vertex of $V_1$ under $\sigma$, and let $y'$ be the first vertex of $V_{r+1}$ under $\sigma$. Recall that $P$, as defined in \eqref{eqn:Ppart}, is a directed $x,y$-path, $S_1$ is an in-arboresence with $k_0$ in-leaves and root $x$, and $S_{r+1}$ is an out-arboresence with $k_1$ out-leaves and root $y$. Therefore, as $|V_1|=|S_1|+k_0-1$ and $|V_{r+1}|=|S_{r+1}|+k_1-1$, by  Theorem~\ref{thm:DH} (applied twice, once with directional duality) we can extend $\phi$ to embed $S_1$ into $V_1$ such that $\phi(x)=x'$ and embed $S_{r+1}$ into $V_{r+1}$ such that $\phi(y)=y'$.

Now, for each $i\in [r+1]$ and $v\in V(S_i)$, let $F_v^-$ be the component of $F_i^-$ containing $v$ and let $F_v^+$ be the component of $F_i^+$ containing $v$. For each vertex $v\in V(S_i)$ in increasing order of $\phi(v)$ under $\sigma$, greedily and disjointly extend $\phi$ to embed $F_v^--v$ into $N^-(\phi(v),U_{i-1,4})\setminus V(R_{i-1})$ and $F_v^+-v$ into $N^+(\phi(v),U_{i,2})\setminus V(R_{i})$.
Note this is possible for each $v\in V(S_i)$ as, by \ref{prop5}, if $|E(F_v^-)|>0$, then $i\geq 2$ and thus, by Claim~\ref{clm:manynbr},
\begin{align*}
|N^-(\phi(v),U_{i-1,4})\setminus (V(R_{i-1})\cup (\cup_{u\in V(S_i):\phi(u)<_\sigma \phi(v)} \phi (F_u^-)))|&\geq 3m-(|F_i^-|-|V(F_v^-)\setminus\{v\}|)\\
&\overset{\text{\ref{prop4}}}{\geq} 3|V(F_v^-)\setminus\{v\}|,
\end{align*}
so that a copy of $F_v^--v$ in $N^-(\phi(v),U_{i-1,4})\setminus (V(R_{i-1})\cup (\cup_{u\in V(S_i):\phi(u)<_\sigma \phi(v)} \phi (F_u^-)))$ exists by Theorem~\ref{cor:3n-3}. Similarly, for each $v\in V(S_i)$, this is also possible for $F_v^+-v$.

For each $i\in [\ell]$, say that $Q_i$ is a directed path from $x_i$ to $y_i$. For each $i\in[\ell]$ in turn, extend $\phi$ to cover $V(Q_i)\setminus \{x_i,y_i\}$, by doing the following.
\begin{itemize}
    \item If $i\in I_1$, recall that $q_i,r_i$ are such that $Q_i$ is a directed path from $F_{q_i}$ to $F_{r_i}$ with length $3(r_i-q_i)+1$, where $q_i<r_i$, and note that $\phi(x_i)\in \phi(V(F_{q_i}))\subset \bar{V}_{q_i}$ and $\phi(y_i)\in \phi(V(F_{r_i}))\subset \bar{V}_{r_i}$. Embed $Q_i$ as a directed $\phi(x_i),\phi(y_i)$-path with length $3(r_i-q_i)+1$ and exactly three vertices in $U_{i',3}\setminus (V(R_{i'})\cup (\cup_{j\in [i-1]}\phi(V(Q_j))))$, for each $q_i\leq i'<r_i$. Note that this is possible, by Claim~\ref{clm:connectUi3}, as when we look for such a path we have $|\cup_{j\in [i-1]}\phi(V(Q_i))|\leq \ell\cdot (3r+2)\leq m$ as $\ell\leqslant2r$, $1/n\ll \mu\ll 1/r$ and $m\geq \mu n$.
    \item If $i\in I_2$, recall that $2\leq s_i\leq r$ is such that $Q_i$ is a directed path with length 3 from $V(F_{s_i}^-)\setminus V(S_{s_i})$ to the last vertex of $S_{s_i}$, and note that $\phi(x_i)\in \phi(V(F_{s_i}^-)\setminus V(S_{s_i}))\subset U_{s_i-1,4}$ and $\phi(y_i)\in \phi(V(S_{s_i}))\subset V_{s_i}$. Embed $Q_i$ as a directed path with length 3 from $\phi(x_i)$ to $\phi(y_i)$ with interior vertices in $(U_{s_i-1,4}\cup U_{s_i-1,5})\setminus (\phi(V(F_{s_i}^-))\cup (\cup_{j\in [i-1]}\phi(V(Q_j)))\cup V(R_{s_i-1}))$. Note that this possible, by Claim~\ref{clm:connectUi3short}, as when we look for such a path we have, by \ref{prop4}, $|\phi(V(F_{s_i}^-))|+|\cup_{j\in [i-1]}\phi(V(Q_j))|\leq m+\ell\cdot (3r+2)\leqslant2m$.
\end{itemize}

Finally, we extend $\phi$ to cover the internal vertices of $P_i$, for each $i\in [r]$. For each $i\in [r]$, let $U''_i=\bigl(V(R_i)\cup U_i\bigr)\setminus \phi\bigl(V(F^+_{i})\cup V(F^-_{i+1})\cup (\cup_{j\in [\ell]}V(Q_j))\bigr)$. Note that $V(R_i)\cup U_i$ contains exactly the vertices in $U_i$ and the endvertices of $R_i$. Therefore,
\begin{align*}
|U''_i|&=|U_i|+2-(|F^+_{i}|-|S_{i}|)-(|F^-_{i+1}|-|S_{i+1}|)-3|\{j\in I_1:q_j\leq i<r_j\}|-2|\{j\in I_2:s_j=i+1\}|\\
&\overset{\eqref{Uisize}}=(|P_i|+3a_i+2b_i)-3a_i-2b_i=|P_i|.
\end{align*}
By Claim~\ref{clm:Rismall}, for each $i\in [r]$, there is a directed path between the endvertices of $R_i$ with vertex set $U''_i$. Using these paths, for each $i\in [r]$, extend the embedding $\phi$ to cover $P_i$, for each $i\in [r]$. This completes the embedding $\phi$ of $D=P\cup S_1\cup S_{r+1}\cup (\cup_{i\in [r+1]} F_i)\cup (\cup_{i\in [\ell]}Q_i)$, and hence, by \ref{prop1}, $G$ contains a copy of $T$.
\end{proof}

\subsection{Proof of Theorem~\ref{thm:fewcst}}\label{subsec:final}
Given Lemmas~\ref{lem:gooddecomp} and~\ref{lem:embedgood}, it is now straight-forward to prove Theorem~\ref{thm:fewcst}.

\begin{proof}[Proof of Theorem~\ref{thm:fewcst}] Note that, due to the result of Thomason~\cite{THO} quoted in the introduction, we may assume that $k\geq 3$. 
Let $n_0$ and $\mu$ be such that $1/n_0\ll \mu\ll 1/k$. Let $T$ be a tree with $n\geq n_0$ vertices and $k$ leaves, and let $G$ be a tournament with $n+k-2$ vertices.

If there are no vertices $x$ and $y$ with degree 2 in $T$ and a bare $x,y$-path $P$ with length at least $\mu n$ with  first and last block of length 1, then, by Lemma~\ref{lem:gooddecomp}, $T$ has an $(r,m)$-good decomposition for some $m\geq \mu n$ and $r\leq 10k$. In this case, then, by Lemma~\ref{lem:embedgood}, $G$ contains a copy of $T$. Thus, we can assume that $T$ contains vertices $x$ and $y$ with degree 2 in $T$ and a bare $x,y$-path $P$ with length at least $\mu n$ with  first and last block of length 1. 

Suppose first, that $k=3$. Note that in this case $P$ must lie in a maximal bare path of $T$ with one endvertex that is a leaf. Say this leaf is $z$, and assume, by relabelling $x$ and $y$ if necessary, that the path, $Q$ say, from $x$ to $z$ in $T$ contains $y$ (and hence $P$). Let $T'=T-(V(Q)\setminus\{x\})$.
Noting that $x$ is a leaf of $T'$, duplicate $x$ to get the tree $T''$ with the new leaf $x'$. Note that $T''$ has $4$ leaves and $|T|-|Q|+2\leq n-\mu n+1$ vertices. Therefore, by Theorem~\ref{thm:fewlinear}, as $1/n\ll\mu,1/k$, $G$ contains a copy of $T''$, $S''$ say. Let $s$ and $s'$ be the copy of $x$ and $x'$ in $S''$ respectively. Note that $|G-(V(S'')\setminus\{s,s'\})|=n+1-(n-|Q|)=|Q|+1$. By Theorem~\ref{thm:appending_non-directed_path}, there is a copy, $Q'$ say, of $Q$ with $x$ embedded to $\{s,s'\}$. Then $S''\cup Q'$ gives a copy of $T$.

Therefore, we have that $k\geq4$. In this case, let $T'=T-(V(P)\setminus\{x,y\})$. Noting that $x$ and $y$ are leaves of $T'$, create $T''$ by duplicating $x$ and $y$ to get the new vertices $x'$ and $y'$ respectively, and adding the edge $xy$. Note that $T''$ has $k+2$ leaves and $|T|-|P|+4\leq n-\mu n+3$ vertices.
 Therefore, by Theorem~\ref{thm:fewlinear}, as $1/n\ll \mu, 1/k$, $G$ contains a copy of $T''$, $S''$ say. Let $s,s',t$ and $t'$ be the copy of $x,x',y$ and $y'$ in $S''$ respectively. Note that $|G-(V(S'')\setminus\{s,s',t,t'\})|=n+k-2-(n-|P|)=|P|+k-2\geq |P|+2$.
By Theorem~\ref{thm:connecting_non-directed_paths}, there is a copy, $P'$ say, of $P$ with $x$ embedded to $\{s,s'\}$ and $y$ embedded to $\{t,t'\}$. Observing that $S''\cup P'$ contains a copy $T$ completes the proof that $G$ contains a copy of $T$ in this case.
\end{proof}

\bibliographystyle{abbrv}
\bibliography{references}

\end{document}